\documentclass[11pt]{amsart}

\usepackage{amscd}
\usepackage{amsmath, amssymb}
\usepackage{amsfonts}
\newcommand{\de}{\partial}
\newcommand{\db}{\overline{\partial}}

\newcommand{\ddt}{\frac{\partial}{\partial t}}
\newcommand{\ddb}{\partial \ov{\partial}}

\newcommand{\ddbar}{\sqrt{-1} \partial \overline{\partial}}
\newcommand{\Ric}{\mathrm{Ric}}
\newcommand{\ov}[1]{\overline{#1}}
\newcommand{\mn}{\sqrt{-1}}

\newcommand{\tr}[2]{\textrm{tr}_{#1}{#2}}
\newcommand{\ti}[1]{\tilde{#1}}
\newcommand{\vp}{\varphi}

\newcommand{\ve}{\varepsilon}

\renewcommand{\leq}{\leqslant}
\renewcommand{\geq}{\geqslant}
\renewcommand{\le}{\leqslant}
\renewcommand{\ge}{\geqslant}

\begin{document}
\newcounter{remark}
\newcounter{theor}
\setcounter{remark}{0}
\setcounter{theor}{1}
\newtheorem{claim}{Claim}
\newtheorem{theorem}{Theorem}[section]
\newtheorem{lemma}[theorem]{Lemma}
\newtheorem{corollary}[theorem]{Corollary}
\newtheorem{proposition}[theorem]{Proposition}
\newtheorem{question}{question}[section]
\newtheorem{defn}{Definition}[theor]
\numberwithin{equation}{section}
\theoremstyle{definition}
\newtheorem{remark}[theorem]{Remark}

\newenvironment{example}[1][Example]{\addtocounter{remark}{1} \begin{trivlist}
\item[\hskip
\labelsep {\bfseries #1  \thesection.\theremark}]}{\end{trivlist}}
\title[Monge-Amp\`ere equation for $\MakeLowercase{(n-1)}$-PSH functions]{The Monge-Amp\`ere equation for $(n-1)$-plurisubharmonic functions on a compact K\"ahler manifold$^*$}

\author[V. Tosatti]{Valentino Tosatti}
\thanks{$^{*}$Research supported in part by NSF grants    DMS-1236969 and DMS-1105373.  The first named-author is supported in part by a Sloan Research Fellowship.}
\address{Department of Mathematics, Northwestern University, 2033 Sheridan Road, Evanston, IL 60208}
\author[B. Weinkove]{Ben Weinkove}
\dedicatory{Dedicated to Professor Duong H. Phong on the occasion of his 60th birthday}
\begin{abstract}   A $C^2$ function on $\mathbb{C}^n$ is called $(n-1)$-plurisubharmonic in the sense of Harvey-Lawson if the sum of any $n-1$ eigenvalues of its complex Hessian is nonnegative.  We show that the associated Monge-Amp\`ere equation  can be solved on any compact K\"ahler manifold.  As a consequence we prove the existence of solutions to an equation of Fu-Wang-Wu, giving Calabi-Yau theorems for balanced, Gauduchon and strongly Gauduchon metrics on compact K\"ahler manifolds.
 \end{abstract}

\maketitle

\section{Introduction}

A $C^2$ function $u$ on a domain $\Omega \subset \mathbb{C}^n$ is called \emph{plurisubharmonic} if the $n\times n$ Hermitian matrix  with $(i,j)$th entry equal to
$$\frac{\partial^2 u}{\partial z^i \partial \ov{z}^j}$$
is nonnegative definite. Equivalently, $\ddbar u$ is a nonnegative $(1,1)$ form.
Another equivalent definition is that $u$ is subharmonic when restricted to every complex line intersecting $\Omega$.
Harvey-Lawson \cite{HL2, HL} introduced a more general notion of \emph{$k$-plurisubharmonicity}, where the complex lines in this second definition are replaced by complex $k$-planes.

In this paper we are interested in the case of $(n-1)$-plurisubharmonic functions (for $n\geq 2$), $(n-1)$-PSH for short, which can be characterized as follows.  A $C^2$ function $u$ is  $(n-1)$-PSH if the matrix whose $(i,j)$th entry is
\begin{equation}\label{matrice}
\left( \sum_{k=1}^n \frac{\partial^2 u}{\partial z^k \partial \ov{z}^k} \right) \delta_{ij} -  \frac{\partial^2 u}{\partial z^i \partial \ov{z}^j},
\end{equation}
is nonnegative definite. Equivalently, $(\Delta u) \beta - \ddbar u$ is a nonnegative $(1,1)$ form where $\beta=\mn\sum_i dz^i\wedge d\ov{z}^i$ is the Euclidean K\"ahler form and $\Delta$ the associated Laplace operator on functions.  Other equivalent definitions are that
 $\ddbar u\wedge\beta^{n-2}$ is a nonnegative $(n-1,n-1)$ form, or
 that the sum of any $(n-1)$ eigenvalues of the complex Hessian $\frac{\partial^2 u}{\partial z^i \partial \ov{z}^j}$ is nonnegative.  $(n-1)$-PSH functions were also recently considered by Han-Ma-Wu \cite{HMW2}.

If the complex Hessian is replaced by the real Hessian, such functions are sometimes called ``$(n-1)$-convex'' (or with ``$(n-1)$-positive Hessian''), and were first studied by Sha \cite{Sh} and Wu \cite{Wu}.  For smooth and positive right hand side, the Dirichlet problem  for $(n-1)$-convex functions on suitable domains, was solved in the work of
Caffarelli-Nirenberg-Spruck \cite{CNS}, where they treated a general class of nonlinear equations.

Taking the determinant of the matrix in \eqref{matrice} defines a Monge-Amp\`ere equation for $(n-1)$-PSH functions:
\begin{equation} \label{manew}
 \det \left( \left( \sum_{k=1}^n \frac{\partial^2 u}{\partial z^k \partial \ov{z}^k} \right) \delta_{ij} -  \frac{\partial^2 u}{\partial z^i \partial \ov{z}^j} \right) =f.
\end{equation}
 In complex dimension 2, the equation (\ref{manew}) is equivalent to the usual complex Monge-Amp\`ere equation, so the results of  this paper are only new for dimension $n\ge 3$.  For some recent developments in the theory of the complex Monge-Amp\`ere equation, see \cite{PSS} and the references therein.

The  Dirichlet problem for (\ref{manew}) on suitable domains in $\mathbb{C}^n$ for positive $f$ was solved by Song-Ying Li \cite{Li} who, as in \cite{CNS}, considered a general class of operators.
Harvey-Lawson investigated  (\ref{manew})   explicitly (see e.g. \cite[Example 4.3.2]{HL3}) and solved the Dirichlet problem with $f=0$ on suitable domains.

In this paper we solve the corresponding Monge-Amp\`ere equation on compact K\"ahler manifolds.
Let $(M,g)$ be a compact K\"ahler manifold of complex dimension $n \ge 2$ and write $\omega = \sqrt{-1} g_{i\ov{j}} dz^i \wedge d\ov{z}^j$ for the associated K\"ahler form.   Write $\Delta = g^{i\ov{j}} \partial_i \partial_{\ov{j}}$ for the associated Laplace operator on functions.  We let $h$ be a  Hermitian metric on $M$ (for our applications it will in general be non-K\"ahler), and write $\omega_h$ for its associated real $(1,1)$ form.

Our main result is as follows:

\begin{theorem} \label{maintheorem} Let $(M,\omega)$ be an $n$-dimensional compact K\"ahler manifold, $n\geq 2$, and let $\omega_h$ be a Hermitian metric
and $F$ be a smooth function on $M$.  Then there exists a unique pair $(u,b)$ where $u$ is a smooth real function and $b$ is a constant, solving
\begin{equation} \label{MA1}
\left( \omega_h + \frac{1}{n-1} ( (\Delta u) \omega - \ddbar u) \right)^n = e^{F+b} \omega^n,
\end{equation}
with
\begin{equation} \label{condthm}
\omega_h + \frac{1}{n-1} (( \Delta u)\omega - \ddbar u) > 0, \quad \sup_M u=0.
\end{equation}
\end{theorem}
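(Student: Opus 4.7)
The plan is to solve the equation by the continuity method. Setting $F_t := tF + (1-t)\log(\omega_h^n/\omega^n)$ for $t \in [0,1]$, consider the family
\begin{equation*}
\bigl(\omega_h + \tfrac{1}{n-1}((\Delta u_t)\omega - \ddbar u_t)\bigr)^n = e^{F_t + b_t}\omega^n
\end{equation*}
subject to positivity of the form in parentheses and $\sup_M u_t = 0$. The pair $(u_0, b_0) = (0,0)$ solves the equation at $t=0$. Letting $T \subset [0,1]$ denote the set of $t$ admitting a $C^\infty$ solution, the aim is to show $T$ is open and closed, so that $T = [0,1]$ and the $t=1$ solution is the desired one. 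Uniqueness for the theorem as stated will follow from the maximum principle applied to the difference of two putative solutions.

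For openness, let $\tilde g_t$ denote the Hermitian metric associated to the positive $(1,1)$ form on the left at a solution. Linearization of $\log(\tilde\omega_t^n/\omega^n) - F_t - b_t$ in a direction $(\eta, c)$ gives $L\eta - c$, where
\begin{equation*}
L\eta := \tfrac{1}{n-1}\bigl((\tr{\tilde g_t}{\omega})\,\Delta \eta - \Delta_{\tilde g_t}\eta\bigr).
\end{equation*}
Diagonalizing $\tilde g_t$ with respect to $g$ with eigenvalues $\mu_1, \ldots, \mu_n > 0$, the principal symbol of $L$ equals $\tfrac{1}{n-1}\sum_k\bigl(\sum_{i\ne k}\mu_i^{-1}\bigr)|\xi_k|^2 > 0$, so $L$ is elliptic with no zeroth-order term; by the strong maximum principle its kernel on the compact manifold $M$ reduces to the constants. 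A standard Fredholm argument shows $(\eta, c) \mapsto L\eta - c$ is surjective onto $C^{k,\alpha}(M)$ with one-dimensional kernel spanned by $(\mathrm{const}, 0)$, whence the implicit function theorem gives openness after imposing the normalization.

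The bulk of the proof is closedness, i.e., $t$-uniform a priori estimates on $(u_t, b_t)$. To bound $b_t$, I would integrate the equation, expand $\tilde\omega_t^n$ as a polynomial in $(\Delta u_t)\omega - \ddbar u_t$, and use that $\omega$ is K\"ahler to integrate by parts, producing quantities controlled by $\|u_t\|_{L^\infty}$ and thereby a two-sided bound on $b_t$ in terms of that norm. A $C^0$ estimate on $u_t$ then comes either from a Moser iteration --- multiplying the equation by $|u_t|^{p-2}u_t$ and integrating by parts using K\"ahlerness of $\omega$, then applying the Sobolev inequality and iterating --- or from pluripotential methods after recasting the equation in terms of the positive $(n-1,n-1)$ form $\tilde\omega_t^{n-1}$.

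The main obstacle is the $C^2$ estimate. Applying the maximum principle to $\log(\tr{g}{\tilde\omega_t}) + Au_t$ for $A \gg 1$, two features new relative to Yau's original argument must be confronted: the torsion of the non-K\"ahler metric $\omega_h$ produces uncontrolled first-order error terms, and the presence of $\Delta u_t$ inside the equation generates additional higher-order derivatives of $u_t$ upon differentiation. Following the strategy developed by Tosatti-Weinkove for the Hermitian complex Monge-Amp\`ere equation, the auxiliary function and constants must be chosen so as to absorb these bad terms into the good positive quadratics in the second derivatives of $u_t$ arising from Cauchy-Schwarz. Once $C^0$ and $C^2$ estimates are secured, the equation becomes uniformly elliptic; concavity of the operator (expressed as $\log\det$ of a linear function of $u_{i\bar j}$) combined with Evans-Krylov theory yields a $C^{2,\alpha}$ estimate, and Schauder bootstrapping upgrades this to $C^\infty$, closing the continuity argument.
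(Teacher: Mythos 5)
Your skeleton (continuity method, openness via the linearized operator, a priori estimates, Evans--Krylov, bootstrapping) matches the paper's, and your openness argument is fine: $L$ is elliptic with no zeroth-order term, its kernel is the constants, and a Fredholm/adjoint-kernel argument (the paper implements this explicitly via a Gauduchon conformal factor) gives invertibility after normalization. But the analytic core of your plan has genuine gaps. First, the zeroth-order estimates: unlike $\omega+\ddbar u$, the form $(\Delta u)\omega-\ddbar u$ is \emph{not} closed (it is closed only for $u$ constant), so $\int_M \tilde{\omega}_t^n$ is not a cohomological quantity; your proposed bound on $b_t$ by integrating the equation and integrating by parts does not produce terms controlled by $\|u_t\|_{L^\infty}$ (the correct and easy route is the maximum principle at the extrema of $u_t$). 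For the same reason the routine Moser iteration modeled on Yau breaks down: the integration by parts leaves uncontrolled terms involving $\Delta u$ wedged against non-closed forms. The paper has to work for the $C^0$ bound: it first derives, \emph{using the equation itself} through an eigenvalue analysis, the pointwise inequality $\ddbar u\wedge(2\omega_0^{n-1}+\ddbar u\wedge\omega^{n-2})\le C\omega^n$, and from it a Cherrier-type inequality $\int_M|\partial e^{-pu/2}|_g^2\,\omega^n\le Cp\int_M e^{-pu}\omega^n$, which then feeds into the iteration scheme of the authors' earlier Hermitian work. Your fallback ``pluripotential methods'' is also unavailable: the paper points out it is not even clear how to define $((\Delta u)\beta-\ddbar u)^n$ for $(n-1)$-PSH functions in the pluripotential sense.

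Second, and more seriously, your $C^2$ step assumes that a Yau/Tosatti--Weinkove-style maximum principle applied to $\log \mathrm{tr}_{g}{\tilde{g}_t}+Au_t$ can be made to close, i.e.\ that $\mathrm{tr}_{\omega}{\tilde{\omega}_t}$ can be bounded directly in terms of $u_t$. The paper explicitly flags exactly this as an open problem for this equation (Remark 4.4): the obstruction is not the torsion of $\omega_h$ but the third-order terms generated by the operator $\Theta^{i\ov{j}}=\frac{1}{n-1}((\mathrm{tr}_{\tilde{g}}{g})g^{i\ov{j}}-\tilde{g}^{i\ov{j}})$, which nobody knows how to absorb in that scheme. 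What the paper proves instead is the gradient-dependent estimate $\Delta u\le C(\sup_M|\nabla u|_g^2+1)$ by a Hou--Ma--Wu-type argument with the auxiliary functions $\varphi(|\nabla u|^2_g)$, $\psi(u)$ and a two-case eigenvalue analysis. This leaves a first-order estimate to be supplied, which your proposal omits entirely; in the paper it occupies all of Section 5 and is obtained by a blow-up argument reducing to a new Liouville theorem for bounded, Lipschitz, maximal $(n-1)$-PSH functions on $\mathbb{C}^n$ (in the spirit of Dinew--Ko\l odziej), whose proof requires developing a comparison principle and a dimension-reduction lemma special to $(n-1)$-plurisubharmonicity. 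Without either a proof of the direct second-order bound you assert or a substitute for the gradient estimate, the chain of estimates in your proposal does not close.
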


We can restate this theorem in terms of $(n-1, n-1)$ forms.  The equation (\ref{MA1}) becomes the ``form-type'' Calabi-Yau equation of Fu-Wang-Wu \cite{FWW2, FWW}.

\begin{corollary} \label{cor1} Let $(M, \omega)$ be a compact K\"ahler manifold and
 $g_0$  a Hermitian metric on $M$ with associated $(1,1)$ form $\omega_0$.  Let $F$ be a smooth function.  Then there exists a unique pair $(u,b)$ where $u$ is a smooth function on $M$ and $b$ a constant, solving
\begin{equation} \label{FWW}
\det \left( \omega_0^{n-1} + \ddbar u \wedge \omega^{n-2} \right) = e^{F+b} \det (\omega^{n-1}),
\end{equation}
with
\begin{equation} \label{condcor1}
\omega_0^{n-1} + \ddbar u \wedge \omega^{n-2} >0, \quad \sup_M u =0.
\end{equation}
\end{corollary}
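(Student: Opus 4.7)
The plan is to show that Corollary~\ref{cor1} is a direct algebraic reformulation of Theorem~\ref{maintheorem}, via the K\"ahler Hodge star $*$ associated to $\omega$: this is a pointwise linear isomorphism between real $(1,1)$-forms and real $(n-1,n-1)$-forms. Starting from the standard identity
\[
*\alpha \;=\; \frac{(\textrm{tr}_\omega \alpha)\,\omega^{n-1}}{(n-1)!} \;-\; \frac{\alpha\wedge\omega^{n-2}}{(n-2)!}
\]
for any real $(1,1)$-form $\alpha$, together with $*\omega = \omega^{n-1}/(n-1)!$, one obtains by taking $\alpha = \ddbar u$ the clean duality
\[
(\Delta u)\omega - \ddbar u \;=\; \frac{1}{(n-2)!}\,*\!\bigl(\ddbar u \wedge \omega^{n-2}\bigr).
\]
This is the differential-form incarnation of the equivalence of the various definitions of $(n-1)$-plurisubharmonicity stated in the introduction.

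Given $\omega_0$ as in Corollary~\ref{cor1}, I would define the Hermitian metric $\omega_h$ of Theorem~\ref{maintheorem} by
\[
\omega_h \;:=\; \frac{1}{(n-1)!}\,*\omega_0^{n-1}.
\]
This is a positive $(1,1)$-form: at any point, choose an $\omega$-orthonormal frame diagonalizing $\omega_0$ with positive eigenvalues $\mu_1,\ldots,\mu_n$; then $\omega_h$ is diagonal in the same frame with eigenvalues $\prod_{l\neq i}\mu_l > 0$, so $\omega_h$ is a genuine Hermitian metric. Combining the two displays above yields the central identity
\[
\omega_h + \frac{1}{n-1}\bigl((\Delta u)\omega - \ddbar u\bigr) \;=\; \frac{1}{(n-1)!}\,*\!\bigl(\omega_0^{n-1} + \ddbar u \wedge \omega^{n-2}\bigr),
\]
which immediately shows that the positivity condition in (\ref{condthm}) is equivalent to the one in (\ref{condcor1}).

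To match the two Monge--Amp\`ere equations themselves, I would work pointwise in an $\omega$-orthonormal frame, in which the coefficient matrix of $*\Omega$ for an $(n-1,n-1)$-form $\Omega$ coincides with the coefficient matrix of $\Omega$. Taking $n$-th powers on both sides of the central identity above then shows that
\[
\frac{\bigl(\omega_h + \frac{1}{n-1}((\Delta u)\omega - \ddbar u)\bigr)^n}{\omega^n} \;=\; \frac{\det\bigl(\omega_0^{n-1} + \ddbar u \wedge \omega^{n-2}\bigr)}{\det(\omega^{n-1})},
\]
with the common factor $((n-1)!)^n$ cancelling between numerator and denominator on the right. Thus equations (\ref{MA1}) and (\ref{FWW}) are literally the same equation for the same $(u,b)$, and existence and uniqueness of the normalized pair in the corollary follow directly from Theorem~\ref{maintheorem} applied to this $\omega_h$. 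The main obstacle is the careful bookkeeping of factorial constants and sign conventions coming from the Hodge-star dictionary between $(1,1)$- and $(n-1,n-1)$-forms; once these are settled, the corollary is an immediate algebraic consequence of the theorem.
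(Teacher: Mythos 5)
Your proposal is correct and is essentially the paper's own argument in Section \ref{prelims}: you define $\omega_h = \frac{1}{(n-1)!}*\omega_0^{n-1}$ exactly as in \eqref{omegah}, your ``central identity'' is \eqref{corres}, and the transfer of the equation and of positivity is the computation based on \eqref{df2}, so Corollary \ref{cor1} follows from Theorem \ref{maintheorem} in the same way. The only cosmetic difference is that you derive the key formula from the general identity $*\alpha = \frac{(\mathrm{tr}_\omega\alpha)}{(n-1)!}\omega^{n-1}-\frac{1}{(n-2)!}\alpha\wedge\omega^{n-2}$ rather than by the direct orthonormal-frame computation, which is an equivalent bookkeeping of the same Hodge-star dictionary.
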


In fact, if $\omega_0$ and $\omega_h$ are related by $(n-1)! \, \omega_h = * \omega_0^{n-1}$, where $*$ is the Hodge star operator of $g$, then  (\ref{MA1}) and (\ref{FWW}) are completely equivalent. This is explained below in Section \ref{prelims}.

It was shown by Fu-Wang-Wu \cite{FWW} that (\ref{FWW}) (and hence also (\ref{MA1})) can be solved under the assumption that $\omega$ has nonnegative orthogonal bisectional curvature.  Hence the main content of this paper is to remove this assumption on curvature.  Corollary \ref{cor1} is implicitly conjectured in \cite{FWW2, FWW} and the expectation is that more generally the equation
\begin{equation} \label{eco}
\det \left( \omega_0^{n-1} + \ddbar ( u  \omega^{n-2}) \right) = e^{F+b} \det (\omega^{n-1}),
\end{equation}
has solutions for non-K\"ahler $\omega$.

Let us also mention another application of our results to the ``Strominger system''. This is a system of nonlinear PDEs on a compact complex manifold introduced by Strominger \cite{St}, which can be seen as a generalization of the Calabi-Yau equation to non-K\"ahler metrics. Nontrivial solutions of the Strominger system were first constructed by Li-Yau \cite{LY} on some K\"ahler manifolds, and by Fu-Yau \cite{FY} on certain non-K\"ahler manifolds.
As explained in \cite{FWW}, Corollary \ref{cor1} allows us to find non-K\"ahler solutions of a strengthening of one of the equations in the Strominger system on Calabi-Yau K\"ahler manifolds \cite[(2.5), (13.4)]{FY}, \cite{LY}, namely
\begin{equation} \label{strom}
d\left(|\Omega|_\omega \omega^{n-1}\right)=0,
\end{equation}
where $\Omega$ is a never-vanishing holomorphic $n$-form on $M$. Indeed, we can use Corollary \ref{cor1} (or Corollary \ref{corbal}) to produce a Hermitian metric $\omega$ with $d(\omega^{n-1})=0$ and $|\Omega|_\omega$ a constant, which implies in particular that \eqref{strom} is satisfied.

As another consequence of Corollary \ref{cor1} we obtain ``Calabi-Yau'' theorems on a compact K\"ahler manifold for balanced, Gauduchon and strongly Gauduchon metrics. Recall that a metric $\omega_0$ is \emph{balanced} \cite{Mi} (or semi-K\"ahler) if $d(\omega_0^{n-1})=0$ and \emph{Gauduchon} \cite{Ga} if $\partial \ov{\partial} (\omega_0^{n-1})=0$. More recently, Popovici \cite{Po2} introduced the notion of {\em strongly Gauduchon} metrics, for which $\db(\omega_0^{n-1})$ is $\de$-exact.

If $\omega_0$ is balanced then the $(n-1)$th root of $\omega_0^{n-1} + \ddbar u \wedge \omega^{n-2}$  is balanced, and the same holds for Gauduchon and strongly Gauduchon.  Hence (for more details, see Section \ref{prelims}):

\begin{corollary} \label{corbal}  Let $(M, \omega)$ be a compact K\"ahler manifold.
Let $\omega_0$ be a balanced (resp., Gauduchon, resp., strongly Gauduchon) metric on $M$ and $F'$ a smooth function on $M$.  Then there exists a unique constant $b'$ and a unique  balanced (resp., Gauduchon, resp., strongly Gauduchon) metric, which we write as $\omega_u$, with
\begin{equation} \label{mat}
\omega_u^{n-1} = \omega_0^{n-1} + \ddbar u \wedge \omega^{n-2},
\end{equation}
for some smooth function $u$, solving the Calabi-Yau equation
\begin{equation}\label{ma}
\omega_u^n = e^{F'+b'} \omega^n.
\end{equation}
\end{corollary}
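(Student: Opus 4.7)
The plan is to deduce Corollary \ref{corbal} as a direct consequence of Corollary \ref{cor1} together with pointwise linear algebra on $(n-1,n-1)$ forms. Given the smooth function $F'$, I first set $F := (n-1)F'$ and apply Corollary \ref{cor1} with this $F$ and with $\omega_0$ the $(1,1)$ form of the given balanced (resp.\ Gauduchon, strongly Gauduchon) metric, producing a unique smooth $u$ with $\sup_M u=0$ and a unique constant $b$ satisfying \eqref{FWW} and \eqref{condcor1}. The $(n-1,n-1)$ form $\Phi := \omega_0^{n-1}+\ddbar u\wedge\omega^{n-2}$ is then smooth and strictly positive, so by Michelsohn's theorem (the pointwise bijection $\psi\mapsto \psi^{n-1}$ between positive $(1,1)$ and positive $(n-1,n-1)$ forms) there is a unique smooth positive $(1,1)$ form $\omega_u$ with $\omega_u^{n-1}=\Phi$, namely \eqref{mat}.

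To recover the Calabi--Yau equation \eqref{ma}, diagonalize $\omega_u$ with respect to $\omega$ with eigenvalues $\lambda_1,\dots,\lambda_n$. Then $\omega_u^n/\omega^n=\lambda_1\cdots\lambda_n$, while the eigenvalues of $\omega_u^{n-1}$ on the $(n-1,n-1)$-basis associated to $\omega$ are $\prod_{j\ne i}\lambda_j$, so the $\det$ appearing in \eqref{FWW} satisfies $\det(\omega_u^{n-1})/\det(\omega^{n-1}) = \prod_i\prod_{j\ne i}\lambda_j = (\lambda_1\cdots\lambda_n)^{n-1}$. Combined with \eqref{FWW} this yields $\omega_u^n = e^{(F+b)/(n-1)}\omega^n = e^{F'+b'}\omega^n$ with $b':= b/(n-1)$, which is exactly \eqref{ma}.

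It remains to show that $\omega_u$ inherits the balanced (resp.\ Gauduchon, strongly Gauduchon) property from $\omega_0$; this is the only step where the K\"ahler hypothesis on $\omega$ is used. Since $d\omega^{n-2}=0$ and $d\ddbar=0$, a direct computation gives
\begin{equation*}
d(\ddbar u\wedge\omega^{n-2})=0,\quad \ddbar(\ddbar u\wedge\omega^{n-2})=0,\quad \db(\ddbar u\wedge\omega^{n-2})=0.
\end{equation*}
Consequently $d\omega_u^{n-1}=d\omega_0^{n-1}$, $\ddbar\omega_u^{n-1}=\ddbar\omega_0^{n-1}$, and $\db\omega_u^{n-1}-\db\omega_0^{n-1}=0$ (which is in particular $\partial$-exact), so the three properties pass from $\omega_0$ to $\omega_u$. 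Uniqueness of $(\omega_u,b')$ follows from that of $(u,b)$ in Corollary \ref{cor1} together with injectivity of the Michelsohn correspondence. I expect no real obstacle here: this corollary is an algebraic/cohomological repackaging of Corollary \ref{cor1}, with Michelsohn's theorem furnishing the dictionary between $(n-1,n-1)$ forms and $(1,1)$ forms.
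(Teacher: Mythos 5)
Your proposal is correct and follows essentially the same route as the paper: apply Corollary \ref{cor1} with $F=(n-1)F'$, take the Michelsohn $(n-1)$th root of the positive form $\omega_0^{n-1}+\ddbar u\wedge\omega^{n-2}$, use the pointwise identity $\det(\omega_u^{n-1})=(\omega_u^n/\omega^n)^{n-1}\det(\omega^{n-1})$ (the paper's (\ref{detf})) to get (\ref{ma}) with $b'=b/(n-1)$, and observe that $\ddbar u\wedge\omega^{n-2}$ is $d$-closed and $\partial\ov{\partial}$-closed since $\omega$ is K\"ahler, so the balanced/Gauduchon/strongly Gauduchon conditions pass to $\omega_u$. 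Your eigenvalue derivation of the determinant identity and the explicit closedness computations just spell out steps the paper states in one line, so there is nothing to add.
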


In particular this answers affirmatively an explicit conjecture of Fu-Xiao \cite[p.3]{FX} and a question of Popovici \cite[Question 1.4]{Po} in the case of K\"ahler manifolds.  Popovici conjectures an analogous statement for balanced non-K\"ahler manifolds.  As in the approach of Fu-Wang-Wu \cite{FWW2, FWW}
one can
replace (\ref{mat}) by
$$\omega_u^{n-1} = \omega_0^{n-1} + \ddbar (u \omega^{n-2}),$$
and ask if Corollary \ref{corbal} holds for $\omega$ Hermitian.

  The authors are grateful to J.-P. Demailly for drawing their attention to the equations  (\ref{FWW}), (\ref{ma}) and their connection to Popovici's  strongly Gauduchon metrics.

Finally, as in the case of the Calabi-Yau theorem \cite{Ya}, we can interpret \eqref{ma} as a prescribed Ricci curvature equation for balanced metrics.
Recall that for a Hermitian metric $\omega$, we can define its first Chern form locally by $\Ric^{\mathrm{C}}(\omega)=-\ddbar \log\det (g)$,
which is a global closed real $(1,1)$ form cohomologous to $c_1(M)$, and its components equal one of the Ricci curvatures of the Chern connection of $\omega$.

Following \cite{Bi}, we consider the Bismut connection of $\omega$, and define its Ricci form $\Ric^{\mathrm{B}}(\omega)$. Then (see e.g. \cite[(2)]{FG}) we have the relation
$$\Ric^{\mathrm{B}}(\omega)=\Ric^{\mathrm{C}}(\omega)+dd^*\omega.$$
In particular, if $\omega$ is balanced, then $$d^*\omega=- *d*\omega= - \frac{1}{(n-1)!} *d(\omega^{n-1})=0,$$
and so the Chern and Bismut Ricci curvatures agree in this case. Now taking $\ddb\log$ of \eqref{ma} gives
$$\Ric^{\mathrm{C}}(\omega_u)=\Ric^{\mathrm{C}}(\omega)-\ddbar F',$$ and we conclude the following:
\begin{corollary}
Let $(M,\omega)$ be a compact K\"ahler manifold and $\omega_0$ be a balanced metric on $M$.
If $\psi$ is a closed real $(1,1)$ form cohomologous to $c_1(M)$, then there exists a unique balanced metric $\omega_u$ on $M$ with
$$\omega_u^{n-1} = \omega_0^{n-1} + \ddbar u \wedge \omega^{n-2},$$
for some smooth function $u$, and solving
$$\Ric^{\mathrm{B}}(\omega_u)=\Ric^{\mathrm{C}}(\omega_u)=\psi.$$
\end{corollary}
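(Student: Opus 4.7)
The plan is to reduce the statement to Corollary \ref{corbal} by means of the $\ddbar$-lemma. Since both $\Ric^{\mathrm{C}}(\omega)$ and the prescribed form $\psi$ represent $c_1(M)$ in $H^2(M,\mathbb{R})$, their difference is a $d$-exact smooth real $(1,1)$ form, and the $\ddbar$-lemma on the compact K\"ahler manifold $M$ then yields a smooth real function $F'$, unique up to an additive constant, satisfying
$$\Ric^{\mathrm{C}}(\omega)-\psi=\ddbar F'.$$

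With this $F'$, I would invoke Corollary \ref{corbal} applied to the balanced metric $\omega_0$, obtaining a constant $b'$, a smooth function $u$, and a balanced metric $\omega_u$ with $\omega_u^{n-1}=\omega_0^{n-1}+\ddbar u\wedge\omega^{n-2}$ and $\omega_u^n=e^{F'+b'}\omega^n$. The relation $\Ric^{\mathrm{C}}(\omega_u)=\Ric^{\mathrm{C}}(\omega)-\ddbar F'$, derived in the excerpt by taking $-\ddbar\log$ of the Calabi--Yau equation, immediately gives $\Ric^{\mathrm{C}}(\omega_u)=\psi$. For the Bismut Ricci form, the identity $\Ric^{\mathrm{B}}(\omega_u)=\Ric^{\mathrm{C}}(\omega_u)+dd^*\omega_u$ recalled in the excerpt, combined with the vanishing $d^*\omega_u=-\frac{1}{(n-1)!}\!*\!d(\omega_u^{n-1})=0$ that holds for any balanced metric, then yields $\Ric^{\mathrm{B}}(\omega_u)=\Ric^{\mathrm{C}}(\omega_u)=\psi$.

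For uniqueness of $\omega_u$, suppose $\tilde\omega_u$ is any balanced metric with $\tilde\omega_u^{n-1}=\omega_0^{n-1}+\ddbar\tilde u\wedge\omega^{n-2}$ and $\Ric^{\mathrm{C}}(\tilde\omega_u)=\psi$. Then
$$-\ddbar\log\frac{\tilde\omega_u^n}{\omega^n}=\psi-\Ric^{\mathrm{C}}(\omega)=-\ddbar F',$$
so $\tilde\omega_u^n=e^{F'+\tilde b}\omega^n$ for some constant $\tilde b$, and the uniqueness clause in Corollary \ref{corbal} (together with the freedom to absorb $\tilde b-b'$ into the function $F'$ by shifting $u$) forces $\tilde\omega_u=\omega_u$. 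There is no substantive analytic obstacle here: all of the hard work has been absorbed into the proof of Theorem \ref{maintheorem} and its corollaries, and the present statement is just the standard translation of a Calabi--Yau-type existence theorem into a prescribed-Ricci statement, in exact analogy with Yau's \cite{Ya} original deduction in the K\"ahler setting.
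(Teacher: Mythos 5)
Your proposal is correct and follows essentially the same route as the paper, which proves this corollary implicitly in the preceding paragraph: choose $F'$ by the $\ddbar$-lemma so that $\Ric^{\mathrm{C}}(\omega)-\psi=\ddbar F'$, apply Corollary \ref{corbal}, use $\Ric^{\mathrm{C}}(\omega_u)=\Ric^{\mathrm{C}}(\omega)-\ddbar F'$ from taking $\ddbar\log$ of \eqref{ma}, and note that $d^*\omega_u=0$ for balanced metrics makes the Bismut and Chern Ricci forms agree. Your uniqueness argument (any competitor satisfies the same Calabi--Yau equation up to a constant, so the uniqueness clause of Corollary \ref{corbal} applies) is the natural completion and matches the paper's intent; the parenthetical about absorbing $\tilde b-b'$ into $F'$ is unnecessary, since uniqueness of the pair $(b',\omega_u)$ in Corollary \ref{corbal} already forces $\tilde b=b'$ and $\tilde\omega_u=\omega_u$.
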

It is easy to see that every compact K\"ahler manifold of dimension $n\geq 3$ has a non-K\"ahler balanced metric.
Therefore, if $c_1(M)=0$ in $H^2(M,\mathbb{R})$, i.e. $M$ is Calabi-Yau and K\"ahler (of dimension $n\geq 3$), then it follows that there exist many non-K\"ahler balanced metrics on $M$ with vanishing Chern and Bismut Ricci curvatures. This means that the restricted holonomy of the Chern and Bismut connections are both contained in $\textrm{SU}(n)$, and such ``balanced Calabi-Yau metrics with torsion'' have received much attention in the mathematical physics literature, see e.g. \cite{FG, Fr, Fu, FLY, FX, FY, Gr, GIP}.

In a separate development, there has also been recent interest in balanced metrics
\cite{To, FX} (see also \cite{BDPP}) in relation to the  cone they define in $H^{n-1,n-1}(M, \mathbb{R})$.

We now briefly  outline the proof of Theorem \ref{maintheorem}.   Rather than following the method of proof of the complex Monge-Amp\`ere equation
$$(\omega+\ddbar u)^n = e^{F+b} \omega^n,$$
which was solved by Yau \cite{Ya} when $\omega$ is K\"ahler and by  the authors \cite{TW2} for $\omega$ Hermitian (see also Cherrier \cite{Ch} and Guan-Li \cite{GL}), we instead take an approach used for  the complex Hessian equations. The  existence of a solution to the complex Hessian equations
\begin{equation} \label{CH}
(\omega+ \ddbar u)^k \wedge \omega^{n-k} = e^F \omega^n, \textrm{ for fixed } 2 \le k \le n-1,
\end{equation}
was proved by Dinew-Ko{\l}odziej \cite{DK}, who used the second order estimate of Hou-Ma-Wu \cite{HMW} (see also \cite{Bl, DK2, Hou, J, K, Li, Lu, N, Zh} for earlier and related work on these equations).  In our discussion, we point out some similarities and differences between their arguments and ours. One difference is that equation (\ref{MA1}) has two reference metrics - one K\"ahler and one Hermitian - as opposed to the single reference K\"ahler metric $\omega$ in (\ref{CH}).  Another is that the $(k,k)$ forms $(\omega+ \ddbar u)^k$ are closed if $\omega$ is closed, whereas the (1,1) form $(\Delta u) \omega - \ddbar u$ is only closed if $u$ is constant.

 As a first step, in Section \ref{linfty}, we establish an $L^{\infty}$ estimate for the function $u$.  In contrast to the case of the complex Hessian equations,  the $L^{\infty}$ estimate appears to require substantial work beyond the Yau argument \cite{Ya}.  A key ingredient is the
 pointwise estimate
 $$\ddbar u \wedge (2\omega_0^{n-1} + \ddbar u \wedge \omega^{n-2} ) \le C \omega^n,$$
 for $\omega_0$ defined by $\omega_0^{n-1} = (n-1)! * \omega_h$.  This inequality makes crucial use of the Monge-Amp\`ere equation (\ref{MA1}).
We use this to prove a ``Cherrier-type'' inequality
$$\int_M | \partial e^{-\frac{pu}{2}}|^2_g \omega^n \le C p \int_M e^{-pu} \omega^n, \quad \textrm{for } p\ge 1,$$
and then apply arguments of \cite{TW2} to obtain the desired $L^{\infty}$ estimate.

Next, in Section \ref{soe}, following the Hou-Ma-Wu \cite{HMW} estimate for the complex Hessian equations, we show that
\begin{equation} \label{hmw1}
\Delta u \le C (\sup_M | \nabla u|^2_g +1).
\end{equation}
This estimate uses a maximum principle argument similar to that in \cite{HMW}, and indeed the key quantity we differentiate is almost identical.  However the reader will find a number of differences in the two computations.  In particular, we do not need the inequalities for elementary symmetric polynomials, such as those of Chou-Wang \cite{CW} (see also \cite{TrW}), used in \cite{HMW}.

It was pointed out by Hou-Ma-Wu \cite{HMW} that the inequality (\ref{hmw1}) is compatible with a blow-up type argument, and indeed this was exactly the approach taken later by Dinew-Ko{\l}odziej \cite{DK}.  A  Liouville Theorem was proved in \cite{DK} for solutions of the homogeneous complex Hessian equations on $\mathbb{C}^n$ which, with (\ref{hmw1}), gives a uniform bound on $|\nabla u|^2$.

We apply the same strategy, and in, Section \ref{foe}, obtain the following Liouville Theorem:  if $u :\mathbb{C}^n \rightarrow \mathbb{R}$ is $(n-1)$-PSH and solves the corresponding homogeneous Monge-Amp\`ere equation, then with suitable regularity assumptions, it must be constant.  Some of the properties of the operator
$$u \mapsto (\ddbar u)^k \wedge \omega^{n-k}$$
do not seem to carry over immediately here, and so we need to develop the necessary theory to push through the Dinew-Ko{\l}odziej argument in our case. For example, as opposed to the case of plurisubharmonic functions \cite{BT}, it is not clear in general how to define the Monge-Amp\`ere operator
$$((\Delta u) \beta- \ddbar u)^n,$$
 for $(n-1)$-PSH functions in $\mathbb{C}^n, n\geq 3$, in the sense of pluripotential theory.

Combining the above gives uniform zero, first and second order estimates for $u$.  Given these bounds, the higher order estimates and continuity method of Fu-Wang-Wu \cite{FWW2, FWW} for the equation (\ref{FWW}) are already enough to establish Theorem \ref{maintheorem}.  However, for the sake of completeness, we provide our own proofs of these last  steps in Section \ref{sectionproof}, which follow in the spirit of the rest of this paper.

    We end the introduction with a remark:

 \begin{remark}  It may be interesting to consider the parabolic version of equation (\ref{MA1}):
\begin{equation} \label{pma}
\ddt{} u = \log \frac{\left( \omega_h + \frac{1}{n-1} ( (\Delta u) \omega - \ddbar u) \right)^n}{\Omega},
\end{equation}
for a fixed volume form $\Omega$.  It is reasonable to conjecture that, analogous to results of H.-D. Cao \cite{Cao} and Gill \cite{Gi} on  parabolic complex Monge-Amp\`ere equations, solutions to (\ref{pma}) exist for all time and converge (after a suitable normalization) to give solutions to (\ref{MA1}). More generally, allowing $\omega_h$ to vary in time, one can consider the evolution equation of $(1,1)$ forms $\omega_t$ given by\begin{equation} \label{flow}
\ddt{} \omega_t^{n-1} = - (n-1) \textrm{Ric}^{\textrm{C}}(\omega_t) \wedge \omega^{n-2}.
\end{equation}
On a K\"ahler manifold $(M, \omega)$,  this flow preserves the balanced, Gauduchon and strongly Gauduchon conditions.  In the case $n=2$, the flow (\ref{flow}) coincides with the Chern-Ricci flow  (see \cite{Gi, TW3, TW4, SW, TW5}) and the K\"ahler-Ricci flow if the initial metric $\omega_0$ is K\"ahler.  By considering the parabolic analogue of (\ref{eco}), one could also hope to extend these ideas to the case when $\omega$ is non-K\"ahler.
\end{remark}

 \bigskip
 \noindent
 {\bf Acknowledgements.} \  The authors thank L. Caffarelli, M. P\u{a}un, J. Song and N.S. Trudinger for some helpful conversations. The second-named author thanks J.-P. Demailly for an informative discussion which ultimately led the authors to investigate these equations.  In addition, the authors are grateful to S.-T. Yau for numerous discussions about balanced metrics over the last several years.

The authors wish to express their deep gratitude to  Professor Duong H. Phong for his unwavering support, encouragement and guidance.  His perspective and  insight continues to be a driving influence in their work.

\section{Preliminaries and Notation} \label{prelims}

We first discuss some notation.   Let $(M,g)$ be K\"ahler, as in the introduction.  Given a real $(n-1, n-1)$ form $\psi$ on $M$, we say that $\psi$ is positive definite (see for example \cite{De}) if
$$\psi \wedge \mn\gamma \wedge \ov{\gamma} \ge 0, \ \textrm{for all  (1,0)-forms $\gamma$}, $$
with equality if and only if $\gamma=0$.

We define the determinant of a real $(n-1, n-1)$ form $\psi$ to be the determinant of the matrix $\Psi_{i\ov{j}}$ given by
\begin{equation*}
\begin{split}
\psi = {} &  (\sqrt{-1})^{n-1} (n-1)! \sum_{i,j} (\textrm{sgn}(i,j)) \Psi_{i\ov{j}} dz^1 \wedge d\ov{z}^1 \wedge \cdots \wedge \widehat{dz^i} \wedge d\ov{z}^i \wedge \cdots \\
& \wedge dz^j \wedge \widehat{d\ov{z}^j} \wedge \cdots \wedge dz^n \wedge d\ov{z}^n.
\end{split}
\end{equation*}
where $\textrm{sgn}(i,j) = -1$ if $i >j$ and is equal to $1$ otherwise (cf. \cite{FWW2, FWW}).  In particular,
\begin{equation} \label{detf}
\det (\omega^{n-1}) = (\det g)^{n-1}.
\end{equation}
Given a positive definite $(n-1, n-1)$ form $\psi$, we can define its $(n-1)$th root to be the unique Hermitian metric $\omega_0$ satisfying $\omega_0^{n-1} = \psi$, see e.g. \cite{Mi}.

On the other hand, the Hodge star operator $*$ with respect to $g$ takes any $(n-1, n-1)$ form (not necessarily positive definite) to a $(1,1)$ form, and vice versa.
With the conventions of, for example \cite{MK}, $*$ is defined as follows.  If $\psi$ is a $(p,q)$-form then we define $*\psi$ to be the $(n-q, n-p)$ form satisfying the properties
\begin{enumerate}
\item[(i)] $\displaystyle{ \varphi \wedge * \ov{\psi} = \langle \varphi, \psi \rangle_g \frac{\omega^n}{n!}}$ for all $(p,q)$ forms $\varphi$.
\item[(ii)] $\displaystyle{\ov{*\psi} = * \ov{\psi}}$.
\item[(iii)] $\displaystyle{**\psi = (-1)^{p+q} \psi}$.
\end{enumerate}
Here, the pointwise inner product $\langle \ , \ \rangle_g$ is defined by
$$ \langle \varphi, \psi \rangle_g = \frac{1}{p!q!} \sum g^{\alpha_1 \ov{\lambda}_1} \cdots g^{\alpha_p \ov{\lambda}_p} g^{\mu_1 \ov{\beta}_1} \cdots g^{\mu_q \ov{\beta}_q} \varphi_{\alpha_1\cdots \alpha_p \ov{\beta}_1 \cdots \ov{\beta}_q} \ov{\psi_{\lambda_1 \cdots \lambda_p \ov{\mu}_1 \cdots \ov{\mu}_q}},$$
where the sum is taken over all repeated indices.
We are writing
 $$\varphi = \frac{1}{p!q!} \sum \varphi_{\alpha_1 \cdots \alpha_p \ov{\beta}_1 \cdots \ov{\beta}_q} dz^{\alpha_1} \wedge \cdots \wedge dz^{\alpha_p} \wedge d\ov{z}^{\beta_1} \wedge \cdots \wedge d\ov{z}^{\beta_q}$$
 and
  $$\psi = \frac{1}{p!q!} \sum \psi_{\lambda_1 \cdots \lambda_p \ov{\mu}_1 \cdots \ov{\mu}_q} dz^{\lambda_1} \wedge \cdots \wedge dz^{\lambda_p} \wedge d\ov{z}^{\mu_1} \wedge \cdots \wedge d\ov{z}^{\mu_q}.$$

We now write down a formula for $*$ in two special cases, which are of most interest to us.  Fix a point on the manifold and suppose that we have chosen complex coordinates $z^1, \ldots, z^n$ in which $g_{i\ov{j}}$ is the identity matrix.  Write
$$e_i = \sqrt{-1} dz^i \wedge d\ov{z}^i, \quad i=1, \ldots, n.$$
 Then for $i=1, \ldots, n$,
 \[
\begin{split}
* e_i = {} & e_1 \wedge \cdots \wedge \widehat{e_i} \wedge \cdots \wedge e_n
\end{split}
\]
and
\[
\begin{split}
* (e_1 \wedge \cdots \wedge \widehat{e_i} \wedge \cdots \wedge e_n) ={} &  e_i,
\end{split}
\]
where we use the symbol \, $\widehat{ \ }\ $ to mean ``omit''. In particular, we see that $*$ maps positive definite real $(1,1)$ forms to positive definite real $(n-1,n-1)$ forms, and vice versa.   Also, observe that if $\chi$ is a real $(1, 1)$ form then
\begin{equation} \label{df2}
\frac{\chi^n}{\omega^n} = \frac{ \det (*\chi)}{ \det (* \omega)}.
\end{equation}

A key formula we need, which can be easily computed using the coordinates given above, is
\begin{equation}
\frac{1}{(n-1)!} * \left( \ddbar u \wedge \omega^{n-2} \right) = \frac{1}{n-1} \left( (\Delta u)\omega - \ddbar u \right),
\end{equation}
for any function $u$. Hence
\begin{equation} \label{corres}
* \frac{1}{(n-1)!}\left( \omega_0^{n-1} + \ddbar u \wedge \omega^{n-2}\right) = \left( \omega_h + \frac{1}{n-1} \left( (\Delta u)\omega - \ddbar u \right) \right)
\end{equation}
where $\omega_h$ is the Hermitian metric defined by
\begin{equation} \label{omegah}
 \omega_h = \frac{1}{(n-1)!}* \omega_0^{n-1}.
\end{equation}
Note that given $\omega_h$ there exists a unique $\omega_0$ solving (\ref{omegah}).

Given (\ref{corres}), the result of Theorem \ref{maintheorem} immediately implies Corollary \ref{cor1}.  Indeed $u$ satisfies the condition (\ref{condthm}) if and only if $u$ satisfies (\ref{condcor1}).  And, using (\ref{df2}),  we see that
\begin{equation}
\begin{split}
\lefteqn{\frac{\left(   \omega_h + \frac{1}{n-1} ((\Delta u)\omega - \ddbar u) \right)^n }{\omega^n}} \\
 = {} & \frac{\det \left( * \left( \omega_h + \frac{1}{n-1} ((\Delta u)\omega - \ddbar u) \right) \right)}{\det (*\omega)} \\
 = {} & \frac{ \det \left(   \omega_0^{n-1} + \ddbar u \wedge \omega^{n-2} \right)}{ \det \left(  \omega^{n-1} \right)},
\end{split}
\end{equation}
so that (\ref{MA1}) holds if and only if (\ref{FWW}) holds.

We now briefly justify Corollary \ref{corbal} using an observation of Fu-Wang-Wu \cite{FWW}.  Let $(u,b)$ solve (\ref{FWW}) with $F = (n-1)F'$.  Then using (\ref{detf}), $\omega_u$ defined by  (\ref{mat}) satisfies
$$e^{F+b}  = \frac{\det (\omega_u^{n-1})}{\det (\omega^{n-1})} = \left( \frac{\omega_u^n}{ \omega^n} \right)^{n-1},$$
and it follows that
$$\omega_u^n = e^{F'+b'} \omega^n,$$
with $b' = b/(n-1)$, which is exactly (\ref{ma}).

We end this section with some final words about notation.  Given any two Hermitian metrics $g$ and $g'$ with associated real $(1,1)$ forms $\omega$ and $\omega'$, we write
$$\tr{\omega}{\omega'} = \tr{g}{g'} = g^{i\ov{j}} g'_{i\ov{j}}.$$
We use letters $C, C'$ etc. to denote uniform constants which depend only on the allowed data, which will be clear from the context.  These constants may differ from line to line.

\section{$L^{\infty}$ estimate}\label{linfty}

In this section we work in the setup of Theorem \ref{maintheorem}.  $(M, \omega)$ is a compact K\"ahler manifold with $\omega_h$ a Hermitian metric.  We obtain an a priori  uniform $L^{\infty}$ estimate for $u$ solving (\ref{MA1}), depending only on the norm $\sup_M |F|$ and the fixed metrics.

 \begin{theorem} \label{theoremC0}  Let $u$ be as in the statement of Theorem \ref{maintheorem}.  Then there exists a constant $C$ depending only on the fixed data $(M, \omega, \omega_h)$ and $\sup_M |F|$ such that
 $$\| u \|_{L^{\infty}}:= \sup_M |u| \le C.$$
 \end{theorem}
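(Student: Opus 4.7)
The plan is to follow the three-step strategy indicated in the introduction: a pointwise algebraic inequality, a Cherrier-type integral inequality, and a Moser iteration in the style of \cite{TW2}. Throughout, I would pass to the equivalent formulation (\ref{FWW}) via the correspondence (\ref{corres}): let $\omega_u$ denote the Hermitian metric determined by $\omega_u^{n-1} = \omega_0^{n-1} + \ddbar u \wedge \omega^{n-2}$, so that the equation becomes $\omega_u^n = e^{F'+b'}\omega^n$ with $F' = F/(n-1)$, giving the two-sided bound $C^{-1}\omega^n \le \omega_u^n \le C\omega^n$. Also, taking the trace of (\ref{MA1}) with respect to $\omega$ yields $\Delta u \ge -\tr{\omega}{\omega_h} \ge -C$, a Laplacian lower bound that I will use at the end. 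Since $\sup_M u = 0$, the goal is to bound $\sup_M e^{-u/2}$ from above.

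The first step is to establish the pointwise inequality $\ddbar u\wedge(2\omega_0^{n-1}+\ddbar u\wedge\omega^{n-2})\le C\omega^n$. Using the identity $\ddbar u\wedge\omega^{n-2}=\omega_u^{n-1}-\omega_0^{n-1}$, the left-hand side equals $\ddbar u\wedge(\omega_u^{n-1}+\omega_0^{n-1})$. Working at a point, I would simultaneously diagonalize $\omega$ and $\omega_u$ and use the eigenvalue bound $\prod_i\mu_i\le C$ provided by the PDE, combined with AM-GM and Cauchy-Schwarz estimates relating the eigenvalues $\mu_i$ of $\omega_u$ to those of the fixed form $\omega_0$, to control each of the mixed terms $\ddbar u\wedge\omega_u^{n-1}$ and $\ddbar u\wedge\omega_0^{n-1}$ by a constant multiple of $\omega^n$. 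It is essential that the two positive $(n-1,n-1)$-forms appear summed rather than separately, since $\ddbar u$ is not signed.

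The second step is to derive the Cherrier-type inequality
\begin{equation*}
\int_M |\partial e^{-pu/2}|^2_g\,\omega^n \le Cp\int_M e^{-pu}\,\omega^n, \qquad p\ge 1.
\end{equation*}
Setting $v:=e^{-pu/2}\ge 1$, I would integrate $\ddbar(v^2)\wedge(2\omega_0^{n-1}+\ddbar u\wedge\omega^{n-2})$ over $M$ and integrate by parts. Expanding $\ddbar(v^2)=2v\,\ddbar v+2\,\de v\wedge\dbar v$, the term involving $\de v\wedge\dbar v$ wedged against the positive $(n-1,n-1)$-form $\omega_u^{n-1}+\omega_0^{n-1}$ dominates a constant multiple of $\int_M|\de v|^2_g\,\omega^n$. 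The term $v\,\ddbar v\wedge(\omega_u^{n-1}+\omega_0^{n-1})$, after integration by parts and absorbing error contributions generated by $d\omega_0^{n-1}\neq 0$ through Young's inequality, reduces to a multiple of $\int_M v^2\,\ddbar u\wedge(\omega_u^{n-1}+\omega_0^{n-1})$, which is controlled by Step 1 as $C\int_M e^{-pu}\omega^n$. The factor of $p$ on the right arises from the chain rule $\de v=-\tfrac{p}{2}v\,\de u$.

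The final step combines the Cherrier-type inequality with the Sobolev inequality on $(M,\omega)$ and Moser iteration to obtain $\sup_M e^{-u/2}\le C\|e^{-u/2}\|_{L^{p_0}(\omega^n)}$ for any fixed $p_0>0$. To close the argument I would use the Laplacian lower bound $\Delta u\ge -C$, the Green's function of $\Delta$ on $(M,\omega)$ and the normalization $\sup_M u=0$ to control $\int_M(-u)\omega^n$, and then upgrade this $L^1$-bound to an $L^{p_0}$-bound via a further Moser iteration using the Cherrier inequality itself, exactly as in \cite{TW2}. The main obstacle will be Step 1: the form $\ddbar u\wedge\omega^{n-2}$ is not a priori signed, so the pointwise inequality cannot come from positivity alone and must genuinely invoke the volume-form bound on $\omega_u^n$ coming from the Monge-Amp\`ere equation; this in turn is what permits the Hermitian-style integration by parts of Step 2 to absorb the error terms generated by the non-closedness of $\omega_0^{n-1}$.
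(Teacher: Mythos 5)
Your overall architecture (pointwise inequality, Cherrier-type inequality, then the $L^1$/measure argument of \cite{TW2}) matches the paper, and your Step 2 and the closing step are essentially the paper's Lemma \ref{Ch} and its proof of Theorem \ref{theoremC0}. The genuine gap is in Step 1, which is the heart of the matter. Writing $\ddbar u\wedge(2\omega_0^{n-1}+\ddbar u\wedge\omega^{n-2})=\ddbar u\wedge(\omega_0^{n-1}+\omega_u^{n-1})$ is fine, but your plan to ``control each of the mixed terms $\ddbar u\wedge\omega_u^{n-1}$ and $\ddbar u\wedge\omega_0^{n-1}$ by a constant multiple of $\omega^n$'' cannot work: these terms are not separately bounded. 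For instance, if $\omega_0=\omega$ then $\ddbar u\wedge\omega_0^{n-1}=\tfrac{\Delta u}{n}\,\omega^n$, and at this stage $\Delta u$ (equivalently $\tr{\omega}{\tilde{\omega}}$, where $\tilde\omega=\omega_h+\tfrac{1}{n-1}((\Delta u)\omega-\ddbar u)$) has no a priori upper bound -- bounding it is precisely the content of the second-order estimate, which in this paper comes \emph{after} and depends on the $L^\infty$ bound. In eigenvalue terms (with $\omega=\sum e_i$, $\tilde\omega=\sum\lambda_i e_i$), each mixed term contains contributions of size $\tr{\omega}{\tilde{\omega}}=\sum_i\lambda_i$, and the determinant constraint $\lambda_1\cdots\lambda_n=e^{F+b}$ does not prevent $\sum_i\lambda_i$ from being large. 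What makes the lemma true is an exact algebraic cancellation between the two mixed terms: using the precise relation $(n-1)!\,\omega_h=*\omega_0^{n-1}$ (so that $(\tr{\omega_0}{\omega})\omega_0^n=(\tr{\omega}{\omega_h})\omega^n$) and the specific coefficient $2$, the unbounded cross terms cancel identically (this is the identity (\ref{claimmagic})--(\ref{claimmagic2}) in the paper), leaving only the pure quadratic expression
$$-\frac{n-2}{n}(\tr{\omega}{\tilde{\omega}})^2\,\omega^n+(n-1)^2\,\tilde{\omega}^2\wedge\omega^{n-2},$$
and even this is not bounded above by positivity or AM--GM alone: one needs the two-case analysis ($\lambda_2<\lambda_n/2$ versus $\lambda_2\ge\lambda_n/2$) exploiting $\lambda_1\cdots\lambda_n=e^{F+b}$. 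Your sketch contains neither the cancellation nor this dichotomy, and the decomposition you propose sends you down a route where the individual pieces you wish to estimate are genuinely unbounded.

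A secondary, more minor point: in Step 3 you describe \cite{TW2} as upgrading the $L^1$ bound on $u$ (from $\Delta u\ge -C$ and the Green's function) to an $L^{p_0}$ bound on $e^{-u/2}$. That is not what \cite{TW2} provides, and an exponential-integrability upgrade from an $L^1$ bound alone is not available here. What the Cherrier inequality yields via \cite{TW2} is the measure estimate $|\{u\le\inf_M u+N\}|\ge\delta$ for uniform $N,\delta$, which is then combined with the $L^1$ bound of $u$ to bound $-\inf_M u$ directly; since you defer to \cite{TW2} for this step, as the paper does, this is an imprecision rather than a fatal flaw, but as stated the intermediate claim would not hold.
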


First note that the constant $b$ is uniformly bounded in terms of $\sup_M |F|$, $\omega$ and $\omega_h$ by a standard maximum principle argument \cite{Ch, TW1}.  Indeed, at a point at which  $u$ achieves a maximum we have
$$(\Delta u) \omega - \ddbar u \le 0,$$
and from (\ref{MA1})  the upper bound for $b$ follows.  The lower bound is similar.

We introduce some notation that will be used throughout the paper.
Write $$\tilde{F} = F+b.$$
As in Section \ref{prelims}, define  a Hermitian metric $\omega_0$ by $(n-1)! * \omega_h = \omega_0^{n-1}$.  It then follows from the discussion there that
\begin{equation} \label{pd}
\omega_0^{n-1}  + \ddbar u \wedge \omega^{n-2}>0.
\end{equation}
Write
\begin{equation} \label{defntildeo}
\tilde{\omega} = \omega_h + \frac{1}{n-1} (( \Delta u)\omega - \ddbar u).
\end{equation}
Note that taking the trace of this we obtain
$$\tr{\omega}{\tilde{\omega}} = \tr{\omega}{\omega_h} + \Delta u,$$
and hence
\begin{equation} \label{DLB}
\Delta u = \tr{\omega}{\tilde{\omega}} - \tr{\omega}{\omega_h} \ge -C.
\end{equation}

We first prove:
\begin{lemma} \label{keyli}
There exists a uniform constant $C$ such that
\begin{equation} \label{claimo0}
\ddbar u \wedge \left(2 \omega_0^{n-1} + \ddbar u \wedge \omega^{n-2} \right) \le C \omega^n,
\end{equation}
\end{lemma}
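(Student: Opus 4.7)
My plan has three conceptual steps: (i) rewrite the left-hand side using Hodge duality so that it becomes a scalar inner product; (ii) apply the defining identity for $\tilde\omega$ to expand that inner product and reduce to a pointwise inequality in the eigenvalues of $\tilde\omega$; (iii) prove the resulting elementary algebraic inequality using the Monge--Amp\`ere equation.

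For step (i), the starting point is the identity
\[
2\omega_0^{n-1} + \ddbar u\wedge\omega^{n-2} = (n-1)!\,{*}(\omega_h+\tilde\omega),
\]
which follows immediately from $\omega_0^{n-1} = (n-1)!\,{*}\omega_h$ together with $\ddbar u\wedge\omega^{n-2} = (n-2)!\,{*}((\Delta u)\omega - \ddbar u) = (n-1)!\,{*}(\tilde\omega - \omega_h)$, both recorded in Section~\ref{prelims}. Pairing against $\ddbar u$ via the Hodge star identity $\alpha\wedge{*}\beta = \langle\alpha,\beta\rangle_g\,\omega^n/n!$ then converts the $(n,n)$-form on the left-hand side of \eqref{claimo0} into $\tfrac{(n-1)!}{n!}\langle\ddbar u,\omega_h+\tilde\omega\rangle_g\,\omega^n$.

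For step (ii), inserting $\ddbar u = (n-1)\omega_h + (\Delta u)\omega - (n-1)\tilde\omega$ (read off directly from \eqref{defntildeo}) and $\Delta u = \tr{\omega}{\tilde\omega} - \tr{\omega}{\omega_h}$ into the inner product, the cross terms involving $\langle\omega_h,\tilde\omega\rangle_g$ cancel and a short algebraic computation yields
\[
\ddbar u\wedge\bigl(2\omega_0^{n-1} + \ddbar u\wedge\omega^{n-2}\bigr) = \tfrac{1}{n}\bigl[(\tr{\omega}{\tilde\omega})^2 - (n-1)|\tilde\omega|_g^2\bigr]\omega^n + O(1)\,\omega^n,
\]
where the $O(1)$ depends only on the fixed Hermitian metric $\omega_h$. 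At any point, choose unitary coordinates for $\omega$ that simultaneously diagonalize $\tilde\omega$ with eigenvalues $\mu_i>0$ (positivity from \eqref{condthm}); the bracket becomes $\bigl(\sum_i\mu_i\bigr)^2 - (n-1)\sum_i\mu_i^2$, while \eqref{MA1} together with the uniform bound on the constant $b$ gives $\mu_1\cdots\mu_n = e^{F+b}$, uniformly bounded above and below.

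Step (iii) is the main content and the main obstacle: establish the sharp scale-invariant algebraic inequality
\[
\Bigl(\sum_{i=1}^n\mu_i\Bigr)^2 - (n-1)\sum_{i=1}^n\mu_i^2 \le n\Bigl(\prod_{i=1}^n\mu_i\Bigr)^{2/n},\qquad \mu_i>0.
\]
By degree-two homogeneity I normalize $\prod_i\mu_i = 1$ and the target becomes $\bigl(\sum_i\mu_i\bigr)^2 - (n-1)\sum_i\mu_i^2 \le n$. A Lagrange multiplier analysis on $\{\prod\mu_i=1\}$ shows that at an interior critical point each $\mu_i$ must satisfy a single quadratic in $\mu$; combined with the strict positivity $\mu_i>0$, this rules out all non-symmetric configurations and leaves only the symmetric point $\mu_1=\cdots=\mu_n=1$, where equality holds. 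A direct asymptotic analysis shows that the left-hand side tends to $-\infty$ (or stays bounded above by $0$) as any $\mu_i$ diverges to $0$ or $\infty$ with the product fixed, so the supremum is attained at the symmetric critical point and equals $n$. Combining steps (i)--(iii) gives the pointwise bound of the lemma; crucially, it is in this final algebraic inequality that the positivity $\tilde\omega>0$ (equivalently $\mu_i>0$) from \eqref{condthm} is genuinely used.
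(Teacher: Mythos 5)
Your argument is correct, and it reaches the paper's proof by a cleaner but essentially parallel route. Steps (i)--(ii) are a repackaging of what the paper does by brute force: your Hodge-star identity $2\omega_0^{n-1}+\ddbar u\wedge\omega^{n-2}=(n-1)!\,{*}(\omega_h+\tilde{\omega})$ together with $\varphi\wedge{*}\psi=\langle\varphi,\psi\rangle_g\,\omega^n/n!$ turns the left side into $\tfrac1n\langle\ddbar u,\omega_h+\tilde{\omega}\rangle_g\,\omega^n$, and the cancellation of the $\langle\omega_h,\tilde{\omega}\rangle_g$ cross terms is exactly the content of the paper's ``magic'' identity \eqref{claimmagic}, which it verifies by expanding wedge products in coordinates. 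Your exact identity
$\ddbar u\wedge(2\omega_0^{n-1}+\ddbar u\wedge\omega^{n-2})=\tfrac1n\bigl[(\tr{\omega}{\tilde{\omega}})^2-(n-1)|\tilde{\omega}|^2_g\bigr]\omega^n+\tfrac1n\bigl[(n-1)|\omega_h|^2_g-(\tr{\omega}{\omega_h})^2\bigr]\omega^n$
is precisely the paper's reduction to bounding $-\tfrac{n-2}{n}(\tr{\omega}{\tilde{\omega}})^2\omega^n+(n-1)^2\tilde{\omega}^2\wedge\omega^{n-2}$, since in eigenvalues both equal $\tfrac1n\bigl[(\sum_i\mu_i)^2-(n-1)\sum_i\mu_i^2\bigr]$. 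The genuine divergence is in step (iii): the paper does not prove your sharp inequality $(\sum_i\mu_i)^2-(n-1)\sum_i\mu_i^2\le n(\prod_i\mu_i)^{2/n}$, but only a non-sharp upper bound, by writing the quantity as $-\sum_{2\le i<j}(\mu_i-\mu_j)^2-(n-2)\mu_1^2+2\mu_1\sum_{i\ge2}\mu_i$ and splitting into the cases $\mu_2<\mu_n/2$ and $\mu_2\ge\mu_n/2$, using the upper and lower bounds on $\prod_i\mu_i=e^{F+b}$. Your sharp inequality is in fact true, and yields the lemma; its advantage is elegance, while the paper's two-case estimate is shorter and is all the lemma needs.

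Two points in your step (iii) need fleshing out. First, the Lagrange condition only says each $\mu_i$ solves $2(n-1)\mu_i^2-2S\mu_i+\lambda=0$ (with $S=\sum_j\mu_j$), hence takes at most two values $a,b$; to exclude $a\neq b$ you must still note that $k$ copies of $a$ and $n-k$ of $b$ force $a+b=S/(n-1)$, i.e.\ $(n-1-k)a=(1-k)b$, which has no solution with $a,b>0$ distinct. Second, and more importantly, the constraint surface $\{\prod_i\mu_i=1\}$ is noncompact, so the assertion that the supremum is attained at the interior critical point requires showing $\limsup\bigl[(\sum\mu_i)^2-(n-1)\sum\mu_i^2\bigr]\le 0$ along \emph{every} sequence with $\mu_{\max}\to\infty$, not just when a single $\mu_i$ degenerates; several eigenvalues may blow up or vanish simultaneously. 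Carrying this out (e.g.\ via $\mu_1\le 1$ when $\mu_2<\mu_n/2$, and $\mu_1\le 2^{n-2}\mu_n^{-(n-1)}$ when $\mu_2\ge\mu_n/2$) essentially reproduces the paper's case analysis, so the ``asymptotic analysis'' you wave at is where the real work of the lemma lives. With those two items written out, your proof is complete and correct; note also that positivity of the $\mu_i$, i.e.\ \eqref{condthm}, and the uniform bound on $b$ from \eqref{MA1} enter exactly as you say.
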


\begin{proof}
From (\ref{defntildeo}) and (\ref{DLB}) we have
\begin{equation} \label{ddbu0}
\ddbar u = (n-1) \omega_h + (\tr{\omega}{\tilde{\omega}} - \tr{\omega}{\omega_h} )\omega - (n-1)\tilde{\omega}.
\end{equation}

Compute
\[
\begin{split}
\lefteqn{\ddbar u \wedge \left(2 \omega_0^{n-1} + \ddbar u \wedge \omega^{n-2} \right)} \\
={} & ((n-1) \omega_h + (\tr{\omega}{\tilde{\omega}} - \tr{\omega}{\omega_h} )\omega - (n-1)\tilde{\omega}) \\
&\wedge (2\omega_0^{n-1} + ((n-1) \omega_h + (\tr{\omega}{\tilde{\omega}} - \tr{\omega}{\omega_h} )\omega - (n-1)\tilde{\omega}) \wedge \omega^{n-2})\\
\le {} &  C\omega^n+ (n-1) (\tr{\omega}{\tilde{\omega}}) \omega_h \wedge \omega^{n-1} - (n-1)^2 \omega_h \wedge \tilde{\omega} \wedge \omega^{n-2} \\
& + 2 (\tr{\omega}{\tilde{\omega}}) \omega \wedge \omega_0^{n-1} + (n-1)(\tr{\omega}{\tilde{\omega}}) \omega_h\wedge \omega^{n-1} + (\tr{\omega}{\tilde{\omega}})^2 \omega^n \\
& - (\tr{\omega}{\tilde{\omega}})(\tr{\omega}{\omega_h}) \omega^n - (n-1) (\tr{\omega}{\tilde{\omega}}) \tilde{\omega} \wedge \omega^{n-1} - (\tr{\omega}{\omega_h})(\tr{\omega}{\tilde{\omega}}) \omega^n \\
& + (n-1) (\tr{\omega}{\omega_h}) \tilde{\omega} \wedge \omega^{n-1} - 2(n-1) \tilde{\omega} \wedge \omega_0^{n-1} - (n-1)^2 \tilde{\omega} \wedge \omega_h \wedge \omega^{n-2}\\
& - (n-1) (\tr{\omega}{\tilde{\omega}}) \tilde{\omega} \wedge \omega^{n-1} + (n-1) (\tr{\omega}{\omega_h}) \tilde{\omega} \wedge \omega^{n-1} + (n-1)^2 \tilde{\omega}^2 \wedge \omega^{n-2}.
\end{split}
\]
Then, using the fact that $(\tr{\omega}{\tilde{\omega}}) \omega^n = n \omega^{n-1} \wedge \tilde{\omega}$, we have
\[
\begin{split}
\lefteqn{\ddbar u \wedge \left(2 \omega_0^{n-1} + \ddbar u \wedge \omega^{n-2} \right)} \\
\le {} & C \omega^n + \frac{(n-1)}{n} (\tr{\omega}{\tilde{\omega}}) (\tr{\omega}{\omega_h}) \omega^n - (n-1)^2 \tilde{\omega} \wedge \omega_h \wedge \omega^{n-2} \\
& + \frac{2}{n} (\tr{\omega}{\tilde{\omega}})(\tr{\omega_0}{\omega}) \omega_0^n + \frac{(n-1)}{n} (\tr{\omega}{\tilde{\omega}})(\tr{\omega}{\omega_h}) \omega^n + (\tr{\omega}{\tilde{\omega}})^2 \omega^n \\
& -(\tr{\omega}{\tilde{\omega}}) (\tr{\omega}{\omega_h}) \omega^n -\frac{(n-1)}{n} (\tr{\omega}{\tilde{\omega}})^2 \omega^n - (\tr{\omega}{\omega_h}) (\tr{\omega}{\tilde{\omega}}) \omega^n \\
& + \frac{(n-1)}{n} (\tr{\omega}{\omega_h}) (\tr{\omega}{\tilde{\omega}}) \omega^n - \frac{2(n-1)}{n} (\tr{\omega_0}{\tilde{\omega}}) \omega_0^n - (n-1)^2 \tilde{\omega} \wedge \omega_h \wedge \omega^{n-2} \\
& - \frac{(n-1)}{n} (\tr{\omega}{\tilde{\omega}})^2 \omega^n + \frac{(n-1)}{n} (\tr{\omega}{\omega_h})(\tr{\omega}{\tilde{\omega}}) \omega^n + (n-1)^2 \tilde{\omega}^2 \wedge \omega^{n-2} \\
 = {} & C \omega^n + \frac{(2n-4)}{n} (\tr{\omega}{\tilde{\omega}}) (\tr{\omega}{\omega_h}) \omega^n  - 2(n-1)^2 \tilde{\omega} \wedge \omega_h \wedge \omega^{n-2} \\
& + \frac{2}{n} (\tr{\omega}{\tilde{\omega}}) (\tr{\omega_0}{\omega}) \omega_0^n - \frac{2(n-1)}{n} (\tr{\omega_0}{\tilde{\omega}} ) \omega_0^n - \frac{(n-2)}{n} (\tr{\omega}{\tilde{\omega}})^2 \omega^n \\
& + (n-1)^2 \tilde{\omega}^2 \wedge \omega^{n-2}.
\end{split}
\]
We claim that
\begin{equation} \label{claimmagic}
\begin{split}
\lefteqn{\frac{(2n-4)}{n} (\tr{\omega}{\tilde{\omega}})(\tr{\omega}{\omega_h}) \omega^n - 2(n-1)^2 \tilde{\omega} \wedge \omega_h \wedge \omega^{n-2}} \\
& + \frac{2}{n} (\tr{\omega}{\tilde{\omega}}) (\tr{\omega_0}{\omega}) \omega_0^n - \frac{2(n-1)}{n} (\tr{\omega_0}{\tilde{\omega}}) \omega_0^n =0.
\end{split}
\end{equation}
To prove the claim, we first note that
$$(\tr{\omega_0}{\omega}) \omega_0^n = n \omega_0^{n-1} \wedge \omega = n!  * \omega_h \wedge \omega = n \omega^{n-1} \wedge \omega_h = (\tr{\omega}{\omega_h}) \omega^n,$$
and hence it suffices to prove
\begin{equation} \label{claimmagic2}
(\tr{\omega}{\tilde{\omega}})(\tr{\omega}{\omega_h}) \omega^n - n(n-1) \tilde{\omega} \wedge \omega_h \wedge \omega^{n-2} -  (\tr{\omega_0}{\tilde{\omega}}) \omega_0^n =0.
\end{equation}
Now
 choose coordinates in which, at a  point, $\omega = \sum_{i=1}^n e_i$ and $\omega_0 = \sum_{i=1}^n \alpha_i e_i$, for $e_i$ defined as in Section \ref{prelims}.  Then observe that from the definition of $*$ we have
$$\omega_h = \frac{1}{(n-1)!} * \omega_0^{n-1} =  \sum_{i=1}^n \alpha_1 \cdots \widehat{\alpha_i} \cdots \alpha_n e_i = \frac{\omega_0^n}{\omega^n} \sum_{i=1}^n \frac{1}{\alpha_i} e_i,$$
since
$$\frac{\omega_0^n}{\omega^n} = \alpha_1 \cdots \alpha_n.$$
We write the coefficient of $e_i$ in $\tilde{\omega}$ as $\lambda_i$.  Namely,
$$\tilde{\omega} = \sum_{i=1}^n \lambda_i e_i + \textrm{O.D.T.}$$
where $\textrm{O.D.T.}$ represent ``off-diagonal terms'' containing $dz^k \wedge d\ov{z}^{\ell}$ for $k\neq \ell$, which will disappear in our calculation.  Note that the $\lambda_i$ are positive.
Now compute
\begin{equation} \label{1mag}
\begin{split}
(\tr{\omega}{\tilde{\omega}})(\tr{\omega}{\omega_h}) \omega^n = {} &  \left( \sum_{i,j=1}^n \frac{\lambda_i}{\alpha_j} \right) \omega_0^n,
\end{split}
\end{equation}
and
\begin{equation} \label{2mag}
\begin{split}
\lefteqn{- n(n-1) \tilde{\omega} \wedge \omega_h \wedge \omega^{n-2} } \\= {} & -n(n-1) \left( \sum_{i=1}^n \lambda_i e_i \right) \wedge \left( \sum_{j=1}^n \frac{1}{\alpha_j} e_j \right) \frac{\omega_0^n}{\omega^n}  \\
& \wedge (n-2)! \sum_{k<\ell} e_1 \wedge \cdots \wedge \widehat{e_k} \wedge \cdots \wedge \widehat{e_{\ell}} \wedge \cdots \wedge e_n \\
={} & - n! \left( \sum_{k< \ell} \frac{\lambda_k}{\alpha_{\ell}} + \sum_{k< \ell} \frac{\lambda_{\ell}}{\alpha_k} \right) \frac{\omega_0^n}{\omega^n} e_1\wedge \cdots \wedge e_n\\
={} & -  \left( \sum_{k \neq \ell} \frac{\lambda_k}{\alpha_{\ell}} \right) \omega_0^n,
\end{split}
\end{equation}
and
\begin{equation} \label{3mag}
\begin{split}
-  (\tr{\omega_0}{\tilde{\omega}}) \omega_0^n = - \left( \sum_{i=1}^n \frac{\lambda_i}{\alpha_i} \right) \omega_0^n.
\end{split}
\end{equation}
Combining (\ref{1mag}), (\ref{2mag}) and (\ref{3mag}), we obtain (\ref{claimmagic2}) and this proves the claim (\ref{claimmagic}).

Hence we have shown that
\[
\begin{split}
\lefteqn{\ddbar u \wedge \left(2 \omega_0^{n-1} + \ddbar u \wedge \omega^{n-2} \right)} \\
\le {} &  C \omega^n   - \frac{(n-2)}{n} (\tr{\omega}{\tilde{\omega}})^2 \omega^n
 + (n-1)^2 \tilde{\omega}^2 \wedge \omega^{n-2}.
\end{split}
\]
It suffices to show that the sum of the last two terms on the right hand side is bounded from above.  Choose coordinates so that, at a point, $\omega = \sum_{i=1}^n e_i$ and $\tilde{\omega} = \sum_{i=1}^n \lambda_i e_i$ with  $0<\lambda_1 \le \lambda_2 \le \cdots \le \lambda_n$. Then we have,
 \begin{equation} \nonumber
 \begin{split}
 \lefteqn{- \frac{(n-2)}{n} (\tr{\omega}{\tilde{\omega}})^2 \omega^n
 + (n-1)^2 \tilde{\omega}^2 \wedge \omega^{n-2} } \\
={} & \omega^n \left( - \frac{(n-2)}{n} \left( \sum_{i=1}^n \lambda_i \right)^2  + \frac{2(n-1)}{n} \sum_{i< j} \lambda_i \lambda_j \right) \\
= {} & \frac{\omega^n}{n} \left(  -(n-2) \sum_{i=1}^n \lambda_i^2 - 2(n-2) \sum_{i < j} \lambda_i \lambda_j  + 2(n-1) \sum_{i < j} \lambda_i \lambda_j \right) \\
= {} & \frac{\omega^n}{n} \left( - (n-2) \sum_{i=2}^{n} \lambda_i^2 + 2 \sum_{2 \le i<j \le n} \lambda_i \lambda_j   -(n-2)\lambda_1^2 + 2\lambda_1\left( \sum_{i=2}^n \lambda_i \right)  \right) \\
\le {} &  \frac{\omega^n}{n} \left( - \sum_{2 \le i < j \le n} (\lambda_i - \lambda_j)^2 + 2\lambda_1\left( \sum_{i=2}^n \lambda_i \right)  \right).
 \end{split}
 \end{equation}
To see that this expression is uniformly bounded from above, we make use of the equation
\begin{equation} \label{lambdaeqn}
\lambda_1 \cdots \lambda_n  = e^{\tilde{F}}.
\end{equation}
First suppose that $\lambda_2 < \frac{\lambda_n}{2}$.  Then $(\lambda_2-\lambda_n)^2 \ge \frac{1}{4} \lambda_n^2$.  We immediately obtain
$$ - \sum_{2 \le i < j \le n} (\lambda_i - \lambda_j)^2 + 2\lambda_1\left( \sum_{i=2}^n \lambda_i \right) \le - \frac{1}{4} \lambda_n^2 + C \lambda_n \le C',$$
where we are using the fact that $\lambda_1$ is the smallest eigenvalue and hence is uniformly bounded above.

On the other hand, if $\lambda_2 \ge \lambda_n/2$ then we have $\lambda_i \ge \lambda_n/2$ for $i=2, 3, \ldots, n$.  From (\ref{lambdaeqn}),
$$\lambda_1 \le \frac{C}{\lambda_2 \cdots \lambda_n} \le \frac{C\, 2^{n-2}}{\lambda_n^{n-1}}.$$
 Hence in this case
$$ - \sum_{2 \le i < j \le n} (\lambda_i - \lambda_j)^2 + 2\lambda_1\left( \sum_{i=2}^n \lambda_i \right) \le \frac{C'}{\lambda_n^{n-1}} \lambda_n = \frac{C' }{\lambda_n^{n-2}}\le C',$$
since $\lambda_n$ is the largest eigenvalue and hence is bounded from below uniformly away from zero.
This finishes the proof of the lemma. \end{proof}

Applying this lemma, we obtain the following Cherrier-type estimate (see \cite{Ch} and also \cite{TW2}):

\begin{lemma} \label{Ch} There exists a uniform constant $C$ such that for all $p \ge 1$,
\begin{equation} \label{che}
\int_M | \partial e^{-\frac{pu}{2}} |_g^2 \,  \omega^n \le Cp \int_M e^{-pu} \omega^n.
\end{equation}
\end{lemma}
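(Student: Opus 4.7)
Since $|\partial e^{-\frac{pu}{2}}|_g^2=\frac{p^2}{4}\,e^{-pu}|\partial u|_g^2$, the estimate \eqref{che} is equivalent to $\int_M e^{-pu}|\partial u|_g^2\,\omega^n\le \frac{C}{p}\int_M e^{-pu}\,\omega^n$ for $p\ge 1$. My plan is to integrate the pointwise inequality of Lemma \ref{keyli} against $e^{-pu}$, then use integration by parts to convert the Hessian terms into gradient terms, crucially exploiting that $\omega$ is K\"ahler.

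Multiplying Lemma \ref{keyli} by $e^{-pu}$ and integrating gives
$$2\int_M e^{-pu}\,\ddbar u\wedge \omega_0^{n-1}+\int_M e^{-pu}\,\ddbar u\wedge \ddbar u\wedge \omega^{n-2}\le C\int_M e^{-pu}\,\omega^n.$$
To each of these terms I would apply the identity (obtained from one application of Stokes' theorem)
$$\int_M e^{-pu}\,\ddbar u\wedge \alpha = p\int_M e^{-pu}\,\mn\partial u\wedge\dbar u\wedge \alpha + \int_M \mn e^{-pu}\,\dbar u\wedge \partial\alpha,$$
valid for any smooth $(n-1,n-1)$-form $\alpha$. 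Taking $\alpha=\omega_0^{n-1}$ and $\alpha=\ddbar u\wedge \omega^{n-2}$, and observing that $\partial\alpha=0$ in the second case (since $\partial\ddbar u=0$ and $\partial\omega^{n-2}=0$ because $\omega$ is K\"ahler), summing the two yields on the left
$$p\int_M e^{-pu}\,\mn\partial u\wedge\dbar u\wedge \bigl(2\omega_0^{n-1}+\ddbar u\wedge \omega^{n-2}\bigr)$$
plus a single remainder $2\int_M \mn e^{-pu}\,\dbar u\wedge \partial\omega_0^{n-1}$ coming from the non-closedness of $\omega_0^{n-1}$.

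This remainder is the one non-routine step, and is where I expect the main difficulty. I would handle it by a second integration by parts: writing $e^{-pu}\dbar u=-\tfrac{1}{p}\dbar e^{-pu}$ and then moving the $\dbar$ onto the fixed smooth form $\partial\omega_0^{n-1}$ yields
$$\Bigl|\int_M \mn e^{-pu}\,\dbar u\wedge \partial\omega_0^{n-1}\Bigr|\le \frac{C}{p}\int_M e^{-pu}\,\omega^n,$$
which for $p\ge 1$ can be absorbed into the right-hand side. Finally I would invoke the positivity \eqref{pd}: since $\omega_0^{n-1}+\ddbar u\wedge\omega^{n-2}\ge 0$ and $\omega_0^{n-1}\ge c\,\omega^{n-1}$ as positive $(n-1,n-1)$-forms (the latter by comparability of the Hermitian metric $\omega_0$ to $\omega$ on the compact manifold $M$), wedging with the nonnegative $(1,1)$-form $\mn\partial u\wedge \dbar u$ preserves the inequality and gives $\frac{cp}{n}\int_M e^{-pu}|\partial u|_g^2\,\omega^n\le C\int_M e^{-pu}\,\omega^n$, equivalent to \eqref{che}.

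The hard part is precisely the remainder from $\partial\omega_0^{n-1}$, reflecting that $\omega_0$ is Hermitian rather than K\"ahler: a single integration by parts would leave an error of size $O(1)\int_M e^{-pu}\,\omega^n$, which would swamp the $O(p)$ main term. The double integration by parts provides the crucial factor $1/p$ that renders this remainder negligible, and is the key extra step compared to the standard K\"ahler Cherrier argument used in \cite{Ch, TW2}.
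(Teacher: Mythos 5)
Your proposal is correct and follows essentially the same route as the paper: integrating Lemma \ref{keyli} against $e^{-pu}$, one integration by parts using that $\ddbar u\wedge\omega^{n-2}$ is $\partial$-closed, a second integration by parts to turn the $\partial\omega_0^{n-1}$ remainder into $\frac{1}{p}\int_M e^{-pu}\,\mn\partial\dbar\,\omega_0^{n-1}$, and then the positivity \eqref{pd} together with the uniform equivalence of $\omega_0$ and $\omega$. The only difference is cosmetic (you discard the positive $\ddbar u\wedge\omega^{n-2}$ contribution at the end rather than before the second integration by parts), so nothing further is needed.
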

\begin{proof}
From (\ref{claimo0}), we have
$$\int_M e^{-pu} \ddbar u \wedge (2 \omega_0^{n-1} + \ddbar u \wedge \omega^{n-2} ) \le C \int_M e^{-pu}\omega^n.$$
Integrating by parts,
\[
\begin{split}
\lefteqn{p\int_M e^{-pu} \sqrt{-1} \partial u \wedge \ov{\partial}u \wedge (2 \omega_0^{n-1} + \ddbar u \wedge \omega^{n-2} )} \\
&  - 2\int_M e^{-pu} \sqrt{-1} \partial \omega_0^{n-1} \wedge \ov{\partial} u \le C \int_M e^{-pu}\omega^n.
\end{split}
\]
But from (\ref{pd}), this gives us
\[
\begin{split}
p\int_M e^{-pu} \sqrt{-1} \partial u \wedge \ov{\partial}u \wedge \omega_0^{n-1}
+\frac{2}{p} \int_M  \sqrt{-1} \partial \omega_0^{n-1} \wedge \ov{\partial} (e^{-pu})
 \le C \int_M e^{-pu}\omega^n.
\end{split}
\]
Hence
\[
\begin{split}
\lefteqn{\frac{4}{p} \int_M \sqrt{-1} \partial e^{-\frac{pu}{2}} \wedge \ov{\partial} e^{-\frac{pu}{2}} \wedge \omega_0^{n-1}} \\
 \le {} & C \int_M e^{-pu}\omega^n + \frac{2}{p} \int_M e^{-pu} \sqrt{-1} \partial \ov{\partial} \omega_0^{n-1}
 \le C' \int_M e^{-pu} \omega^n,
\end{split}
\]
and since $\omega_0$ and $\omega$ are uniformly equivalent, this completes the proof of the lemma.
\end{proof}

Given Lemma \ref{Ch}, we can now apply the arguments of \cite{TW1, TW2} to obtain the $L^{\infty}$ bound of $u$.

\begin{proof}[Proof of Theorem \ref{theoremC0}]
It is shown in \cite{TW2} that if (\ref{che}) holds for a uniform $C$ and for all $p$ larger than some uniform constant (here it holds for any $p\ge1$) we obtain the estimate
\begin{equation} \label{size}
| \{ u \le \inf_M u + N \} | \ge \delta,
\end{equation}
for uniform constants $N$ and $\delta>0$.
On the other hand, it is well-known that the conditions $\sup_M u=0$ and $\Delta u \ge -C$ (see (\ref{DLB})) imply that we have a uniform $L^1$ bound for $u$.  Indeed, if $x\in M$ is a point where $u$ achieves its maximum then Green's formula gives us
$$0 = u(x) = \frac{1}{\int_M \omega^n} \int_M u \omega^n - \int_M G(x,y) \Delta u(y) \omega^n(y),$$
where $G(x,y)$ is the Green's function associated to $\Delta$, normalized so that $G(x,y) \ge 0$ and $\int_M G(x,y)\omega^n(y) \le C$.  We conclude that
$$\int_M (-u) \omega^n \le C.$$
The $L^{\infty}$ bound for $u$ now follows immediately from this $L^1$ bound and (\ref{size}).  Indeed,
$$- \delta \inf_M u \le \int_{ \{ u \le \inf_M u+N \}} (-u+N) \le C,$$
giving the required uniform lower bound for $\inf_M u$.
\end{proof}

\section{Second order estimate} \label{soe}

We continue the proof of Theorem \ref{maintheorem} by establishing a uniform a priori estimate for the complex Hessian of $u$
which depends on bounds for the first derivatives of $u$.  We phrase this estimate in terms of the metric $\tilde{g}_{i\ov{j}}$ defined by (\ref{defntildeo}), namely
\begin{equation} \label{tildeg1}
\tilde{g}_{i\ov{j}} = h_{i\ov{j}} + \frac{1}{n-1} \left( (\Delta u)g_{i\ov{j}} -u_{i\ov{j}} \right),
\end{equation}
where we recall that  $$\omega_h = \sqrt{-1} h_{i\ov{j}}dz^i \wedge d\ov{z}^j$$ is a fixed Hermitian metric.
We assume that $u$ satisfies the equation (\ref{MA1}), and from Theorem \ref{theoremC0}, we already have a uniform $L^{\infty}$ bound for $u$.  The estimate is of  the same form as that of Hou-Ma-Wu \cite{HMW} and, as discussed in the introduction, our proof contains many similar elements.

\begin{theorem} \label{theoremC2}
There exists a uniform constant $C$ depending only on $(M, g, h)$ and bounds for $F$ such that
$$\emph{tr}_{\omega}{\tilde{\omega}} \le C( \sup_M |\nabla u|^2_g + 1),$$
where $|\nabla u|^2_g := g^{i\ov{j}} \partial_i u \partial_{\ov{j}} u$.
\end{theorem}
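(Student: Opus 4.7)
\medskip\noindent\textbf{Proof plan.} The approach is a maximum principle argument on a Hou-Ma-Wu type test function involving the largest eigenvalue of $\tilde{g}$. Let $\lambda_1 \ge \cdots \ge \lambda_n > 0$ denote the eigenvalues of $\tilde g$ with respect to $g$, set $K := \sup_M |\nabla u|^2_g + 1$, and consider
$$
Q = \log \lambda_1 + \varphi(|\nabla u|^2_g) - A u,
$$
where $A$ is a large constant to be chosen and $\varphi(s) = -\frac{1}{2}\log(1 - s/(2K))$, so that $\varphi'$ and $\varphi''$ are positive and comparable on the range of $|\nabla u|^2_g$. Let $x_0$ be a maximum point of $Q$. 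Since $\lambda_1$ is in general only Lipschitz, I would perturb $\tilde g$ near $x_0$ by a small symmetric $(1,1)$-form to render the top eigenvalue simple, hence smooth, at $x_0$; this perturbation is removed at the end. Choose holomorphic coordinates about $x_0$ in which $g_{i\bar j} = \delta_{ij}$ and $\tilde g_{i\bar j}$ is diagonal, and work with the linearization
$$
L v := \frac{1}{n-1}\Bigl[ (\mathrm{tr}_{\tilde g} g)\,\Delta v - \tilde g^{i\bar j} v_{i\bar j}\Bigr]
$$
of the concave operator $u \mapsto \log\det \tilde g$. In these coordinates $Lv = \frac{1}{n-1}\sum_j v_{j\bar j}\bigl(\sum_{i \ne j}\lambda_i^{-1}\bigr)$, so $L$ is elliptic and $LQ(x_0) \le 0$.

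A short algebraic computation directly from \eqref{tildeg1} yields the identity $L u = n - \mathrm{tr}_{\tilde g}h$. Consequently $L(-A u) = -A n + A\,\mathrm{tr}_{\tilde g}h$, and for $A$ large the latter term provides a positive contribution of order $A\sum_i \lambda_i^{-1}$, which will dominate at indices where $\lambda_i$ is small. Differentiating the equation $\log\det\tilde g = \tilde F + \log\det g$ once and twice gives the identities needed to expand $L(\varphi(|\nabla u|^2_g))$; one extracts a good third-order term of the shape $\varphi''\,\tilde g^{i\bar i}\bigl|\partial_i|\nabla u|^2_g\bigr|^2$ plus positive lower-order contributions modulo bounded error terms. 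The expansion of $L(\log\lambda_1)$ produces a principal positive second-order contribution together with the standard negative bad third-order term
$$
-\frac{1}{n-1}\sum_i \tilde g^{i\bar i}\,\frac{|\partial_i \tilde g_{1\bar 1}|^2}{\lambda_1^2},
$$
together with off-diagonal pieces involving the spread of the $\lambda_i$ which vanish as the perturbation is removed.

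The main obstacle, exactly as in \cite{HMW}, will be controlling this bad third-order term. I would split the indices $i$ into those with $\lambda_i \ge \delta \lambda_1$ and those with $\lambda_i < \delta \lambda_1$, for a small universal $\delta > 0$. On the first set, the once-differentiated equation $\sum_i \lambda_i^{-1} \partial_\ell \tilde g_{i\bar i} = \partial_\ell \tilde F + O(1)$ together with a Cauchy-Schwarz argument absorbs the bad term into the good term coming from $L(\varphi)$. On the second set, the bad term is dominated directly by the positivity from $A\,\mathrm{tr}_{\tilde g}h$ once $A$ is chosen large enough. Because in the eigenbasis of $\tilde g$ our equation is simply $\lambda_1\cdots\lambda_n = e^{\tilde F}$, we avoid the Chou-Wang type concavity inequalities for elementary symmetric polynomials required for the $\sigma_k$-Hessian case in \cite{HMW}. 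A minor bookkeeping issue is that derivatives of $\tilde g_{i\bar j}$ differ from those of $u_{i\bar j}$ by the trace term $(\Delta u)g_{i\bar j}/(n-1)$ and by derivatives of the fixed metric $h$, but these contributions are either diagonal or bounded and are easily absorbed. Combining all estimates at $x_0$ yields $\lambda_1(x_0) \le C K$, and since $\mathrm{tr}_\omega\tilde\omega = \sum_i \lambda_i \le n\lambda_1$, this gives the theorem globally after removing the perturbation.
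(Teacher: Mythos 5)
Your framework is the right one (a Hou--Ma--Wu type maximum principle for $\log(\text{top eigenvalue})+\varphi(|\nabla u|^2_g)+\psi(u)$, linearized operator $\Theta^{i\ov{j}}=\frac{1}{n-1}((\tr{\tilde{g}}{g})g^{i\ov{j}}-\tilde{g}^{i\ov{j}})$, the identity $\Theta^{i\ov{j}}u_{i\ov{j}}=n-\tr{\tilde{g}}{h}$, no Chou--Wang inequalities), but two of your choices break down exactly where the real difficulty of this estimate lies. The fatal one is taking $\psi(u)=-Au$ and claiming the small-eigenvalue part of the bad third-order term is ``dominated by $A\,\tr{\tilde{g}}{h}$ for $A$ large''. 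At the maximum the critical-point equation gives $\nabla_i\log\lambda_1=-\varphi'\nabla_i|\nabla u|^2_g+Au_i$, so the bad term $-\sum_i\Theta^{i\ov{i}}|\nabla_i\lambda_1|^2/\lambda_1^2$ produces, after Cauchy--Schwarz, $-2(\varphi')^2\Theta^{i\ov{i}}\big|\nabla_i|\nabla u|^2_g\big|^2-2A^2\Theta^{i\ov{i}}|u_i|^2$; the first piece is cancelled exactly by $\varphi''=2(\varphi')^2$ with nothing to spare, while the second is of size up to $2A^2K\,\tr{\tilde{g}}{g}$ with $K=\sup_M|\nabla u|^2_g+1$. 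Against this you only have $A\,\tr{\tilde{g}}{h}\sim A\,\tr{\tilde{g}}{g}$ and $\varphi'\Theta^{i\ov{i}}u_{i\ov{i}}^2\sim K^{-1}\Theta^{i\ov{i}}u_{i\ov{i}}^2$: the bad term scales like $A^2K$ and the good term like $A$, so enlarging $A$ makes matters worse, and any admissible $A$ would have to depend on $K$, destroying the bound $\lambda\le CK$. Moreover, in the pinched regime where all large eigenvalues of $\tilde{g}$ are mutually comparable and the smallest is tiny (forced by $\prod_i\lambda_i=e^{\tilde{F}}$), one has $\tr{\tilde{g}}{g}\sim\lambda_{\min}^{-1}\sim\lambda_{\max}^{n-1}$ while, by \eqref{defneta}, $u_{i\ov{i}}=O(1)$ in the directions of the large eigenvalues, so the $\varphi'\Theta^{i\ov{i}}u_{i\ov{i}}^2$ term is useless there as well. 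This is precisely why the paper (following Hou--Ma--Wu) takes $\psi(t)=-A\log(1+t/2L)$: the convexity $\psi''\ge\frac{2\ve}{1-\ve}(\psi')^2$ in \eqref{psidp} supplies the good term $\psi''\Theta^{i\ov{i}}|u_i|^2$, which has exactly the structure needed to absorb $(\psi')^2\Theta^{i\ov{i}}|u_i|^2$ (see \eqref{dos} inside Lemma \ref{klemma}), with constants independent of $K$.

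The second, more structural, issue is differentiating the top eigenvalue of $\tilde{g}$ itself rather than of $\eta_{i\ov{j}}=(\tr{g}{\tilde{g}})g_{i\ov{j}}-(n-1)\tilde{g}_{i\ov{j}}$, which is $u_{i\ov{j}}$ plus a bounded tensor. Derivatives of $\tilde{g}_{1\ov{1}}$ carry the extra third-order term $\frac{1}{n-1}\partial_i(\Delta u)$ (and second derivatives of $\tilde g_{1\ov 1}$ carry $-u_{1\ov 1 i\ov i}$ with the opposite sign to what the twice-differentiated equation provides), so the bad term $\Theta^{i\ov{i}}|\nabla_i\tilde{g}_{1\ov{1}}|^2/\lambda_1^2$ does not match the positive quadratic term $\tilde{g}^{i\ov{q}}\tilde{g}^{p\ov{j}}\nabla_1\tilde{g}_{i\ov{j}}\nabla_{\ov{1}}\tilde{g}_{p\ov{q}}$ coming from differentiating the equation twice in a fixed direction; the whole point of Lemma \ref{klemma} and of the linear-algebra inequality \eqref{stronger1} is that with $\eta$ these two expressions are comparable, and that comparison uses the paper's case split on pinching of the top eigenvalues of $\tilde{g}$ ($\lambda_2\le(1-\delta)\lambda_n$ or not), not a split of indices into small versus large eigenvalues. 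In the pinched case your first-set step (``once-differentiated equation plus Cauchy--Schwarz absorbs the bad term into $L(\varphi)$'') has nothing of the right size to absorb into, so as written the plan does not close; reworking it along the paper's lines requires both the HMW choice of $\psi$ and the tensor $\eta$ (or an equivalent device) in the test quantity.
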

\begin{proof} Recalling (\ref{DLB}),
we define a tensor
\begin{equation} \label{defneta}
\eta_{i\ov{j}} = u_{i\ov{j}} - (n-1) h_{i\ov{j}} + (\tr{g}{h}) g_{i\ov{j}} = (\tr{g}{\tilde{g}})  g_{i\ov{j}} - (n-1) \tilde{g}_{i\ov{j}},
\end{equation}
or in other words,
\begin{equation} \label{defneta2}
\tilde{g}_{i\ov{j}} = \frac{1}{n-1} \left( - \eta_{i\ov{j}} + (\tr{g}{\tilde{g}}) g_{i\ov{j}} \right).
\end{equation}
Following \cite{HMW}, we consider the quantity $H(x, \xi)$ defined by
$$H(x, \xi) = \log(\eta_{i\ov{j}} \xi^i \ov{\xi^j}) + \varphi (|\nabla u|^2_g) + \psi(u),$$
for $x\in M$, $\xi \in T_x^{1,0}M$ a unit vector (with respect to $g$), and for functions $\varphi, \psi$  defined by
\begin{equation}
\begin{split}
\varphi(s) = {} & - \frac{1}{2} \log \left( 1- \frac{s}{2K} \right), \quad \textrm{for } 0 \le s \le K-1, \\
\psi(t) = {} & - A \log \left( 1+ \frac{t}{2L} \right), \quad \textrm{for } -L +1 \le t \le 0,
\end{split}
\end{equation}
where
$$K = \sup_M | \nabla u|^2_g+1,\ L = \sup_M|u| +1, \ A = 2L(C_1 +1)$$
and $C_1$ is a uniformly bounded constant to be determined later.   Recall that we have normalized $u$ so that $\sup_M u=0$. Note that by our $L^{\infty}$ bound for $u$, the quantity $L$ is uniformly bounded.  In addition, $\varphi(|\nabla u|^2_g)$ and $\psi(u)$ are both uniformly bounded.  We remark that, although it may seem more natural to consider $e^H$ instead of $H$,  we prefer the quantity $H$ since, as in \cite{HMW}, it appears to simplify some calculations.

\begin{remark}
In contrast to \cite{HMW}, we introduce the new tensor $\eta_{i\ov{j}}$ in the definition of $H$ instead of working directly with $u_{i\ov{j}}$ because in our coordinates $u_{i\ov{j}}$ is not necessarily diagonal.  However,   $u_{i\ov{j}}$ differs from $\eta_{i\ov{j}}$ by a bounded tensor, and our later calculations will demonstrate that this difference is harmless.
\end{remark}

The function $H$ is not well-defined everywhere.  We restrict $H$ to the compact set $W$ in the $g$-unit tangent bundle of $M$ where $\eta_{i\ov{j}} \xi^i \xi^j \ge 0$, defining $H=-\infty$ when $\eta_{i\ov{j}} \xi^i \xi^j =0$.  Note that $H$ is upper semi-continuous on $W$ and equal to $-\infty$ on the boundary of $W$.   Hence $H$ achieves a maximum at some point $(x_0, \xi_0)$ in the interior of $W$.

We pick a holomorphic coordinate system $z^1, \ldots, z^n$ centered at $x_0$ which is normal for $g$ and with the property that at $x_0$,
$$g_{i\ov{j}} = \delta_{ij}, \quad \eta_{i\ov{j}} = \delta_{ij} \eta_{i\ov{i}},$$
and $$\eta_{1\ov{1}} \ge \eta_{2\ov{2}} \ge \cdots \ge \eta_{n \ov{n}}.$$
It follows from (\ref{defneta2}) that $(\tilde{g}_{i\ov{j}})$ is also diagonal.  Write $\lambda_i = \tilde{g}_{i\ov{i}}$.  Then at $x_0$,
\begin{equation} \label{etalambda}
\eta_{i\ov{i}} = \sum_{j=1}^n \lambda_j - (n-1) \lambda_i,
\end{equation}
and hence
$$0<\lambda_1 \le \lambda_2 \le \cdots \le \lambda_n.$$
The quantities $\lambda_n$, $\eta_{1\ov{1}}$ and $\tr{\omega}{\tilde{\omega}}$ are all uniformly equivalent, and in fact
\begin{equation} \label{ie}
\frac{1}{n} \tr{\omega}{\tilde{\omega}} \le \lambda_n \le \eta_{1\ov{1}} \le (n-1) \lambda_n \le (n-1) \tr{\omega}{\tilde{\omega}}.
\end{equation}

Since $\eta_{1\ov{1}}$ is the largest eigenvalue of $(\eta_{i\ov{j}})$ at $x_0$, we have $\xi_0 = \partial/\partial z^1$ and we extend $\xi_0$ to a locally defined smooth unit vector field
$$\xi_0 = g_{1\ov{1}}^{-1/2} \frac{\partial}{\partial z^1}.$$
Again as in \cite{HMW}, define a new quantity, defined in a neighborhood of $x_0$, by
$$Q(x) = H(x, \xi_0) = \log(g_{1\ov{1}}^{-1} \eta_{1\ov{1}} ) + \varphi (|\nabla u|^2_g) + \psi(u).$$
The function $Q$ has the property that it achieves a maximum at $x=x_0$.  We will show that at $x_0$, we have the bound
$$\eta_{1\ov{1}} \le CK,$$
for a uniform constant $C$, and hence $Q(x_0) \le \log K +C'$.  This will prove the theorem.   Indeed, from (\ref{ie}),
 $\frac{1}{n} \tr{\omega}{\tilde{\omega}}$ is bounded from above by  the largest eigenvalue of $(\eta_{i\ov{j}})$.  Then
 $$\sup_M \tr{\omega}{\tilde{\omega}} \le n \sup_W \eta_{i\ov{j}} \xi^i \ov{\xi^{j}} \le
C e^{H(x_0, \xi_0)} = Ce^{Q(x_0)} \le C'K= C'(\sup_M |\nabla u|^2_g + 1),$$
as required.
Note then that we may assume, without loss of generality, that $\eta_{1\ov{1}}>>1$ and $u_{1\ov{1}}>0$ at $x_0$.

Define a tensor $\Theta^{i\ov{j}}$ by
 $$\Theta^{i\ov{j}} = \frac{1}{n-1} ( (\tr{\tilde{g}}{g})g^{i\ov{j}} - \tilde{g}^{i\ov{j}} )>0,$$ and a linear operator $L$, acting on functions,  by
 $$L(v) = \Theta^{i\ov{j}} v_{i\ov{j}} = \frac{1}{n-1} ( (\tr{\tilde{g}}{g}) \Delta v - \tilde{\Delta} v).$$
 We will compute $L(Q)$ and make use of the fact that at the point $x_0$, we have $L(Q) \le 0$.

Many of the computations from \cite{HMW} go through in a similar way.  As there, we use \emph{covariant derivatives} with respect to the K\"ahler metric  $g$, which we represent using subscripts. Note that since our coordinates are normal for $g$, we have that $\nabla \xi_0(x_0)=0$.  Then at $x_0$,
\begin{equation} \label{Qi}
0 = Q_i = \frac{\eta_{1\ov{1}i}}{\eta_{1\ov{1}}} + \varphi'  \left( \sum_p u_p u_{\ov{p}i} +  \sum_p u_{pi} u_{\ov{p}} \right) + \psi'  u_i,
\end{equation}
where of course $\varphi'$ and $\psi'$ are evaluated at $|\nabla u|^2_g$ and $u$ respectively.  Next, as in \cite{HMW},
\begin{equation}
\begin{split}
Q_{i\ov{j}} = & \frac{\eta_{1\ov{1} i\ov{j}}}{\eta_{1\ov{1}}}- \frac{\eta_{1\ov{1} i} \eta_{1\ov{1} \ov{j}}}{(\eta_{1\ov{1}})^2} +\xi^1_{i\ov{j}}+\ov{\xi^1_{\ov{i}j}} + \psi' u_{i\ov{j}} + \psi'' u_i u_{\ov{j}} \\
& + \varphi'' \left( \sum_p u_p u_{\ov{p} i} + \sum_p u_{pi} u_{\ov{p}} \right) \left( \sum_q u_{q\ov{j}} u_{\ov{q}}  + \sum_q u_q u_{\ov{q} \ov{j}} \right) \\
& + \varphi' \left( \sum_p u_{p\ov{j}} u_{\ov{p} i} + \sum_p u_{pi} u_{\ov{p} \ov{j}} \right) + \varphi' \left( \sum_p u_p u_{\ov{p} i \ov{j}} +  \sum_p u_{pi\ov{j}} u_{\ov{p}}   \right),
\end{split}
\end{equation}
where we write $(\xi^i)$ for the components of $\xi_0$.
Then at $x_0$, $(\Theta^{i\ov{j}})$ is diagonal, and so
\begin{equation}
\begin{split} \label{LQ}
L(Q) = & \sum_i \frac{\Theta^{i\ov{i}} \eta_{1\ov{1} i\ov{i}}}{\eta_{1\ov{1}}} - \sum_i \frac{\Theta^{i\ov{i}} | \eta_{1\ov{1}i}|^2}{(\eta_{1\ov{1}})^2}
+\sum_i\Theta^{i\ov{i}}(\xi^1_{i\ov{i}}+\ov{\xi^1_{\ov{i}i}})+ \psi' \sum_i \Theta^{i\ov{i}} u_{i\ov{i}} \\
&+ \psi'' \sum_i \Theta^{i\ov{i}} |u_i|^2 + \varphi'' \sum_i \Theta^{i\ov{i}} \left| \sum_p u_p u_{\ov{p} i} + \sum_p u_{pi} u_{\ov{p}} \right|^2 \\
&+ \varphi' \sum_{i,p} \Theta^{i\ov{i}} |u_{p \ov{i}}|^2  + \varphi' \sum_{i,p} \Theta^{i\ov{i}} |u_{pi}|^2 + \varphi' \sum_{i,p} \Theta^{i\ov{i}} (u_{pi\ov{i}} u_{\ov{p}} + u_{\ov{p} i \ov{i}} u_p),
\end{split}
\end{equation}
and a direct calculation shows that at $x_0$ we have $\xi^1_{i\ov{i}}=-\ov{\xi^1_{\ov{i}i}}=\frac{1}{2}\de_i\de_{\ov{i}}g_{1\ov{1}}$, and so at $x_0,$
$$\sum_i\Theta^{i\ov{i}}(\xi^1_{i\ov{i}}+\ov{\xi^1_{\ov{i}i}})=0.$$
Next we differentiate (covariantly) the equation
$$\log \frac{ \det \tilde{g} }{\det g} = F+b,$$
and we get
\begin{equation} \label{Fell}
\tilde{g}^{i\ov{j}} \nabla_{\ell} \tilde{g}_{i\ov{j}} = F_{\ell},
\end{equation}
and
$$\tilde{g}^{i\ov{j}} \nabla_{\ov{m}} \nabla_{\ell} \tilde{g}_{i\ov{j}} - \tilde{g}^{i\ov{q}} \tilde{g}^{p\ov{j}} \nabla_{\ov{m}} \tilde{g}_{p\ov{q}} \nabla_{\ell} \tilde{g}_{i\ov{j}} = F_{\ell \ov{m}}.$$
%\pagebreak[3]
For convenience, define $\hat{h}_{i\ov{j}}=(n-1)h_{i\ov{j}}$.
A short computation shows that the above equation can be written
\begin{equation}
\begin{split}
\lefteqn{\Theta^{i\ov{j}} u_{i \ov{j} \ell \ov{m}} + \tilde{g}^{i\ov{j}} \nabla_{\ov{m}} \nabla_{\ell} h_{i\ov{j}}} \\ & - \frac{1}{(n-1)^2} \tilde{g}^{i\ov{q}} \tilde{g}^{p\ov{j}} ( g_{p\ov{q}} g^{r\ov{s}} u_{r\ov{s} \ov{m}} - u_{p\ov{q} \ov{m}} + \nabla_{\ov{m}} \hat{h}_{p\ov{q}}) (g_{i\ov{j}} g^{a\ov{b}} u_{a\ov{b} \ell} - u_{i\ov{j}\ell}+ \nabla_{\ell} \hat{h}_{i\ov{j}}) =F_{\ell \ov{m}}.
\end{split}
\end{equation}
Hence we get at $x_0$
\begin{equation} \label{F11}
\begin{split}
\lefteqn{
\sum_i \Theta^{i\ov{i}} u_{i\ov{i}  1 \ov{1}} + \sum_i \tilde{g}^{i\ov{i}}  h_{i\ov{i}1\ov{1}}   } \\
 &- \frac{1}{(n-1)^2} \sum_{i,j} \tilde{g}^{i\ov{i}} \tilde{g}^{j\ov{j}} ( g_{j\ov{i}} \sum_a u_{a\ov{a} \ov{1}} - u_{j\ov{i} \ov{1}} + \hat{h}_{j\ov{i} \ov{1}}) (g_{i\ov{j}} \sum_b u_{b\ov{b} 1} - u_{i\ov{j}1} + \hat{h}_{i\ov{j} 1}) =F_{1 \ov{1}}.
 \end{split}
\end{equation}
Commuting covariant derivatives, we obtain (cf. \cite[p. 552]{HMW})
\begin{equation}  \label{commute}
\begin{split}
u_{i\ov{j} \ell} = {} & u_{i\ell \ov{j}} - u_a R_{i \ \ell \ov{j}}^{ \ a},\\
u_{i\ov{j} \ell \ov{m}} = {} & u_{\ell \ov{m} i\ov{j}} + u_{a\ov{j}} R_{i \ \ell \ov{m}}^{ \ a} - u_{a\ov{m}} R_{i \ \ell \ov{j}}^{\ a},
\end{split}
\end{equation}
and hence at the point $x_0$,
$$u_{i\ov{i} 1\ov{1}} = u_{1\ov{1} i\ov{i}} + \sum_a u_{a\ov{i}} R_{i \ov{a} 1 \ov{1}} - \sum_a u_{a\ov{1}} R_{i \ov{a} 1 \ov{i}}.$$
Then from (\ref{F11}) we have
\begin{equation}
\begin{split} \label{F112}
\lefteqn{\sum_i \Theta^{i\ov{i}} u_{ 1 \ov{1} i\ov{i} } +\sum_i \tilde{g}^{i\ov{i}}  h_{i\ov{i}1\ov{1}}  + \sum_{i} \Theta^{i\ov{i}} \left( \sum_a u_{a\ov{i}} R_{i \ov{a} 1 \ov{1}} - \sum_a u_{a\ov{1}} R_{i \ov{a} 1 \ov{i}}\right) } \\
&  - \frac{1}{(n-1)^2} \sum_{i,j} \tilde{g}^{i\ov{i}} \tilde{g}^{j\ov{j}} ( g_{j\ov{i}} \sum_a u_{a\ov{a} \ov{1}} - u_{j\ov{i} \ov{1}}+ \hat{h}_{j\ov{i}\ov{1}}) (g_{i\ov{j}} \sum_b u_{b\ov{b} 1} - u_{i\ov{j}1}+\hat{h}_{i\ov{j}1}) =F_{1 \ov{1}}.
\end{split}
\end{equation}
On the other hand, from the definition of $\eta_{i\ov{j}}$, we have
\begin{equation} \label{u11ii}
u_{1\ov{1} i\ov{i}} = \eta_{1\ov{1}i\ov{i}} +  \hat{h}_{1\ov{1} i\ov{i}} - (\tr{g}{h})_{i\ov{i}}.
\end{equation}

Combining (\ref{LQ}), (\ref{F112}) and (\ref{u11ii}) gives, at $x_0$, since $L(Q)\le 0$,
\begin{equation} \label{firstcombine}
\begin{split}
0 \ge {} & \frac{\sum_{i,j} \tilde{g}^{i\ov{i}} \tilde{g}^{j\ov{j}} ( g_{j\ov{i}} \sum_a u_{a\ov{a} \ov{1}} - u_{j\ov{i} \ov{1}}+ \hat{h}_{j\ov{i}\ov{1}}) (g_{i\ov{j}} \sum_b u_{b\ov{b} 1} - u_{i\ov{j}1} + \hat{h}_{i\ov{j}1})}{{(n-1)^2\eta_{1\ov{1}}}}   - \sum_i \frac{\Theta^{i\ov{i}} | \eta_{1\ov{1}i}|^2}{(\eta_{1\ov{1}})^2} \\
& + \frac{1}{\eta_{1\ov{1}}} \left(  F_{1\ov{1}} - \sum_{i} \Theta^{i\ov{i}} \left( \sum_a u_{a\ov{i}} R_{i \ov{a} 1 \ov{1}} - \sum_a u_{a\ov{1}} R_{i \ov{a} 1 \ov{i}}\right)  \right. \\
& \left. - \sum_i \tilde{g}^{i\ov{i}} h_{i\ov{i}1\ov{1}} - \sum_i \Theta^{i\ov{i}} \left(  \hat{h}_{1\ov{1} i\ov{i}} - (\tr{g}{h})_{i\ov{i}} \right) \right) \\
& + \psi' \sum_i \Theta^{i\ov{i}} u_{i\ov{i}}
+ \psi'' \sum_i \Theta^{i\ov{i}} |u_i|^2 + \varphi'' \sum_i \Theta^{i\ov{i}} \left| \sum_p u_p u_{\ov{p} i} + \sum_p u_{pi} u_{\ov{p}} \right|^2 \\
&+ \varphi' \sum_{i,p} \Theta^{i\ov{i}} |u_{p \ov{i}}|^2  + \varphi' \sum_{i,p} \Theta^{i\ov{i}} |u_{pi}|^2 + \varphi' \sum_{i,p} \Theta^{i\ov{i}} (u_{pi\ov{i}} u_{\ov{p}} + u_{\ov{p} i \ov{i}} u_p).
\end{split}
\end{equation}
Noting that at $x_0$,
\begin{equation} \label{sumT}
\sum_i \Theta^{i\ov{i}} = \tr{\tilde{g}}{g},
\end{equation}
and
$$| u_{i\ov{j}}| \le C \eta_{1\ov{1}}, \quad \textrm{for } i,j, =1, \ldots, n,$$
we have a lower bound for the second and third lines of (\ref{firstcombine}),
\begin{equation} \label{Rlb}
\begin{split}
\lefteqn{\frac{1}{\eta_{1\ov{1}}} \left(  F_{1\ov{1}} - \sum_{i} \Theta^{i\ov{i}}  \left( \sum_a u_{a\ov{i}} R_{i \ov{a} 1 \ov{1}} - \sum_a u_{a\ov{1}} R_{i \ov{a} 1 \ov{i}}\right)  \right. } \\ & \left. - \sum_i \tilde{g}^{i\ov{i}} h_{i\ov{i}1\ov{1}} - \sum_i \Theta^{i\ov{i}} \left( \hat{h}_{1\ov{1} i\ov{i}} - (\tr{g}{h})_{i\ov{i}} \right) \right) \ge  - C \tr{\tilde{g}}{g} - C.
\end{split}
\end{equation}
And
\begin{equation} \label{Thetauii}
\sum_i \Theta^{i\ov{i}} u_{i\ov{i}} = \frac{1}{n-1} ( (\tr{\tilde{g}}{g}) \Delta u - \tilde{\Delta} u) = n - \tr{\tilde{g}}{h},
\end{equation}
where the last equality can be seen by taking the trace with respect to $\tilde{g}$ of (\ref{tildeg1}).

Next, from (\ref{Fell}), a short computation shows that we have
$$\Theta^{i\ov{j}} u_{i\ov{j} \ell} + \tilde{g}^{i\ov{j}} h_{i\ov{j} \ell}= F_{\ell},$$
and at $x_0$ this gives,
$$\sum_i \Theta^{i\ov{i}} u_{i\ov{i} \ell} = F_{\ell} - \sum_i \tilde{g}^{i\ov{i}} h_{i\ov{i}\ell}.$$
Using (\ref{commute}), we get at $x_0$,
$$\sum_i \Theta^{i\ov{i}} u_{p i \ov{i}}= F_{p} - \sum_i \tilde{g}^{i\ov{i}} h_{i\ov{i}p} + \sum_{i, q} \Theta^{i\ov{i}} u_q R_{p \ov{q} i \ov{i}}.$$
On the other hand, $u_{\ov{i}i \ov{p}} = u_{i\ov{i} \ov{p}}= u_{\ov{p} i\ov{i}}$ and so
$$\sum_i \Theta^{i\ov{i}} u_{\ov{p}i\ov{i}} = F_{\ov{p}} - \sum_i \tilde{g}^{i\ov{i}} h_{i\ov{i} \ov{p}}.$$
Hence
\begin{equation}
\begin{split}
\lefteqn{ \varphi' \sum_{i,p} \Theta^{i\ov{i}} (u_{pi\ov{i}} u_{\ov{p}} + u_{\ov{p} i\ov{i}} u_p )  } \\={} & \varphi' \sum_p ( F_p u_{\ov{p}} + F_{\ov{p}} u_p)
- \varphi' \sum_{i,p} \tilde{g}^{i\ov{i}} (h_{i\ov{i}p} u_{\ov{p}} + h_{i\ov{i} \ov{p}} u_p)
\\
& + \varphi' \sum_{i,p,q} \Theta^{i\ov{i}} u_q u_{\ov{p}} R_{p\ov{q} i\ov{i}}  \\
 \ge {} & - (C + | \nabla u|^2_g)  | \varphi'| - C \tr{\tilde{g}}{g} (| \nabla u|^2_g+1)  | \varphi'|.
 \end{split}
\end{equation}
Now, as in \cite{HMW}, we have
\begin{equation} \label{hmwc1}
\begin{split}
&\frac{1}{2K} \ge  \varphi' \ge \frac{1}{4K} >0, \quad  \varphi'' =  2(\varphi')^2 >0
\end{split}
\end{equation}
and, for later use,
\begin{equation} \label{psidp}
\begin{split}
&\frac{A}{L} \ge  - \psi' \ge \frac{A}{2L} = C_1+1, \quad \psi'' \ge  \frac{2\ve}{1-\ve} (\psi')^2, \quad \textrm{for all } \ve \le \frac{1}{2A+1},
\end{split}
\end{equation}
whenever $\vp$ and $\psi$ are evaluated at $|\nabla u|^2_g$ and $u$ respectively (recall that $0 \le |\nabla u|^2_g \le K-1$ and $-L+1\le u \le 0$).
Then
\begin{equation} \label{phiplb}
\varphi' \sum_{i,p} \Theta^{i\ov{i}} (u_{pi\ov{i}} u_{\ov{p}} + u_{\ov{p} i\ov{i}} u_p )   \ge - C - C_0 \tr{\tilde{g}}{g}.
\end{equation}
Combining (\ref{firstcombine}) with (\ref{Rlb}), (\ref{Thetauii}) and (\ref{phiplb}), we obtain
\begin{equation} \label{secondcombine}
\begin{split}
0 \ge {} &  \frac{\sum_{i,j} \tilde{g}^{i\ov{i}} \tilde{g}^{j\ov{j}} ( g_{j\ov{i}} \sum_a u_{a\ov{a} \ov{1}} - u_{j\ov{i} \ov{1}}+ \hat{h}_{j\ov{i}\ov{1}}) (g_{i\ov{j}} \sum_b u_{b\ov{b} 1} - u_{i\ov{j}1} + \hat{h}_{i\ov{j}1})}{{(n-1)^2\eta_{1\ov{1}}}}   \\
&- \sum_i \frac{\Theta^{i\ov{i}} | \eta_{1\ov{1}i}|^2} {(\eta_{1\ov{1}})^2} + \psi'' \sum_i \Theta^{i\ov{i}} |u_i|^2 + \varphi'' \sum_i \Theta^{i\ov{i}} \left| \sum_p u_p u_{\ov{p} i} + \sum_p u_{pi} u_{\ov{p}} \right|^2 \\
& + \varphi' \sum_{i,p} \Theta^{i\ov{i}} |u_{pi}|^2 + \varphi' \sum_i \Theta^{i\ov{i}} u_{i\ov{i}}^2 + \tr{\tilde{g}}{h} ( - \psi' - C_1) - C,
\end{split}
\end{equation}
and this now fixes the constant $C_1$.
Note that we have used the fact that $\tr{\tilde{g}}{h}$ and $\tr{\tilde{g}}{g}$ are uniformly equivalent.
We define
$$\delta = \frac{1}{1+2A}= \frac{1}{1+ 4L(C_1+1)},$$
and notice that $\delta$ is a uniform constant since we know that $L$ and $C_1$ are bounded.
We consider two cases, which are analogous to the two cases in \cite{HMW}.

\bigskip
\noindent
{\bf Case 1. \ Assume $\lambda_2 \le (1-\delta) \lambda_n$}.  We make use of (\ref{Qi}) and (\ref{sumT}) to see that
\begin{equation}
\begin{split} \label{ft}
- \sum_i \frac{\Theta^{i\ov{i}} | \eta_{1\ov{1}i}|^2}{(\eta_{1\ov{1}})^2} = {} & - \sum_i \Theta^{i\ov{i}} \left| \varphi'  \left( \sum_p u_p u_{\ov{p}i} +  \sum_p u_{pi} u_{\ov{p}} \right) + \psi'  u_i \right|^2 \\
\ge {} &  - 2 (\varphi')^2 \sum_i \Theta^{i\ov{i}} \left| \sum_p u_p u_{\ov{p}i} +  \sum_p u_{pi} u_{\ov{p}} \right|^2 - 2 (\psi')^2 K \tr{\tilde{g}}{g} \\
\ge {} &  - 2 (\varphi')^2 \sum_i \Theta^{i\ov{i}} \left| \sum_p u_p u_{\ov{p}i} +  \sum_p u_{pi} u_{\ov{p}} \right|^2 - 8 (C_1+1)^2 K\tr{\tilde{g}}{g},
\end{split}
\end{equation}
since $| \psi'| \le A/L = 2(C_1+1)$.  Then, using   $\varphi'' = 2(\varphi')^2$ and $\psi''>0$ and $\varphi'>0$ and $ -\psi' - C_1 \ge 1$ we obtain from (\ref{secondcombine}),
\begin{equation}
\begin{split}
0 \ge {} & \varphi' \sum_i \Theta^{i\ov{i}} u_{i\ov{i}}^2 - 8(C_1+1)^2 K \tr{\tilde{g}}{g} - C,
\end{split}
\end{equation}
but since $\varphi' \ge \frac{1}{4K}$ we have
\begin{equation}
\begin{split}
0 \ge {} & \frac{1}{4K} \sum_i \Theta^{i\ov{i}} u_{i\ov{i}}^2 - 8(C_1+1)^2 K \tr{\tilde{g}}{g} - C,
\end{split}
\end{equation}
giving
\begin{equation}
\begin{split}
 \frac{1}{4K} \Theta^{n\ov{n}} u_{n\ov{n}}^2 \le 8(C_1+1)^2 K \tr{\tilde{g}}{g} + C.
\end{split}
\end{equation}

Now by the definition of $\Theta^{i\ov{i}}$ and the fact that $\tilde{g}_{n\ov{n}} \ge \cdots \ge \tilde{g}_{1\ov{1}}$ at $x_0$, we see that
$$\Theta^{n\ov{n}} \ge \cdots \ge \Theta^{1\ov{1}}>0.$$
Then it follows that $$\Theta^{n\ov{n}} \ge \frac{1}{n} \sum_i \Theta^{i\ov{i}} = \frac{1}{n} \tr{\tilde{g}}{g}.$$
Hence
\begin{equation} \label{goodone}
\frac{1}{n} (\tr{\tilde{g}}{g}) u_{n\ov{n}}^2 \le 32(C_1+1)^2 K^2 \tr{\tilde{g}}{g} + CK.
\end{equation}
But the assumption $\lambda_2 \le (1-\delta) \lambda_n$ together with (\ref{defneta}), implies that
\[
\begin{split}
u_{n\ov{n}} \le {} & \sum_{i=1}^n \lambda_i - (n-1) \lambda_n + C \\
\le {} & \lambda_1+\lambda_2+(n-2)\lambda_n - (n-1) \lambda_n + C\\
\le {} & \lambda_1 - \delta \lambda_n +C  \\
\le {} & - \delta \lambda_n +C'\\
\le {} & -\frac{\delta}{2} \lambda_n,
\end{split}
\]
where we use the following: since $\lambda_1 \le \cdots \le \lambda_n$ and the equation gives us $\lambda_1 \cdots \lambda_n = e^{\tilde{F}}$ (recall that $\tilde{F}=F+b$ is bounded), we may assume without loss of generality that $\lambda_1 << 1$, say, and $\lambda_n$ is large compared to $C'/\delta$.

Hence we have $u_{n\ov{n}}^2 \ge \frac{\delta^2}{4} \lambda_n^2$
which from (\ref{goodone}) gives the uniform bound
$$\lambda_n \le C K.$$
By (\ref{ie}), this  implies the desired estimate $\eta_{1\ov{1}} \le C'K$.

\bigskip
\noindent
{\bf Case 2. \ Assume $\lambda_2 \ge (1-\delta) \lambda_n$}.
First we look at the summand $i=1$  in the bad term $- \sum_i \frac{\Theta^{i\ov{i}} | \eta_{1\ov{1}i}|^2} {(\eta_{1\ov{1}})^2} $ in  (\ref{secondcombine}).  We compute as in (\ref{ft}),
\begin{equation}
\begin{split}
- \frac{\Theta^{1\ov{1}} | \eta_{1\ov{1} 1}|^2}{\eta_{1\ov{1}}} \ge {} & - 2 (\varphi')^2  \Theta^{1\ov{1}} \bigg| \sum_p u_p u_{\ov{p} 1} + \sum_p u_{p1} u_{\ov{p}} \bigg|^2 - 2 (\psi')^2  \Theta^{1\ov{1}} |u_1|^2 \\
\ge {} & - 2 (\varphi')^2  \Theta^{1\ov{1}} \bigg|  \sum_p u_p u_{\ov{p} 1}+ \sum_p u_{p1} u_{\ov{p}} \bigg|^2 - 8(C_1+1)^2 n  K \Theta^{1\ov{1}},
\end{split}
\end{equation}
Then, using (\ref{hmwc1}) and (\ref{psidp}), and in particular the inequality $-\psi' - C_1\ge 1$ we obtain from (\ref{secondcombine}),
\begin{equation} \label{ft2}
\begin{split}
0 \ge {} & \frac{\sum_{i,j} \tilde{g}^{i\ov{i}} \tilde{g}^{j\ov{j}} ( g_{j\ov{i}} \sum_a u_{a\ov{a} \ov{1}} - u_{j\ov{i} \ov{1}} + \hat{h}_{j\ov{i} \ov{1}}) (g_{i\ov{j}} \sum_b u_{b\ov{b} 1} - u_{i\ov{j}1} + \hat{h}_{i\ov{j}1} )}{{(n-1)^2\eta_{1\ov{1}}}}    \\
&{} - \sum_{i =2}^n \frac{\Theta^{i\ov{i}} | \eta_{1\ov{1}i}|^2}{(\eta_{1\ov{1}})^2}+ \psi'' \sum_i \Theta^{i\ov{i}} |u_i|^2 + \varphi'' \sum_{i =2}^n \Theta^{i\ov{i}} \left| \sum_p u_p u_{\ov{p} i} + \sum_p u_{pi} u_{\ov{p}} \right|^2 \\
& + \varphi' \sum_{i,p} \Theta^{i\ov{i}} |u_{pi}|^2 + \frac{1}{4K} \sum_i \Theta^{i\ov{i}} u_{i\ov{i}}^2 + \frac{1}{C_0}\tr{\tilde{g}}{g}
 - 8(C_1+1)^2 n  K \Theta^{1\ov{1}} -C,
\end{split}
\end{equation}
for a uniform $C_0>0$.
Now without loss of generality, we may assume that
\begin{equation} \label{ea}
\frac{1}{4K} \sum_i \Theta^{i\ov{i}} u_{i\ov{i}}^2 \ge 8(C_1+1)^2 n  K \Theta^{1\ov{1}},
\end{equation}
since if not then we get the upper bound $u_{1\ov{1}} \le CK$, which implies from (\ref{defneta})  the uniform bound $\eta_{1\ov{1}} \le C K$ that we want.
The following is a key lemma:

\begin{lemma} \label{klemma} We have, at $x_0$,
\begin{equation} \nonumber
\begin{split}
& \frac{\sum_{i,j} \tilde{g}^{i\ov{i}} \tilde{g}^{j\ov{j}} ( g_{j\ov{i}} \sum_a u_{a\ov{a} \ov{1}} - u_{j\ov{i} \ov{1}}+ \hat{h}_{j\ov{i} \ov{1}}) (g_{i\ov{j}} \sum_b u_{b\ov{b} 1} - u_{i\ov{j}1} + \hat{h}_{i\ov{j}1} )}{{(n-1)^2 \eta_{1\ov{1}}}}   - \sum_{i =2}^n \frac{\Theta^{i\ov{i}} | \eta_{1\ov{1}i}|^2}{(\eta_{1\ov{1}})^2} \\
&{} + \psi'' \sum_i \Theta^{i\ov{i}} |u_i|^2 + \varphi'' \sum_{i =2}^n \Theta^{i\ov{i}} \left| \sum_p u_p u_{\ov{p} i} + \sum_p u_{pi} u_{\ov{p}} \right|^2   \ge - \frac{1}{2C_0} \emph{tr}_{\tilde{g}}{g}.
\end{split}
\end{equation}
\end{lemma}

Given the lemma we're done: indeed just combine (\ref{ft2}), (\ref{ea}) with the result of the lemma and we obtain at $x_0$,
$$0 \ge \frac{1}{2C_0}\tr{\tilde{g}}{g} - C$$
and so  $\tr{\tilde{g}}{g} \le C$ at this point.  Hence $\tilde{g}$ is bounded from above and below at this point (since $\tilde{g}$ has bounded determinant).  Then $\eta_{1\ov{1}}$ is uniformly bounded from above by (\ref{ie}) and we're done.

\begin{proof}[Proof of Lemma \ref{klemma}]  First, observe that applying again (\ref{Qi}) and the fact that $\varphi'' = 2(\varphi')^2$,
\begin{equation} \label{uno}
\begin{split}
\varphi'' \sum_{i =2}^n \Theta^{i\ov{i}} \left| \sum_p u_p u_{\ov{p} i} + \sum_p u_{pi} u_{\ov{p}} \right|^2 = {} & 2 \sum_{i =2}^n \Theta^{i\ov{i}} \left| \frac{\eta_{1\ov{1} i}}{\eta_{1\ov{1}}} + \psi' u_i \right|^2 \\
\ge {} & 2 \delta \sum_{i = 2}^n \frac{\Theta^{i\ov{i}} |\eta_{1\ov{1}i}|^2}{(\eta_{1\ov{1}})^2} - \frac{2\delta (\psi')^2}{1-\delta} \sum_{i = 2}^n \Theta^{i\ov{i}} |u_i|^2,
\end{split}
\end{equation}
where for the last line we have used the elementary proposition (Proposition 2.3 in \cite{HMW}) that for any $a, b \in \mathbb{C}^n$ and any $0<\delta'<1$ we have
$$|a+b|^2 \ge \delta' |a|^2 - \frac{\delta'}{1-\delta'} |b|^2.$$
But recall that we defined $\delta = \frac{1}{1+2A}$ and hence from (\ref{psidp}) we have
\begin{equation} \label{dos}
 \frac{2\delta (\psi')^2}{1-\delta} \sum_{i = 2}^n \Theta^{i\ov{i}} |u_i|^2 \le \psi'' \sum_{i = 2}^n \Theta^{i\ov{i}} |u_i|^2 \le \psi'' \sum_{i=1}^n \Theta^{i\ov{i}} |u_i|^2.
 \end{equation}
Combining (\ref{uno}) and (\ref{dos}) we have
\begin{equation} \nonumber
\begin{split}
& \frac{\sum_{i,j} \tilde{g}^{i\ov{i}} \tilde{g}^{j\ov{j}} ( g_{j\ov{i}} \sum_a u_{a\ov{a} \ov{1}} - u_{j\ov{i} \ov{1}}+ \hat{h}_{j\ov{i} \ov{1}}) (g_{i\ov{j}} \sum_b u_{b\ov{b} 1} - u_{i\ov{j}1} + \hat{h}_{i\ov{j}1})}{{(n-1)^2\eta_{1\ov{1}}}}  \\ &{} - \sum_{i =2}^n \frac{\Theta^{i\ov{i}} | \eta_{1\ov{1}i}|^2}{(\eta_{1\ov{1}})^2}
 + \psi'' \sum_i \Theta^{i\ov{i}} |u_i|^2 + \varphi'' \sum_{i =2}^n \Theta^{i\ov{i}} \left| \sum_p u_p u_{\ov{p} i} + \sum_p u_{pi} u_{\ov{p}} \right|^2  \\
  \ge {} & \frac{\sum_{i,j} \tilde{g}^{i\ov{i}} \tilde{g}^{j\ov{j}} ( g_{j\ov{i}} \sum_a u_{a\ov{a} \ov{1}} - u_{j\ov{i} \ov{1}} + \hat{h}_{j\ov{i} \ov{1}} ) (g_{i\ov{j}} \sum_b u_{b\ov{b} 1} - u_{i\ov{j}1} + \hat{h}_{i\ov{j} 1})}{{(n-1)^2\eta_{1\ov{1}}}} \\
& {} - (1-2\delta) \sum_{i =2}^n \frac{\Theta^{i\ov{i}} | \eta_{1\ov{1}i}|^2}{(\eta_{1\ov{1}})^2}.
\end{split}
\end{equation}
Hence it is sufficient to show that, at a maximum point of $Q$, the right hand side of the above inequality is bounded below by $-\frac{1}{2C_0} \tr{\tilde{g}}{g}$.
In fact, we will show the stronger inequality
\begin{equation} \label{keyi2}
\begin{split}
\lefteqn{\frac{\sum_{i=2}^n \tilde{g}^{i\ov{i}} \tilde{g}^{1\ov{1}} ( g_{1\ov{i}} \sum_a u_{a\ov{a} \ov{1}} - u_{1\ov{i} \ov{1}}+ \hat{h}_{1\ov{i} \ov{1}} ) (g_{i\ov{1}} \sum_b u_{b\ov{b} 1} - u_{i\ov{1}1} + \hat{h}_{i\ov{1}1})}{{(n-1)^2\eta_{1\ov{1}}}} } \qquad \qquad \qquad \qquad \\
& - (1-2\delta) \sum_{i =2}^n \frac{\Theta^{i\ov{i}} | \eta_{1\ov{1}i}|^2}{(\eta_{1\ov{1}})^2} \ge  - \frac{1}{2C_0} \tr{\tilde{g}}{g}. \qquad \qquad
\end{split}
\end{equation}
First note that $u_{1\ov{i} \ov{1}}= u_{1 \ov{1} \ov{i}}$ and $u_{i\ov{1}1} = u_{\ov{1}1 i}$. Furthermore, at $x_0$ we have $g_{1\ov{i}}=0$ for $i=2, 3, \ldots, n$. Then
\begin{equation} \label{keyi3}
\begin{split}
\frac{\sum_{i=2}^n \tilde{g}^{i\ov{i}} \tilde{g}^{1\ov{1}} ( g_{1\ov{i}} \sum_a u_{a\ov{a} \ov{1}} - u_{1\ov{i} \ov{1}}+ \hat{h}_{1\ov{i} \ov{1}} ) (g_{i\ov{1}} \sum_b u_{b\ov{b} 1} - u_{i\ov{1}1} + \hat{h}_{i\ov{1}1})}{{(n-1)^2\eta_{1\ov{1}}}}  \\
=  \frac{\sum_{i=2}^n \tilde{g}^{i\ov{i}} \tilde{g}^{1\ov{1}} |  u_{1\ov{1} \ov{i}}- \hat{h}_{1\ov{i} \ov{1}}|^2}{{(n-1)^2\eta_{1\ov{1}}}}. \qquad \qquad \qquad \qquad
\end{split}
\end{equation}
Next
from the definition of $\eta_{i\ov{j}}$ we see that we can write
$$u_{1\ov{1}\ov{i}} - \hat{h}_{1\ov{i}\ov{1}} = \eta_{1\ov{1}\ov{i}} + e_{1\ov{1}\ov{i}},$$
where $e_{i\ov{j} \ov{k}}$ are the components of a uniformly bounded tensor.
Hence
\begin{equation} \label{keyi4}
\begin{split}
\lefteqn{\frac{\sum_{i=2}^n \tilde{g}^{i\ov{i}} \tilde{g}^{1\ov{1}} |  u_{1\ov{1} \ov{i}}- \hat{h}_{1\ov{i} \ov{1}}|^2}{{(n-1)^2\eta_{1\ov{1}}}} } \\
\ge {} & (1-\delta/2) \frac{\sum_{i=2}^n \tilde{g}^{i\ov{i}} \tilde{g}^{1\ov{1}} | \eta_{1\ov{1} \ov{i}}|^2}{{(n-1)^2\eta_{1\ov{1}}}}
 - C_{\delta}  \frac{\sum_{i=2}^n  \tilde{g}^{i\ov{i}} \tilde{g}^{1\ov{1}} |e_{1\ov{1} \ov{i}}|^2 }{{(n-1)^2\eta_{1\ov{1}}}},
\end{split}
\end{equation}
for a constant $C_{\delta}$ depending only on $\delta$.

On the other hand, we have $\lambda_2 \ge (1-\delta)\lambda_n$ and hence $\tilde{g}^{i\ov{i}}\le \frac{1}{(1-\delta) \lambda_n}$ for $i=2, 3, \ldots, n$.  Since we may assume without loss of generality that $\lambda_n$ is large compared to $C_{\delta}$, we have
\begin{equation} \label{ebd}
C_{\delta}  \frac{\sum_{i=2}^n  \tilde{g}^{i\ov{i}} \tilde{g}^{1\ov{1}} |e_{1\ov{1} \ov{i}}|^2 }{{(n-1)^2\eta_{1\ov{1}}}} \le \frac{1}{2C_0} \tr{\tilde{g}}{g}.
\end{equation}
Then combining (\ref{keyi3}), (\ref{keyi4}) and (\ref{ebd}), to prove (\ref{keyi2}) it suffices to show that
\begin{equation}
\begin{split}
(1-\delta/2) \frac{\sum_{i=2}^n \tilde{g}^{i\ov{i}} \tilde{g}^{1\ov{1}}  | \eta_{1\ov{1} \ov{i}}|^2}{{(n-1)^2\eta_{1\ov{1}}}}  \ge (1- 2\delta) \sum_{i =2}^n \frac{\Theta^{i\ov{i}} |\eta_{1\ov{1}i}|^2}{(\eta_{1\ov{1}})^2},
 \end{split}
\end{equation}
and it is enough to prove
\begin{equation} \label{keyiii}
\begin{split}
\frac{\sum_{i=2}^n \tilde{g}^{i\ov{i}} \tilde{g}^{1\ov{1}}  | \eta_{1\ov{1} \ov{i}}|^2}{{(n-1)^2 \eta_{1\ov{1}} }}  \ge \left(1- \frac{3}{2}\delta\right) \sum_{i =2}^n \frac{\Theta^{i\ov{i}} |\eta_{1\ov{1}i}|^2}{(\eta_{1\ov{1}})^2},
 \end{split}
\end{equation}

The rest of the proof consists of proving this inequality (\ref{keyiii}).
In fact, we will prove for each $i=2, 3, \ldots, n$,
\begin{equation} \label{stronger1}
\left(1- \frac{3}{2}\delta\right) \frac{\Theta^{i\ov{i}}}{\eta_{1\ov{1}}} \le \frac{1}{(n-1)^2} \tilde{g}^{i\ov{i}} \tilde{g}^{1\ov{1}},
\end{equation}
which implies (\ref{keyiii}).

Phrasing (\ref{stronger1}) in terms of the $\lambda_i$, making use of (\ref{etalambda}), we see that it suffices to prove, for $i=2, \ldots, n$,
\begin{equation} \nonumber
\frac{1-3\delta/2}{\lambda_2+\cdots + \lambda_n - (n-2)\lambda_1}  \sum_{j \neq i} \frac{1}{\lambda_j} \le \frac{1}{(n-1) \lambda_i \lambda_1}.
\end{equation}
But notice that since $\lambda_n \ge \lambda_{n-1} \ge \cdots \ge \lambda_1>0$, it's enough to prove this for $i=n$, namely
\begin{equation} \label{suffices1}
\frac{1-3\delta/2}{\lambda_2+\cdots + \lambda_n - (n-2)\lambda_1}  \left( \frac{1}{\lambda_1} + \sum_{j=2}^{n-1} \frac{1}{\lambda_j} \right) \le \frac{1}{(n-1) \lambda_n \lambda_1}.
\end{equation}
From the assumption $\lambda_2 \ge (1-\delta)\lambda_n$, the eigenvalues  $\lambda_2, \ldots, \lambda_n$ are all large compared to $\lambda_1$ and so we may assume without loss of generality that
\begin{equation} \label{2assume21}
\frac{1}{\lambda_1} + \sum_{j=2}^{n-1} \frac{1}{\lambda_j} \le (1+\ve) \frac{1}{\lambda_1},
\end{equation}
for a  small $\ve>0$, depending only on $\delta$, which will be chosen later.
Next, again from the assumption that $\lambda_2 \ge (1-\delta)\lambda_n$, and assuming without loss of generality that $(n-2) \lambda_1 \le \delta \lambda_n$,
we have
\begin{equation} \label{claimlambda1}
\begin{split}
\lambda_2+\cdots + \lambda_n - (n-2) \lambda_1 \ge {} & (n-2) (1-\delta) \lambda_n + \lambda_n - \delta \lambda_n \\
= {} & (n-1)(1-\delta) \lambda_n.
\end{split}
\end{equation}
From (\ref{2assume21}), (\ref{claimlambda1}) we see that to prove (\ref{suffices1}), it suffices to show
\begin{equation}
\frac{(1-3\delta/2)(1+\ve)}{1-\delta} \le 1.
\end{equation}
But we can arrange this by choosing
$$\ve = \frac{\delta}{2-3\delta}>0.$$
This proves (\ref{suffices1}) and hence (\ref{keyiii}), completing the proof of the lemma.
\end{proof}

This completes the proof of Theorem \ref{theoremC2}.
\end{proof}

\begin{remark} It would be interesting to know whether $\tr{\omega}{\tilde{\omega}}$ can be directly bounded in terms of $u$.  By comparison, in the case of the usual complex Monge-Amp\`ere equation on Hermitian manifolds the estimate
$$\tr{\omega}{\tilde{\omega}} \le C e^{A(u - \inf_M u)}$$
was proved in \cite{TW3} by  adapting a trick of Phong-Sturm \cite{PS, PS2}.
\end{remark}

\section{First order estimate} \label{foe}

In this section we prove the following result.

\begin{theorem}\label{grad}
In the setting of Theorem \ref{maintheorem}, let $u$ solve the equation (\ref{MA1}).
Then there is a constant $C$ which depends only on
$\|F\|_{C^2(M,g)}$ (as well as the fixed data $M,\omega,\omega_h$), such that
\begin{equation}\label{gradest}
\sup_M |\nabla u|_g\leq C.
\end{equation}
\end{theorem}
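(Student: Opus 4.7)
The approach is a contradiction-and-blow-up argument, exactly in the spirit of the Dinew--Ko\l odziej treatment of the complex Hessian equations. If (\ref{gradest}) failed, there would exist a sequence of solutions $u_k$ of (\ref{MA1}) (with data $F_k$ in a fixed bounded set) and points $x_k\in M$ with $M_k:=|\nabla u_k(x_k)|_g=\sup_M|\nabla u_k|_g\to\infty$. Choose holomorphic coordinates $z^1,\dots,z^n$ centered at $x_k$ with $g_{i\bar j}(x_k)=\delta_{ij}$ and rescale by setting
\[
v_k(w):= u_k\bigl(M_k^{-1}w\bigr),\qquad w\in B_{\rho M_k}\subset\mathbb{C}^n.
\]
Writing $\hat g_k(w):=g(M_k^{-1}w)$, one has $\hat g_k\to\delta$ locally uniformly; the chain rule together with the choice of $M_k$ gives $|\nabla v_k|_{\hat g_k}\le 1$ with equality at $w=0$; Theorem \ref{theoremC0} supplies $\|v_k\|_{L^\infty}\le C$; and the second-order estimate of Theorem \ref{theoremC2}, combined with $|\nabla u_k|^2\le M_k^2$, translates after rescaling into $|v_{k,i\bar j}|\le C$ uniformly. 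Hence $(v_k)$ is bounded in $C^{1,1}_{\mathrm{loc}}(\mathbb{C}^n)$.

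By Arzel\`a--Ascoli together with a diagonal argument I pass to a subsequence converging locally on $\mathbb{C}^n$ to a limit $v_\infty\in W^{2,\infty}_{\mathrm{loc}}\cap L^\infty(\mathbb{C}^n)$ with $|\nabla v_\infty(0)|=1$ (so $v_\infty$ is nonconstant) and which remains $(n-1)$-plurisubharmonic in a weak sense, the latter being a linear inequality on the complex Hessian that passes to weak-$*$ limits of $C^{1,1}$-bounded smooth $(n-1)$-PSH functions. Tracking the scaling of (\ref{MA1}) under $u_{i\bar j}=M_k^2\,v_{k,i\bar j}\circ(z\mapsto w)$ shows that the $\omega_h$ contribution to the defining matrix $\tilde g_{i\bar j}$ of (\ref{tildeg1}) is of order $M_k^{-2}$, while after dividing both sides of $\det\tilde g=e^{F+b}\det g$ by $M_k^{2n}$ the right-hand side is of order $e^{F+b}M_k^{-2n}$. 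Both vanish as $k\to\infty$, so $v_\infty$ solves the homogeneous Monge-Amp\`ere equation
\[
\bigl((\Delta v_\infty)\beta-\ddbar v_\infty\bigr)^n=0\qquad\text{on }\mathbb{C}^n,
\]
interpreted weakly.

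The plan is then to invoke a Liouville theorem for $(n-1)$-PSH functions: any bounded weak solution of the above homogeneous equation on $\mathbb{C}^n$ must be constant, which contradicts $|\nabla v_\infty(0)|=1$ and finishes the proof of (\ref{gradest}). The main obstacle is precisely this Liouville theorem, and in particular making sense of the operator $v\mapsto((\Delta v)\beta-\ddbar v)^n$ for merely $W^{2,\infty}_{\mathrm{loc}}$ (let alone upper-semicontinuous) $(n-1)$-PSH functions: as emphasised in the introduction, $(n-1)$-plurisubharmonicity is strictly weaker than plurisubharmonicity when $n\ge3$, so the Bedford--Taylor pluripotential theory is not available off the shelf and the requisite operator theory must be built by hand. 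Once this is done, the Liouville proof can follow the Dinew--Ko\l odziej template: test the homogeneous equation against suitable PSH barriers on large balls, use the boundedness of $v_\infty$ to force the Monge-Amp\`ere mass to vanish, and upgrade this via a comparison/convexification argument to conclude that $v_\infty$ is constant.
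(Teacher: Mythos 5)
Your blow-up setup is essentially the paper's: assume the estimate fails, rescale at the maximum point of the gradient by the factor $C_j=\sup_M|\nabla u_j|_g$, use Theorems \ref{theoremC0} and \ref{theoremC2} to get uniform $C^0$, gradient and complex Hessian bounds for the rescaled functions, pass to a nonconstant limit which is $(n-1)$-PSH, and observe that both the $\omega_h$-term and the right-hand side of \eqref{MA1} are killed by the scaling. (One technical slip: a bound on $\ddbar \hat u_j$ does \emph{not} give a $C^{1,1}_{\mathrm{loc}}$ bound, since the full real Hessian is not controlled pointwise by the complex Hessian; the correct route, as in the paper, is elliptic $L^p$ theory for $\Delta$ giving $W^{2,p}_{\mathrm{loc}}$ and $C^{1,\alpha}_{\mathrm{loc}}$ bounds, which is all one needs.)

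The genuine gap is at the heart of the argument: you formulate the limit as a ``bounded weak solution of $((\Delta v_\infty)\beta-\ddbar v_\infty)^n=0$'' and then propose a Liouville theorem proved by ``forcing the Monge-Amp\`ere mass to vanish.'' But for $(n-1)$-PSH functions with $n\ge 3$ there is no available Bedford--Taylor-type definition of this Monge-Amp\`ere measure for nonsmooth functions -- you acknowledge this, yet your sketch still presupposes it, so the central step is deferred rather than carried out. The paper's resolution is different and is the real content of Section \ref{foe}: it never defines $P(u)^n$ weakly, but instead replaces the homogeneous equation by the notion of \emph{maximality}, which requires no operator theory and is preserved under locally uniform limits (Lemma \ref{converg}); maximality of the blow-up limit is deduced from the smooth approximants $\hat u_j$, for which $P(\hat u_j)^n\to 0$ uniformly, via the comparison principle for smooth $(n-1)$-PSH functions (Lemma \ref{comparison}) and small perturbations by $\ve|z|^2$. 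The Liouville Theorem \ref{liouv} itself is then proved for bounded, Lipschitz, \emph{maximal} $(n-1)$-PSH functions by induction on dimension, following the Dinew--Ko{\l}odziej dichotomy with averages and the Harnack inequality, the inequality $P(u^2)\geq 2|\nabla u|^2\beta-2\mn\de u\wedge\db u$, and a dimension-reduction lemma (Lemma \ref{maximal}) whose proof hinges on the fact that $|z^1|^2+\cdots+|z^{n-1}|^2-|z^n|^2$ is $(n-1)$-PSH though not PSH -- this is exactly the point where $(n-1)$-plurisubharmonicity is exploited and where a naive transplant of pluripotential arguments would fail. Without the maximality formulation and this induction scheme (or some genuine substitute for the missing operator theory), your proposal does not yet constitute a proof.
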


Using a blow-up argument we will prove this result by establishing a Liouville-type theorem, following the proof by Dinew-Ko\l odziej \cite{DK} of an analogous result for the complex Hessian equations.  Note that our argument here can be carried over to the more general  case of equation (\ref{eco}) for Hermitian  $\omega$.

First, we need some notation.
Let $\Omega\subset\mathbb{C}^n, n\geq 2,$ be a domain (possibly the whole of $\mathbb{C}^n$) and $u:\Omega\to\mathbb{R}\cup\{-\infty\}$ be an upper semicontinuous function which is in $L^1_{\mathrm{loc}}(\Omega)$. If
$$P(u):=\frac{1}{n-1}\left((\Delta u)\beta-\ddbar u\right)\geq 0,$$
as a real $(1,1)$ current, we will say that $u$ is $(n-1)$-PSH. Here $\beta$ is the Euclidean K\"ahler form on $\mathbb{C}^n$ and $\Delta u$ is its Laplacian.
Taking the trace of $P(u)\geq 0$ with respect to $\beta$, we see in particular that $u$ is subharmonic.

Harvey-Lawson (see e.g. \cite[Theorem 9.2]{HL}) have shown that, up to modifying $u$ on a set of measure zero, this is equivalent to requiring that given any affine linear $(n-1)$-dimensional complex subspace $H\subset\mathbb{C}^n$, we have that
$u|_{\Omega\cap H}$ is subharmonic.
In this section, we will always assume that our $(n-1)$-PSH functions are continuous (and with values in $\mathbb{R}$). This is enough for our purposes, and it is not much harder to remove this assumption and develop the basic theory below.

In contrast with the case of the complex Monge-Amp\`ere operator \cite{BT}, it is not clear in general how to define
$P(u)^n$ in the sense of pluripotential theory. However, there is a suitable substitute for the condition that $P(u)^n=0$, which is preserved under uniform limits, namely the notion of maximality.

Let us say that a continuous $(n-1)$-PSH function $u$ is \emph{maximal} if given any relatively compact open set $\Omega'\Subset \Omega$ and any continuous $(n-1)$-PSH function
$v$ on a domain $\Omega''$ with $\Omega'\Subset\Omega''\Subset\Omega$ and with $v\leq u$ on $\de\Omega'$, then we must have that $v\leq u$ on $\Omega'$.

Note that if $u$ is a continuous $(n-1)$-PSH and $\chi_\ve$ is a family of mollifiers, we can define $u_\ve=u*\chi_\ve$. These are functions in $C^\infty(\Omega_\ve)$, where $\Omega_\ve=\{x\in \Omega\ |\ d(x,\de\Omega)>\ve\}$, which are also $(n-1)$-PSH since $P(u_\ve)=P(u)*\chi_\ve\geq 0$, and $u_\ve\to u$ locally uniformly as $\ve\to 0$.

The key result that we need is the following:
\begin{theorem}\label{liouv}
If $u:\mathbb{C}^n\to \mathbb{R}$ is an $(n-1)$-PSH function in $\mathbb{C}^n$ which is Lipschitz continuous,
maximal and satisfies $$\sup_{\mathbb{C}^n}(|u|+|\nabla u|)<\infty,$$
then $u$ is a constant.
\end{theorem}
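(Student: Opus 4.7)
The strategy is by contradiction, via a blow-up combined with a strong maximum principle for $(n-1)$-PSH functions. The first step I would establish is the following strong maximum principle: any continuous $(n-1)$-PSH function $w$ on $\mathbb{C}^n$ which attains its supremum at an interior point must be constant. By the Harvey-Lawson characterization recalled just before the theorem, $w$ restricted to any affine complex hyperplane $H$ through the maximum point $z_0$ is subharmonic on $H$ (of complex dimension $n-1 \ge 1$) and attains its maximum at the interior point $z_0$, hence is constant on $H$ by the classical strong maximum principle. Since every $z\in\mathbb{C}^n$ lies on some affine hyperplane through $z_0$, this forces $w\equiv w(z_0)$.

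Suppose now for contradiction that $u$ is non-constant and let $M:=\sup_{\mathbb{C}^n}u$. By the previous paragraph $M$ is not attained, so there exist $z_k\in\mathbb{C}^n$ with $|z_k|\to\infty$ and $u(z_k)\to M$. I would then consider the translates $v_k(z):=u(z+z_k)$, which are $(n-1)$-PSH, maximal (both properties being translation invariant), uniformly Lipschitz with constant $\sup|\nabla u|$, and uniformly bounded. By Arzel\`a-Ascoli, a subsequence converges locally uniformly to a continuous limit $v$ which is still $(n-1)$-PSH (as a local uniform limit), Lipschitz, bounded, and satisfies $v(0)=M\ge v$ everywhere. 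The strong maximum principle then forces $v\equiv M$, which translates back to the statement that for each fixed $R>0$, $\sup_{B_R(z_k)}|u-M|\to 0$ as $k\to\infty$; a diagonal procedure then yields radii $R_k\to\infty$ with $\sup_{B_{R_k}(z_k)}|u-M|\to 0$.

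The remaining task is to convert this local information into a contradiction using the maximality of $u$ together with the Lipschitz bound. The plan is to construct, for $k$ large and a suitable radius $\rho$ comparable to $|z_k|$, a continuous $(n-1)$-PSH competitor $\phi$ on $B_\rho(z_k)$ satisfying $\phi\le u$ on $\de B_\rho(z_k)$ but $\phi(y)>u(y)$ at some interior point $y$ where $u$ is bounded away from $M$ (for example near the origin, where $u(0)<M$ is strict). The barrier $\phi$ would be built from $|z-z_k|^2$, which is strictly $(n-1)$-PSH since $P(|z|^2)=\beta$, suitably combined with constants determined by the boundary values of $u$; the Lipschitz bound controls the oscillation of $u$ on $\de B_\rho(z_k)$ and hence determines how the coefficients of the barrier must be chosen.

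The main obstacle is making this comparison work quantitatively in the absence of a fully developed pluripotential theory for the $(n-1)$-PSH Monge-Amp\`ere operator, which is precisely what prevents one from simply invoking existence of solutions to an auxiliary Dirichlet problem. The argument would therefore proceed through the smooth mollifications $u_\varepsilon:=u*\chi_\varepsilon$ (which are smooth $(n-1)$-PSH by convolution and satisfy uniform Lipschitz and $L^\infty$ bounds), carrying out the competitor construction and the comparison at the smooth level, and then passing to the limit $\varepsilon\to 0$, taking care that the competitor inequality $\phi>u$ at the interior point is preserved in the limit so that maximality of $u$ is genuinely contradicted.
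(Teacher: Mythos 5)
Your preliminary steps are fine but soft: the strong maximum principle for $(n-1)$-PSH functions (via subharmonicity, or via restriction to hyperplanes) and the compactness of translates $u(\cdot+z_k)$ only use boundedness, the Lipschitz bound and subharmonicity, and they deliver essentially the same qualitative information as \eqref{lim} in the paper (that $u$ is close to its supremum on large balls around suitable points $z_k\to\infty$, necessarily with $R_k$ much smaller than $|z_k|$ since $u(0)<M$). The genuine gap is the final step, which is where all the content of the theorem lies and which you only describe as a ``plan.'' The competitor you propose cannot work as described: a barrier of the form $c+\ve|z-z_k|^2$ is plurisubharmonic, hence dominated at any interior point by values forced on it by the requirement $\phi\le u$ on $\de B_\rho(z_k)$, and you have no lower bound whatsoever for $u$ on that sphere (indeed the part of $\de B_\rho(z_k)$ passing near the origin carries values of $u$ close to $u(0)<M$, and on the rest of the sphere $u$ may be as small as $\inf u$); the function $-|z-z_k|^2$ is not $(n-1)$-PSH, so it cannot be used to make the barrier dip where needed; and the Lipschitz bound gives oscillation control of order $L\rho\sim L|z_k|\to\infty$ on such a sphere, which is no control at all. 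Nothing in your outline uses the gradient bound quantitatively, and nothing distinguishes $n\ge 3$ from the plurisubharmonic case, so the argument as sketched would ``prove'' statements that require the specific structure of the problem.

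What is missing is precisely the engine of the paper's proof (following Dinew--Ko{\l}odziej): the dichotomy given by property $(*)$, whose failure yields the uniform lower bound \eqref{key} on the averages $\left[|\de u/\de w|^2\right]_r$ in every direction $w$ on the open set $U$; the identity $P(u^2)\ge 2|\nabla u|^2\beta-2\mn\de u\wedge\db u$, which converts \eqref{key} into strict positivity $P([u^2]_r)\ge c'\beta$, so that the competitor $[u^2]_r+[u]_\rho+\gamma-\frac{4}{3}$ (with $\gamma=-\frac{c'}{2}|z|^2$) is built from $u$ itself, is $(n-1)$-PSH, agrees with $2u$ on $\de U_\gamma$ and exceeds it inside, contradicting maximality; and, when $(*)$ holds, an induction on dimension requiring that maximality passes to locally uniform limits (Lemma \ref{converg}) and to slices independent of $z^n$ (Lemma \ref{maximal}, which crucially uses the non-plurisubharmonic $(n-1)$-PSH function $|z^1|^2+\cdots+|z^{n-1}|^2-|z^n|^2$, $n\ge 3$), together with the smooth comparison principle of Lemma \ref{comparison}. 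Without a mechanism of this kind producing a competitor whose values genuinely exceed $u$ at an interior point while lying below $u$ on a controlled boundary, the maximality hypothesis is never brought to bear, and your outline does not constitute a proof.
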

This is completely analogous to the Liouville theorem of Dinew-Ko\l odziej \cite{DK} in the context of complex Hessian equations.
Recall that Lipschitz continuous functions are differentiable a.e., so the bound $\sup_{\mathbb{C}^n}|\nabla u|<\infty$ is meant as an essential supremum, or equivalently as a bound on the global Lipschitz constant of $u$.

We begin with a comparison principle result for $(n-1)$-PSH functions.

\begin{lemma}\label{comparison}
Let $\Omega\subset\mathbb{C}^n$ be a bounded domain and $u,v: \ov{\Omega} \rightarrow \mathbb{R}$ continuous functions, smooth on $\Omega$, with $P(u)\geq 0, P(v)\geq 0$, such that $P(v)^n\geq P(u)^n$ on $\Omega$ and $v\leq u$ on $\de\Omega$. Then $v\leq u$ on $\Omega$.
\end{lemma}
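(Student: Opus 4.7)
The plan is to argue by contradiction via a standard perturbation of $v$. Suppose $\max_{\Omega}(v-u)>0$. Let $R=\max_{\ov{\Omega}}|z|$ and set
$$w_\ve := v + \ve(|z|^2 - R^2), \qquad \ve>0.$$
Since $|z|^2-R^2\le 0$ on $\ov{\Omega}$, we have $w_\ve\leq v\leq u$ on $\de\Omega$, while for $\ve$ sufficiently small $\max_{\ov{\Omega}}(w_\ve - u)$ is still positive and is therefore attained at some interior point $x_0\in\Omega$. A direct computation gives $\Delta|z|^2=n$ and $\ddbar|z|^2=\beta$, so $P(|z|^2)=\beta$, and by linearity of $P$,
$$P(w_\ve) = P(v) + \ve\,\beta.$$

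The linear map $M\mapsto \frac{1}{n-1}\bigl((\textrm{tr}\,M)I-M\bigr)$ sends a Hermitian matrix with eigenvalues $\mu_1,\dots,\mu_n$ to one with eigenvalues $\frac{1}{n-1}\sum_{k\neq i}\mu_k$, and in particular it is monotone: $M_1\le M_2$ as Hermitian matrices implies $P(M_1)\le P(M_2)$. At the interior maximum $x_0$ the complex Hessian $(w_\ve-u)_{i\ov{j}}(x_0)$ is negative semidefinite (restrict $w_\ve-u$ to each complex line through $x_0$ and use that the $2$-dimensional Laplacian at a local maximum is non-positive). Applying $P$,
$$P(w_\ve)(x_0) \leq P(u)(x_0)$$
as Hermitian matrices. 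Both sides are positive semidefinite, so the elementary inequality $0\le A\le B \Rightarrow \det A\le \det B$ yields $P(w_\ve)^n \leq P(u)^n$ as $(n,n)$-forms at $x_0$.

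To derive the contradiction, choose a unitary basis diagonalizing $P(v)$ at $x_0$. If its eigenvalues are $\nu_i\geq 0$, then $P(w_\ve)=P(v)+\ve\beta$ has eigenvalues $\nu_i+\ve$, so
$$\frac{P(w_\ve)^n}{\beta^n} = \prod_i (\nu_i+\ve) \geq \prod_i \nu_i + \ve^n = \frac{P(v)^n}{\beta^n} + \ve^n \geq \frac{P(u)^n}{\beta^n} + \ve^n,$$
using the hypothesis $P(v)^n\geq P(u)^n$. This strictly exceeds $P(u)^n/\beta^n$, contradicting the previous inequality. Hence $w_\ve\leq u$ on $\Omega$ for all small $\ve>0$, and letting $\ve\to 0$ yields $v\leq u$ on $\Omega$. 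The only point requiring real attention is the monotonicity of the matrix operator $P$; everything else is a routine application of the pointwise maximum principle, made rigorous by the strict perturbation $P(w_\ve)=P(v)+\ve\beta$ which produces the contradiction.
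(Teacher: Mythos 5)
Your proof is correct, and it takes a somewhat different route from the paper's. Both arguments use the identical perturbation $v_\ve = v + \ve(|z|^2 - \sup_{\de\Omega}|w|^2)$, which makes $P(v_\ve) \ge \ve\beta > 0$, but they differ in how the pointwise contradiction is extracted. The paper linearizes: it writes $P(u)^n - P(v_\ve)^n = \int_0^1 \frac{d}{dt}P(tu+(1-t)v_\ve)^n\,dt = A^{i\ov{j}}\de_i\de_{\ov{j}}(u-v_\ve)$ with $A^{i\ov{j}}$ positive definite (the averaged linearization along the segment), and then invokes the minimum principle for this linear elliptic operator. You instead argue directly at an interior maximum of $v_\ve - u$, using three structural facts: monotonicity of the linear map $M\mapsto\frac{1}{n-1}((\mathrm{tr}\,M)I - M)$ on the cone of nonnegative Hermitian matrices (eigenvalues $\frac{1}{n-1}\sum_{k\neq i}\mu_k$), monotonicity of the determinant under the ordering $0\le A\le B$, and the elementary bound $\prod_i(\nu_i+\ve)\ge\prod_i\nu_i+\ve^n$ which quantifies the strictness gained from the perturbation. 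Your route is more elementary and self-contained — no linearization or ellipticity of the averaged coefficient matrix is needed — at the cost of exploiting the specific form of $P$ and of the determinant; the paper's linearization argument is the standard template for comparison principles for general concave fully nonlinear operators and would carry over with no change to other operators of that type. Each step you use is correctly justified (in particular the negative semidefiniteness of the complex Hessian of $v_\ve - u$ at the interior maximum, obtained by restricting to complex lines), so the argument is complete.
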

\begin{proof}
For $\ve>0$ let
$$v_\ve(z)=v(z)+\ve(|z|^2-\sup_{w\in\de\Omega}|w|^2),$$
so that $P(v_\ve)=P(v)+\ve\beta>0$ and $P(v_\ve)^n>P(u)^n$ in $\Omega$, and $v_\ve\leq u$ on $\de\Omega$.
Then on $\Omega$ we have
\[\begin{split}
0>P(u)^n-P(v_\ve)^n&=\int_0^1 \frac{d}{dt} P(tu+(1-t)v_\ve)^n dt\\
&=\int_0^1 n P(tu+(1-t)v_\ve)^{n-1}\wedge P(u-v_\ve)dt\\
&=A^{i\ov{j}}\frac{\de^2}{\de z^i \de\ov{z}^j} (u-v_\ve),
\end{split}\]
where $A^{i\ov{j}}\frac{\de^2}{\de z^i \de\ov{z}^j}$ is obtained by taking the linearization  of
$P(u)^n$ at $tu+(1-t)v_\ve$ and integrating on $0\leq t\leq 1$, and the Hermitian matrix $A^{i\ov{j}}(z)$ is positive definite everywhere on $\Omega$.
This inequality implies that the minimum of $u-v_\ve$ on $\ov{\Omega}$ is achieved on $\de\Omega$, hence $v_\ve\leq u$ on $\Omega$. Letting $\ve\to 0$
gives the desired conclusion.
\end{proof}
Assuming Theorem \ref{liouv}, we can now prove Theorem \ref{grad}:
\begin{proof}[Proof of Theorem \ref{grad}]
Assume for a contradiction that \eqref{gradest} does not hold. Then there exist a sequence of smooth functions $F_j$ with $\|F_j\|_{C^2(M,g)}\leq C$,
and smooth functions $u_j$ with $\sup_M u_j=0$ and
$$\ti{\omega}_j:=\omega_h+\frac{1}{n-1}((\Delta_g u_j)\omega-\ddbar u_j)>0,$$
solving the equations
$$\ti{\omega}_j^n=e^{F_j}\omega^n, \quad \textrm{with }
C_j:=\sup_M |\nabla u_j|_g\to\infty.$$
From Theorem \ref{theoremC0} we know that $\sup_M|u_j|\leq C$.
For each $j$ let $x_j\in M$ be a point where the maximum of $|\nabla u_j|_g$ is achieved. Up to passing to a subsequence, we can assume that
$x_j\to x\in M$. Fix a coordinate chart centered at $x$, which we identify with the ball $B_2(0)$ of radius 2 centered at the origin in $\mathbb{C}^n$ with coordinates $(z^1,\dots, z^n)$,
and such that $\omega(x)=\beta$, the identity.
From now on assume that $j$ is sufficiently large so that the points $x_j$ are contained in $B_1(0)$. We define, on the ball $B_{C_j}(0)$ in $\mathbb{C}^n$,
$$\hat{u}_j(z)=u_j(C_j^{-1}z + x_j).$$
The function $\hat{u}_j$ satisfies
$$\sup_{B_{C_j}(0)}|\hat{u}_j|\leq C, \quad \textrm{and }
\sup_{B_{C_j}(0)}|\nabla\hat{u}_j|\leq C,$$
where here the gradient is the Euclidean gradient. Furthermore,
$$|\nabla \hat{u}_j|(0)=C_j^{-1}|\nabla u_j|_g(x_j)=1.$$
Thanks to Theorem \ref{theoremC2}, we also have
that
\begin{equation}\label{c2e}
\sup_{B_{C_j}(0)} |\ddbar \hat{u}_j |_{\beta}\leq CC_j^{-2}\sup_M  |\ddbar u_j |_{\omega}\leq C'.
\end{equation}
Using the elliptic estimates for $\Delta$ and the Sobolev embedding, we see that for each given $K\subset\mathbb{C}^n$ compact, each $0<\alpha<1$ and $p>1$, there is a constant $C$ such
that $$\|\hat{u}_j\|_{C^{1,\alpha}(K)}+\|\hat{u}_j\|_{W^{2,p}(K)}\leq C.$$
Therefore  a subsequence of $\hat{u}_j$ converges strongly in $C^{1,\alpha}_{\mathrm{loc}}(\mathbb{C}^n)$ as well
as weakly in $W^{2,p}_{\mathrm{loc}}(\mathbb{C}^n)$ to a function $u\in W^{2,p}_{\mathrm{loc}}(\mathbb{C}^n)$ with
$\sup_{\mathbb{C}^n} (|u|+|\nabla u|)\leq C$ and $\nabla u(0)\neq 0$ (in particular, $u$ is nonconstant).

Call now $\Phi_j:\mathbb{C}^n\to\mathbb{C}^n$ the map given by $\Phi_j(z)=C_j^{-1}z + x_j$, so that $\hat{u}_j=u_j\circ \Phi_j$ on $B_{C_j}(0)$.
The K\"ahler form $\omega$ on the chart near $x$ satisfies that
\begin{equation}\label{resc}
C_j^2\Phi_j^*\omega\to \beta,
\end{equation}
smoothly on compact sets of $\mathbb{C}^n$. In particular, $\Phi_j^*\omega\to 0$ smoothly. Similarly, $\Phi_j^*\omega_h\to 0$. For ease of notation, define $\beta_j=C_j^2\Phi_j^*\omega$. Then,
$$\Phi_j^*((\Delta_g u_j)\omega)=\Phi_j^*(C_j^{-2}(\Delta_g u_j) \cdot C_j^2\omega)=(\Delta_{\beta_j} \hat{u}_j) \beta_j
\to (\Delta u)\beta,$$
weakly in $L^p_{\mathrm{loc}}(\mathbb{C}^n)$ of their coefficients. In particular,
$$\Phi_j^*\ti{\omega}_j=\Phi_j^*\left(\omega_h+\frac{1}{n-1}((\Delta_g u_j)\omega-\ddbar u_j)\right)\to P(u),$$
weakly as currents, and since $\Phi_j^*\ti{\omega}_j>0$, it follows that $P(u)\geq 0$ as currents.
The functions $F_j\circ \Phi_j$ are uniformly bounded, and therefore
\begin{equation}\label{vol}
\Phi_j^*\ti{\omega}_j^n=e^{F_j\circ \Phi_j}\Phi_j^*\omega^n\to 0,
\end{equation}
uniformly on compact sets of $\mathbb{C}^n$. From \eqref{c2e} and \eqref{resc} we see that given any $K\subset \mathbb{C}^n$ compact, we have
$$\sup_{K}|\Delta_{\beta_j}\hat{u}_j-\Delta_{\beta}\hat{u}_j|\to 0,$$
as $j\to\infty$. We also have that
\begin{equation} \label{supKbd}
\sup_K |P(\hat{u}_j)-\Phi_j^*\ti{\omega}_j|_{\beta}\leq \sup_K\bigg( \frac{|(\Delta_{\beta_j}\hat{u}_j)\beta_j-(\Delta_\beta \hat{u}_j)\beta|_\beta}{n-1}
+|\Phi_j^*\omega_h|_{\beta}\bigg)
\end{equation}
 converges to zero as $j\to\infty$.
Since $P(\hat{u}_j)$ and $\Phi_j^*\ti{\omega}_j$ are locally uniformly bounded, this together with \eqref{vol} implies that $P(\hat{u}_j)^n$ converges to zero uniformly on compact sets of $\mathbb{C}^n$.

We now use this to conclude that $u$ is maximal. To see this, let $\Omega$ be a bounded domain in $\mathbb{C}^n$ and $v$ a continuous $(n-1)$-PSH function on a larger bounded domain $\Omega'\Supset\Omega$, with $v\leq u$ on $\de\Omega$. Recall that $\hat{u}_j$ converge to $u$ in $C^{1,\alpha}_{\mathrm{loc}}(\mathbb{C}^n)$. Let $\chi_\delta$ be a family of mollifiers, and let $v_\delta=v*\chi_\delta$. Then for $\delta$ small, $v_\delta$ is smooth and $(n-1)$-PSH on $\Omega$, and $v_\delta$ converges to $v$ uniformly on $\ov{\Omega}$ as $\delta\to 0$. Given $\ve>0$ choose $\delta>0$ such that
$v_\delta \leq u+\ve$ on $\de\Omega$, and $\delta\to 0$ as $\ve \to 0$.
Choose $j$ sufficiently large so that $v_\delta\leq \hat{u}_j+2\ve$ on $\de\Omega$.
Let
$$\ti{v}_\ve=v_\delta+\ve|z|^2-\ve C,$$
where $C$ is large enough so that on $\de\Omega$ we have $\ti{v}_\ve\leq v_\delta-2\ve\leq \hat{u}_j$ for all $j$ large.
On $\Omega$ we have
$P(\ti{v}_\ve)=P(v_\delta)+\ve\beta>0,$ and $P(\ti{v}_\ve)^n\geq \ve^n \beta^n$.  From (\ref{supKbd}), we have on $\Omega'$ that $P(\hat{u}_j + \delta_j |z|^2) \ge 0$ where $\delta_j \rightarrow 0$ as $j \rightarrow \infty$.  Moreover, on $\Omega$,  $(P(\hat{u}_j+ \delta_j |z|^2))^n\leq \delta'_j \beta^n$ for $\delta'_j \rightarrow 0$. Therefore we have that $P(\ti{v}_\ve)^n \geq (P(\hat{u}_j+ \delta_j |z|^2))^n$ holds on $\Omega$ for all $j$ large. Since $\ti{v}_\ve$ and $\hat{u}_j+\delta_j |z|^2$ are all smooth, Lemma \ref{comparison} then gives $\ti{v}_\ve\leq\hat{u}_j+\delta_j |z|^2$ on $\Omega$. Letting $j\to\infty$ and then $\ve,\delta\to 0$ shows that $v\leq u$ on $\Omega$ as needed.

Therefore $u$ is maximal, and it satisfies all the hypotheses of the Liouville Theorem \ref{liouv}. We conclude that $u$ is constant, which is a contradiction to $\nabla u(0)\neq 0$.
\end{proof}

Before we prove Theorem \ref{liouv}, we will need two more lemmas.

\begin{lemma}\label{converg}
Let $u_j$ be continuous $(n-1)$-PSH functions on $\mathbb{C}^n$ which are all maximal. Assume that $u_j\to u$ locally uniformly in $\mathbb{C}^n$.
Then $u$ is also continuous, $(n-1)$-PSH and maximal.
\end{lemma}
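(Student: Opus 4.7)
The three conclusions about $u$ are of increasing difficulty; I will attack them in that order.

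\textbf{Continuity and $(n-1)$-PSH property.} The first is immediate: a locally uniform limit of continuous functions is continuous. For the second, local uniform convergence implies $u_j\to u$ in $L^1_{\mathrm{loc}}(\mathbb{C}^n)$, hence $\ddbar u_j\to\ddbar u$ and $(\Delta u_j)\beta\to(\Delta u)\beta$ as currents. Since $P$ is linear, $P(u_j)\to P(u)$ weakly, and weak limits of positive $(1,1)$-currents are positive, so $P(u)\geq 0$.

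\textbf{Maximality.} This is the only real content of the lemma. Let $\Omega'\Subset\Omega''\Subset\mathbb{C}^n$ and let $v$ be a continuous $(n-1)$-PSH function on $\Omega''$ with $v\leq u$ on $\de\Omega'$. I need to show $v\leq u$ on $\Omega'$. Fix $\ve>0$. Since $\overline{\Omega'}$ is compact and $u_j\to u$ locally uniformly, there exists $j_0$ such that $|u_j-u|\leq \ve$ on $\overline{\Omega'}$ for all $j\geq j_0$. In particular, on $\de\Omega'$,
\[
v-\ve \;\leq\; u-\ve \;\leq\; u_j.
\]
The function $v-\ve$ is continuous and $(n-1)$-PSH on $\Omega''$ (constants don't affect $P$), so by the assumed maximality of $u_j$ applied to the pair $(\Omega',\Omega'')$ inside $\mathbb{C}^n$, we conclude $v-\ve\leq u_j$ on $\Omega'$. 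Letting $j\to\infty$ gives $v-\ve\leq u$ on $\Omega'$, and then letting $\ve\to 0$ gives $v\leq u$ on $\Omega'$, which is exactly maximality of $u$.

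\textbf{Where the difficulty lies.} There is essentially no analytic obstacle here; the plan is just the standard stability-of-maximality argument one uses for plurisubharmonic functions, and it works verbatim in the $(n-1)$-PSH setting because the definition of maximality given in the excerpt only requires the competitor $v$ to be continuous $(n-1)$-PSH (so we may freely shift it by a constant) and because we compare $v$ to $u_j$ only on the boundary of the smaller domain. The sole point to be careful about is that the competitor $v-\ve$ be a legitimate competitor for the maximality of $u_j$, which it is, so no regularization of $v$ is needed at this stage. This contrasts with the proof of Theorem \ref{grad}, where one had to mollify $v$ in order to invoke Lemma \ref{comparison}, which demands smooth functions.
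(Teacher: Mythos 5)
Your proposal is correct and follows essentially the same route as the paper: the paper's proof also fixes $\ve>0$, uses locally uniform convergence to get $v-\ve\leq u_j$ on the boundary, invokes maximality of $u_j$, and then lets $j\to\infty$ and $\ve\to 0$ (the continuity and $(n-1)$-PSH assertions being dismissed there as immediate, exactly as you justify them). Your additional remark that no mollification of the competitor is needed is accurate and consistent with the paper's argument.
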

\begin{proof}
Clearly $u$ is continuous and $(n-1)$-PSH. Let $\Omega$ be a bounded domain in $\mathbb{C}^n$, and $v$ be a continuous $(n-1)$-PSH function on a bounded domain
$\Omega'\Supset\Omega$ with $v\leq u$ on $\de\Omega$, and fix $\ve>0$. Since $u_j\to u$ locally uniformly, for all $j$ large we have $v-\ve\leq u_j$ on $\de\Omega$.
By maximality of $u_j$, we get $v-\ve\leq u_j$ on $\Omega$. Letting $j\to\infty$ and then $\ve\to 0$ shows that $v\leq u$ on $\Omega$ as needed.
\end{proof}

\begin{lemma}\label{maximal}
Let $u$ be a continuous $(n-1)$-PSH function on $\mathbb{C}^n, n\geq 3$. Assume that $u$ is maximal and independent of $z^n$. Then
$u(z^1,\dots,z^{n-1})=u(z^1,\dots,z^{n-1},0)$ is $(n-2)$-PSH and maximal in $\mathbb{C}^{n-1}$.
\end{lemma}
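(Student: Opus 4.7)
The plan has two independent parts.

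In the first, I will show that $u(\cdot, 0)$ is $(n-2)$-PSH in $\mathbb{C}^{n-1}$ by an approximation argument. I would mollify $u$ by convolution in the $\mathbb{C}^{n-1}$-directions only: let $u_\epsilon = u * \chi_\epsilon$ where $\chi_\epsilon$ is a smooth mollifier supported in a small ball in $\mathbb{C}^{n-1}$ (not $\mathbb{C}^n$). Then $u_\epsilon$ is smooth, $(n-1)$-PSH on $\mathbb{C}^n$, still independent of $z^n$, and converges locally uniformly to $u$. Its complex Hessian as an $n \times n$ Hermitian matrix has vanishing $n$-th row and column, so its $n$ eigenvalues are $\mu_1 \le \cdots \le \mu_{n-1}$ (the eigenvalues of the Hessian of $u_\epsilon$ viewed on $\mathbb{C}^{n-1}$) together with a single $0$. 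The $(n-1)$-PSH condition asserts that the sum of any $n-1$ of these $n$ eigenvalues is nonnegative; applied to the sum that includes the $0$ and excludes $\mu_{n-1}$, this yields $\mu_1 + \cdots + \mu_{n-2} \ge 0$, which is precisely the $(n-2)$-PSH condition on $\mathbb{C}^{n-1}$. Passing to the limit $\epsilon \to 0$ via the Harvey--Lawson characterization through subharmonicity on $(n-2)$-dimensional affine complex subspaces gives the claim.

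In the second, I will argue the maximality. Given $\Omega \Subset \mathbb{C}^{n-1}$ and a continuous $(n-2)$-PSH function $v$ on some $\Omega'' \Supset \Omega$ with $v \le u$ on $\partial \Omega$, I would set $\tilde v(z', z^n) := v(z')$ on $\Omega'' \times \mathbb{C}$. An eigenvalue argument analogous to the first part---using in addition that a $(n-2)$-PSH function on $\mathbb{C}^{n-1}$ satisfies $\sum_i \mu_i \ge 0$, since $(n-2)$-PSH forces $\mu_{n-2}, \mu_{n-1} \ge 0$---shows that $\tilde v$ is continuous $(n-1)$-PSH on $\Omega'' \times \mathbb{C}$. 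I would then apply the maximality of $u$ on $\mathbb{C}^n$ to the cylindrical subdomain $\Omega^* = \Omega \times B_R$. On the ``horizontal'' piece of $\partial \Omega^*$, namely $\partial \Omega \times \overline{B_R}$, we have $\tilde v = v \le u$ by hypothesis. If the comparison held on all of $\partial \Omega^*$, the maximality of $u$ would yield $\tilde v \le u$ on $\Omega^*$ and hence the desired $v \le u$ on $\Omega$.

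The main obstacle is the ``vertical'' piece $\overline \Omega \times \partial B_R$ of the boundary: since both $\tilde v$ and $u$ are independent of $z^n$, the comparison there is equivalent to $v \le u$ on $\overline \Omega$, which is precisely what we are trying to prove. To overcome this I would need to modify $\tilde v$ by an $(n-1)$-PSH correction that enforces strict inequality on the vertical boundary while preserving a putative violation at $(z_0', 0)$. Straightforward corrections of the form $\tilde v + \phi(z^n)$ fail, since the required subharmonicity of $\phi$ in $z^n$ (to preserve the $(n-1)$-PSH condition) runs in the wrong direction via the minimum principle: the complementary superharmonic candidate cannot be arranged to be large at the origin and small on $\partial B_R$ simultaneously. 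The resolution therefore requires a correction depending nontrivially on both $z'$ and $z^n$, via an envelope or Perron-type argument analogous to the approach used by Dinew--Ko\l{}odziej \cite{DK} for complex Hessian equations; this is the main nontrivial step of the proof.
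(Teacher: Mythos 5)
Your first part is correct (and, modulo the mollification details, amounts to the paper's one-line remark that independence of $z^n$ makes the restriction $(n-2)$-PSH). The maximality argument, however, stops exactly at the step that is the entire content of the paper's proof, so the proposal has a genuine gap: you never produce the barrier that handles the vertical part of the cylinder boundary, and you assert that doing so ``requires an envelope or Perron-type argument.'' It does not. The missing object is the explicit quadratic $q(z)=|z^1|^2+\cdots+|z^{n-1}|^2-|z^n|^2$: its complex Hessian has eigenvalues $1,\dots,1,-1$, so every sum of $n-1$ of them is at least $n-3\ge 0$, i.e.\ $P(q)\ge 0$ for $n\ge 3$, although $q$ is not plurisubharmonic. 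Since $P$ is linear, $v_\ve(z)=v(z')+\ve\, q(z)-\ve R^2$ is still $(n-1)$-PSH whenever the constant extension of $v$ is (here $\Omega\subset B_R(0)\subset\mathbb{C}^{n-1}$). Thus your observation that a correction $\phi(z^n)$ must be subharmonic in $z^n$ is only an obstruction to corrections depending on $z^n$ alone: the whole point of the hypothesis $n\ge 3$ (and the reason the lemma fails for plurisubharmonic functions) is that a negative eigenvalue in the $z^n$-direction can be absorbed by positive eigenvalues in the $z'$-directions.

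With this barrier the comparison closes immediately, provided you also let the height of the cylinder be a free parameter rather than the fixed $R$. On $\partial\Omega\times\{|z^n|\le C_\ve\}$ one has $q-R^2\le -|z^n|^2\le 0$, so $v_\ve\le v\le u$; on $\ov{\Omega}\times\{|z^n|=C_\ve\}$ one has $v_\ve\le \sup_\Omega v-\ve C_\ve^2\le u$ once $C_\ve$ is chosen large depending on $\ve$, $\sup_\Omega v$ and $\sup_\Omega |u|$ (this is where the $-\ve|z^n|^2$ decay is used, and why the height must grow as $\ve\to 0$). Maximality of $u$ on $\Omega\times\{|z^n|<C_\ve\}$ then gives $v(z')+\ve(|z'|^2-R^2)\le u(z')$ at $z^n=0$, and letting $\ve\to 0$ yields $v\le u$ on $\Omega$. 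So the resolution is an elementary explicit barrier, not an envelope or Perron construction; without it, your proposed proof of maximality is incomplete.
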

\begin{proof}
Since $u$ does not depend on $z^n$, it is immediate that $u$ is $(n-2)$-PSH as a function on $\mathbb{C}^{n-1}$.

Let $\Omega$ then be a bounded domain in $\mathbb{C}^{n-1}$, contained in the ball $B_R(0) \subset \mathbb{C}^{n-1}$ for some $R>0$.  Suppose that $v$ is a continuous $(n-2)$-PSH function on a bounded domain $\Omega'\Supset\Omega$ with $v \le u$ on $\partial \Omega$.  We need to show that $v\le u$ on $\Omega$.

Fix $\ve>0$.  Let $\ti{\Omega} = \Omega \times \{ |z^n| < C_{\ve} \} \subset \mathbb{C}^n$ for $C_{\ve}$ a constant to be determined.  Note that $\ti{\Omega}$ is a bounded domain in $\mathbb{C}^n$ with boundary $$ \partial \ti{\Omega} = (\partial \Omega \times \{ |z^n | \le C_{\ve} \} ) \cup ( \ov{\Omega} \times \{ |z^n| = C_{\ve} \}).$$
Define
$$v_{\ve}(z^1, \ldots, z^n) = v(z^1, \ldots, z^{n-1}) + \ve(|z^1|^2+ \cdots + |z^{n-1}|^2 - |z^n|^2) - R^2 \ve,$$
which is defined on $\Omega'\times \mathbb{C}\Supset \ti{\Omega}.$
Then $P(v_{\ve}) \ge 0$.  Now on $\partial \Omega \times \{ |z^n| \le C_{\ve}\}$ we have
$$v_{\ve}(z^1, \ldots, z^n) \le v(z^1, \ldots, z^{n-1}) \le u(z^1, \ldots, z^{n-1}).$$
And on $\ov{\Omega} \times \{ |z^n|= C_{\ve} \}$ we have
\[
\begin{split}
v_{\ve} (z^1, \ldots, z^n) = {} &v(z^1, \ldots, z^{n-1}) + \ve (|z^1|^2+ \cdots + |z^{n-1}|^2 - C_{\ve}^2) - R^2\ve \\
\le {} & \sup_{\Omega} v + \ve R^2 - \ve C_{\ve}^2 - R^2 \ve.
\end{split}
\]
Now choose $C_{\ve}$ large enough, depending on  $\ve$, $\sup_\Omega v$ and $\sup_{\Omega}|u|$ to get
$$v_{\ve} \le u \quad \textrm{on } \ov{\Omega} \times \{ |z^n| =C_{\ve}\}.$$
Combining the above, we get $v_{\ve} \le u$ on $\partial \ti{\Omega}$ and so by the maximality of $u$ on $\mathbb{C}^n$ we have $v_{\ve} \le u$ on $\ti{\Omega}$.  Hence on $\Omega$ we have
$$v(z^1, \ldots, z^{n-1}) + \ve(|z^1|^2+ \cdots + |z^{n-1}|^2) - R^2 \ve \le u(z^1, \ldots, z^{n-1}).$$
Letting $\ve \rightarrow 0$ gives $v \le u$ on $\Omega$ and we are done.
\end{proof}

Note that Lemma \ref{maximal} is \emph{false} for plurisubharmonic functions, and a crucial point in our proof is that the function on $\mathbb{C}^n$,
$$z \mapsto |z^1|^2+ \cdots + |z^{n-1}|^2-|z^n|^2,$$
which is not plurisubharmonic, is $(n-1)$-PSH for $n \ge 3$.

\begin{proof}[Proof of Theorem \ref{liouv}]
The rest of the proof follows closely the argument of Dinew-Ko\l odziej \cite{DK}. We will also use their notation. For $u$ a function in $\mathbb{C}^n$,
$z\in\mathbb{C}^n$ and $r>0$, we let
$$[u]_r(z)=\frac{1}{|B_r|}\int_{B_r(z)}u \beta^n,$$
be the average of $u$ on the ball $B_r(z)$. Similarly, if $\alpha=\sum_{I,J}\alpha_{I\ov{J}} dz^I\wedge d\ov{z}^J$ is a $(p,q)$ form (here $I,J$ are multi-indices with $|I|=p, |J|=q$), we define
$$[\alpha]_r(z)=\sum_{I,J}[\alpha_{I,J}]_r(z) dz^I\wedge d\ov{z}^J,$$
which is another $(p,q)$ form. We have that
$$\left(\frac{\de}{\de z}[u]_r\right)(z)= \left[\frac{\de u}{\de z}\right]_r(z), \ \de [\alpha]_r = [\de\alpha]_r \ \textrm{and } [P(u)]_r=P([u]_r),$$
for any function $u$ and $(p,q)$ form $\alpha$.

We prove the Liouville theorem by induction on $n\geq 2$. When $n=2$, a function with $P(u)\geq 0$ is plurisubharmonic, and it is well-known that every bounded plurisubharmonic function on $\mathbb{C}^n$ is constant. So we can assume that the result holds in dimension $n-1$ and prove it in dimension $n$.
For a contradiction, we assume that our maximal function $u$ is nonconstant, and we normalize it so that $\inf_{\mathbb{C}^n}u=0$ and $\sup_{\mathbb{C}^n}u=1$.
We noted earlier that every $(n-1)$-PSH function is in particular subharmonic, and therefore $u$ and $u^2$ are subharmonic and bounded on $\mathbb{C}^n$.
As discussed in \cite{DK}, the Harnack inequality (or Cartan's lemma) implies that for every $z\in\mathbb{C}^n$ we have
\begin{equation}\label{lim}
\lim_{r\to\infty} [u]_r(z)=\lim_{r\to\infty} [u^2]_r(z)=1.
\end{equation}
Consider the following property $(*)$: there exist a radius $\rho>0$, a sequence of maps $G_k:\mathbb{C}^n\to\mathbb{C}^n$ of the form
$G_k(z)=H_kz+\lambda_k$ with $H_k\in U(n)$ and $\lambda_k\in\mathbb{C}^n$, and a sequence of radii $r_k\to\infty$ such that
\begin{equation}\label{cond1}
[u^2\circ G_k]_{r_k}(0)+[u\circ G_k]_{\rho}(0)-2u\circ G_k(0)\geq\frac{4}{3},
\end{equation}
and
\begin{equation}\label{cond2}
\lim_{k\to\infty} \int_{B_{r_k}(0)}\left|\frac{\de(u\circ G_k)}{\de z^n}\right|^2\beta^n=0.
\end{equation}
We will show that both $(*)$ and its negation lead to a contradiction. Assume first that $(*)$ holds. Let $u_k=u\circ G_k$ which are still defined on the whole of $\mathbb{C}^n$ and are Lipschitz and maximal. Then we have
$$\sup_{\mathbb{C}^n}|u_k|=\sup_{\mathbb{C}^n}|u|\leq C,\quad \sup_{\mathbb{C}^n}|\nabla u_k|=\sup_{\mathbb{C}^n}|\nabla u|\leq C,$$
and by Ascoli-Arzel\`a and a diagonalization argument, a subsequence of $u_k$ converges locally uniformly to a continuous function $v$ on $\mathbb{C}^n$, which is clearly $(n-1)$-PSH and Lipschitz with
$\sup_{\mathbb{C}^n}|\nabla v|\leq C$. Lemma \ref{converg} shows that $v$ is maximal.

Exactly the same argument as in \cite{DK}, which uses \eqref{cond2}, shows that $v$ is independent of $z^n$, and Lemma \ref{maximal} shows that $v$ is maximal in $\mathbb{C}^{n-1}$. By induction hypothesis, $v$ is constant.
But going back to \eqref{cond1} we can use that $[u^2\circ G_k]_{r_k}(0)\leq 1$ to see that
$$[v]_\rho(0)-2v(0)\geq \frac{1}{3}>0,$$
which is absurd since $v$ is a constant between 0 and 1.

We can then assume that the negation of $(*)$ is true, i.e. there is a constant $c>0$ such that for all $\rho>0$, all $G=H+\lambda,$ ($H\in U(n), \lambda\in\mathbb{C}^n$), and all $r$ large, either
$$[u^2\circ G]_{r}(0)+[u\circ G]_{\rho}(0)-2u\circ G(0)<\frac{4}{3},$$
or
$$\int_{B_{r}(0)}\left|\frac{\de(u\circ G)}{\de z^n}\right|^2\beta^n\geq c>0.$$
In particular, given any point $z\in\mathbb{C}^n$ and unit vector $w\in\mathbb{C}^n$, we can choose $G$ such that $G(0)=z$ and $G$ maps $\frac{\de}{\de z^n}$ to $w$. Therefore
there exist $c>0$ and $R>0$ such that if $r>R$, $z\in\mathbb{C}^n$ and $\rho>0$ satisfy
$$[u^2]_r(z)+[u]_\rho(z)-2u(z)\geq\frac{4}{3},$$
then we have that
\begin{equation}\label{key}
\left[ \left|\frac{\de u}{\de w}\right|^2\right]_r(z) \geq c\cdot c_nr^{-2n}>0,
\end{equation}
for all unit vectors $w$.

Shift the origin so that $u(0)<\frac{1}{12}$, and then pick $\rho$ and $r>R$ large so that $[u]_\rho(0)>\frac{3}{4}$ and $[u^2]_r(0)>\frac{3}{4}$, which is possible thanks
to \eqref{lim}.
Then
$$[u^2]_r(0)+[u]_\rho(0)-2u(0)>\frac{3}{4}+\frac{3}{4}-\frac{1}{6}=\frac{4}{3}.$$
Define an open set
$$U=\bigg\{2u<[u^2]_r+[u]_\rho -\frac{4}{3}\bigg\},$$
which is nonempty since $0\in U$, and for every $z\in U$ we have
that \eqref{key} holds for all unit vectors $w$.
Compute
$$P(u^2)=2uP(u)+2|\nabla u|^2\beta-2\mn\de u\wedge\db u\geq 2|\nabla u|^2\beta-2\mn\de u\wedge\db u,$$
as currents.
We claim that there is a constant $c'>0$ such that if $z\in U$ then
$$\left[|\nabla u|^2\beta-\mn\de u\wedge\db u\right]_r(z)\geq c'\beta.$$
To see this, write $|\nabla u|^2\beta-\mn\de u\wedge\db u=\mn\alpha_{i\ov{j}} dz^i\wedge d\ov{z}^j$, so that
$$\alpha_{i\ov{i}}=\sum_{j\neq i}\left|\frac{\de u}{\de z^j}\right|^2,$$
at a.e. point (where $u$ is differentiable).
We need a lower bound for $[\alpha_{i\ov{j}}]_r(z) \mn dz^i\wedge d\ov{z}^j$. We can choose the coordinates $(z^1,\dots,z^n)$ so that
the Hermitian matrix $[\alpha_{i\ov{j}}]_r(z)$ is diagonal, with eigenvalues
$$[\alpha_{i\ov{i}}]_r(z)=\frac{1}{|B_r|}\int_{B_r(z)}\alpha_{i\ov{i}}\, \beta^n=\frac{1}{|B_r|}\sum_{j\neq i}
\int_{B_r(z)}\left|\frac{\de u}{\de z^j}\right|^2 \beta^n=\sum_{j\neq i} \left[ \left|\frac{\de u}{\de z^j}\right|^2\right]_r(z),$$
which are all bigger than $c'=c'(r)>0$ thanks to \eqref{key}. This proves the claim, and so we get
$$P([u^2]_r)\geq c'\beta.$$
Define $\gamma(z) = - \frac{c'}{2} |z|^2$ which has the property that $P(\gamma) = -\frac{c'}{2} \beta$.
Then the function $[u^2]_r+\gamma$ is continuous and $(n-1)$-PSH on $U$. Consider the open set $U_{\gamma} \subset U$ given by
$$U_{\gamma}=\bigg\{2u<[u^2]_r+[u]_\rho+\gamma -\frac{4}{3}\bigg\}.$$
Then $0\in U_{\gamma}$, $U_{\gamma}$ is bounded since $u$ is bounded and $\gamma \to -\infty$ as $|z|\to\infty$, and in fact $U_{\gamma}\Subset U$.
The functions $[u^2]_r+[u]_\rho+\gamma -\frac{4}{3}$ and $2u$ are continuous and $(n-1)$-PSH on $U$, and they are equal on $\de U_{\gamma}$.
By maximality of $2u$ we conclude that $2u\geq [u^2]_r+[u]_\rho+\gamma -\frac{4}{3}$ inside $U_{\gamma}$, which is absurd. This contradiction ends the proof of the theorem.
\end{proof}

\section{Proof of the Main Theorem} \label{sectionproof}

To complete the proof of Theorem \ref{maintheorem}, we need to establish higher order estimates and set up a suitable continuity method argument.  Using the estimates obtained in the previous sections, it is now straightforward to establish $C^{\infty}$ a priori estimates for $u$ solving (\ref{MA1}).  In fact, by the correspondence between equations (\ref{MA1}) and (\ref{FWW}), this result is already contained in the work of Fu-Wang-Wu \cite{FWW}.  Nevertheless, we include a short proof for the sake of completeness.

\begin{theorem} \label{theoremhigher}
In the setting of Theorem \ref{maintheorem}, let $u$ solve (\ref{MA1}) subject to (\ref{condthm}).  Then for each $k=0,1,2, \ldots$ there exists a positive constant $C_k$ depending only on $k$ and the fixed data $(M, g, h)$ and bounds for $F$ such that
$$\| u \|_{C^k(M,g)} \le C_k,$$
and $$\tilde{g} \ge \frac{1}{C_0} g,$$
for $\tilde{g}$ given by (\ref{tildeg1}).
\end{theorem}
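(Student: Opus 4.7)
The plan is to bootstrap from the zero, first, and second order estimates already obtained in Sections \ref{linfty}--\ref{foe}. Combining Theorems \ref{theoremC0}, \ref{theoremC2}, and \ref{grad}, we have uniform bounds on $\|u\|_{L^\infty}$, $|\nabla u|_g$, and hence on $\tr{\omega}{\tilde\omega}$. Together with the equation $\det(\tilde g)/\det(g)=e^{F+b}$ (with $b$ already uniformly bounded), this forces the eigenvalues $\lambda_1\leq\cdots\leq\lambda_n$ of $\tilde g$ with respect to $g$ to satisfy $\sum_i\lambda_i\leq C$ and $\prod_i\lambda_i\geq c>0$, whence each $\lambda_i$ is pinched between two positive constants. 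This yields the claimed bound $\tilde g\geq\frac{1}{C_0}g$, and from \eqref{ddbu0} it follows that $\ddbar u$ is uniformly bounded, i.e.\ $u\in C^{1,1}$ with uniform constants.

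Next I would apply the Evans--Krylov theorem to upgrade this to a $C^{2,\alpha}$ estimate. Writing the equation as
\[
\log\det\left(h_{i\ov{j}}+\tfrac{1}{n-1}\bigl((\Delta u)g_{i\ov{j}}-u_{i\ov{j}}\bigr)\right)=F+b+\log\det g_{i\ov{j}},
\]
the map sending the complex Hessian $u_{i\ov{j}}$ to $\tilde g_{i\ov{j}}$ is affine, and by the step above $\tilde g$ stays uniformly inside the cone of positive Hermitian matrices. Since $\log\det$ is concave on that cone, the composition is concave in the Hessian, so the complex Evans--Krylov theorem applies locally in charts and gives a uniform interior $C^{2,\alpha}$ estimate for $u$.

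Finally, I would bootstrap by Schauder theory. Differentiating the equation covariantly once, any first derivative $\partial_k u$ satisfies a linear elliptic equation whose coefficients are essentially the components of the operator $L$ from Section \ref{soe} (positive definite by the $C^{1,1}$ estimate) and whose right-hand side is $\partial_k F$ plus terms involving one derivative of $u$ and the fixed metrics; the $C^{2,\alpha}$ bound puts these coefficients in $C^{\alpha}$. The linear Schauder estimates then give $u\in C^{3,\alpha}$, and iterating this argument (differentiating further and using the already obtained bound at each stage) yields $\|u\|_{C^k}\le C_k$ for every $k$.

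The only substantive point is the Evans--Krylov step, as the rest is essentially standard once $u\in C^{1,1}$. That step requires verifying that the operator is genuinely concave in $u_{i\ov j}$ and that $\tilde g$ is uniformly positive definite; both follow from what was established in the previous sections, so no new estimate is needed. All of this is already contained in \cite{FWW}, and our presentation merely reorganizes it in the framework of equation \eqref{MA1}.
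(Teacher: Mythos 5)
Your proposal is correct and takes essentially the same route as the paper: uniform equivalence $C^{-1}g\le \tilde g\le Cg$ from the trace bound plus the determinant equation, the complex Evans--Krylov theorem applied to the concave operator $\log\det$ composed with the affine map $u_{i\ov{j}}\mapsto \tilde g_{i\ov{j}}$, and then a standard Schauder bootstrap for all higher $C^k$ norms. The only difference is that where you invoke complex Evans--Krylov as a black box, the paper carries out that step explicitly -- deriving the inequality $\Theta^{i\ov{j}}\partial_i\partial_{\ov{j}}u_{\gamma\ov{\gamma}}\ge G$ by using the positivity of the quadratic third-order term to absorb the errors from converting covariant to partial derivatives, and freezing the varying K\"ahler metric $g$ (which enters the operator through $(\Delta u)g_{i\ov{j}}-u_{i\ov{j}}$) at a point to handle the $x$-dependence via concavity -- but these are exactly the standard verifications behind the theorem you cite, so no essential idea is missing.
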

\begin{proof}
Combining the results of Theorems \ref{theoremC2} and \ref{grad}, we have the following estimates:
$$\sup_M |u| + \sup_M | \partial u|_g + \sup_M | \partial \ov{\partial} u |_g \le C,$$
and from the equation (\ref{MA1}), the uniform upper bound on $\tr{g}{\tilde{g}}$ gives $$C^{-1} g \le \tilde{g} \le C g.$$

By the standard linear elliptic theory, it suffices to obtain a $C^{2+\alpha}(M,g)$ bound for $u$ for some $\alpha>0$.
 We  now apply the usual Evans-Krylov method, adapted to the complex setting (see \cite{Si, Tr}).  Here we will follow the proof given in \cite{TW1} for the complex Monge-Amp\`ere equation on Hermitian manifolds.
 We will describe how that proof goes through here with some  minor modifications.
Recall that our equation is
$$\log \det \tilde{g} = \tilde{F}.$$
As in \cite{TW1}, we will work in a small open subset of $\mathbb{C}^n$, containing a ball $B_{2R}$ of radius $2R$.
Let $\gamma=(\gamma^i)$ be an arbitrary vector in $\mathbb{C}^n$.  From (\ref{F112}), but replacing the derivatives in the $\frac{\partial}{\partial z^1}$ direction with derivatives in the $\gamma^i \partial/\partial z^i$ direction, we obtain
\begin{equation} \label{ek1}
\Theta^{i\ov{j}} \nabla_{\ov{j}} \nabla_i u_{\gamma \ov{\gamma}} \ge G,
\end{equation}
for $G$ a uniformly bounded function.  Recall that $\Theta^{i\ov{j}}$ is defined by
$$\Theta^{i\ov{j}} = \frac{1}{n-1} \left( (\tr{\tilde{g}}{g}) g^{i\ov{j}} - \tilde{g}^{i\ov{j}} \right).$$
Of course the metrics $\tilde{g}_{i\ov{j}}$ and $\Theta^{i\ov{j}}$ are uniformly bounded and equivalent to a Euclidean metric on $B_{2R}$.
  Observe that we are working here with \emph{covariant derivatives} with respect to $g$, and not partial derivatives as in \cite{TW1}.

 For the next step, we will encounter a minor complication arising from the fact that the operator
  $$u \mapsto (\Delta u) g_{i\ov{j}} - u_{i\ov{j}}$$
  depends on the metric $g$, which is varying on $B_{2R}$.  For this reason we introduce the fixed metric $\hat{g}_{i\ov{j}}$ on $B_{2R}$ to be the positive definite Hermitian matrix
  $$\hat{g}_{i\ov{j}} = g_{i\ov{j}}(0).$$
 Regarding $\hat{g}$ as a K\"ahler metric on $B_{2R}$, we have, by the Mean Value Inequality, the bounds
 \begin{equation} \label{hatgg}
 | \hat{g}_{i\ov{j}}(x) - g_{i\ov{j}}(x) | \le CR, \quad \textrm{for } x \in B_{2R}.
 \end{equation}
 We then define
 $$\hat{\Theta}^{i\ov{j}} = \frac{1}{n-1} \left( (\tr{\tilde{g}}{\hat{g}}) \hat{g}^{i\ov{j}} - \tilde{g}^{i\ov{j}} \right).$$
 Now the concavity of the operator $\Phi(A) = \log \det A$ (for $A$ a positive definite Hermitian matrix) implies that, for $x,y \in B_{2R}$,
 $$\Phi(\tilde{g}(y)) + \sum_{i,j} \frac{\partial \Phi}{\partial a_{i\ov{j}}} (\tilde{g}(y)) \left( \tilde{g}_{i\ov{j}}(x) - \tilde{g}_{i\ov{j}}(y) \right) \ge \Phi(\tilde{g}(x)).$$
Hence
 $$\sum_{i,j} \tilde{g}^{i\ov{j}}(y) \left( \tilde{g}_{i\ov{j}}(y) - \tilde{g}_{i\ov{j}}(x) \right) \le \ti{F}(y)-\ti{F}(x) \le CR.$$
But this implies that
$$  \frac{1}{n-1} \sum_{i,j} \tilde{g}^{i\ov{j}}(y) \left( ( (\Delta u) g_{i\ov{j}} - u_{i\ov{j}})(y) - ( (\Delta u) g_{i\ov{j}} - u_{i\ov{j}})(x) \right)  \le C'R.$$
From (\ref{hatgg}) and the fact that the $u_{i\ov{j}}$ and $\tilde{g}^{i\ov{j}}$ are uniformly bounded, we obtain
$$ \frac{1}{n-1} \sum_{i,j} \tilde{g}^{i\ov{j}}(y) \left( ( (\hat{\Delta} u) \hat{g}_{i\ov{j}} - u_{i\ov{j}})(y) - ( (\hat{\Delta} u) \hat{g}_{i\ov{j}} - u_{i\ov{j}})(x) \right)  \le C'R,$$
where we are writing $\hat{\Delta}=\sum_{i,j} \hat{g}^{i\ov{j}} \partial_i \partial_{\ov{j}}$.  It follows that
\begin{equation} \label{ek2}
\sum_{i,j} \hat{\Theta}^{i\ov{j}} (y) \left( u_{i\ov{j}}(y) - u_{i\ov{j}}(x) \right) \le C'R.
\end{equation}
Given (\ref{ek1}) and (\ref{ek2}), we can apply the arguments of \cite{TW1} in exactly the same way.  The only difference is that in (\ref{ek1}), we are using covariant derivatives instead of partial derivatives.  However, the difference between the operators $\Theta^{i\ov{j}} \partial_{\ov{j}} \partial_i$ and $\Theta^{i\ov{j}} \nabla_{\ov{j}} \nabla_i$ are some first and zero order terms with bounded coefficients. We can still apply Theorem 9.22 in \cite{GT} to obtain the result of Lemma 4.1 in \cite{TW1}.  The result follows.
\end{proof}

It remains to set up a continuity method and establish ``openness'', and prove the uniqueness of the solution.  Again these already follow from the results of Fu-Wang-Wu \cite{FWW} in the case of the equation (\ref{FWW}).   We include here our own proofs which use directly the equation (\ref{MA1}).

\begin{proof}[Proof of Theorem \ref{maintheorem}]
Given a smooth function $F$ we will find a constant $b$ and a function $u$  such that
\begin{equation} \label{MA2}
\left(\omega_h+ \frac{1}{n-1} \left( (\Delta u) \omega - \ddbar u\right)\right)^n = e^{F+b} \omega_h^n,
\end{equation}
and
\begin{equation} \label{MAC2}
\omega_h+ \frac{1}{n-1} \left( (\Delta u) \omega - \ddbar u\right) >0, \quad \sup_M u=0,
\end{equation}
for a Hermitian metric $\omega_h$ and a K\"ahler metric $\omega$.  Note that this equation differs slightly from (\ref{MA1}) since we have replaced $\omega^n$ on the right hand side by $\omega_h^n$.  However, we are free to make this change as it corresponds to adding a smooth bounded function to $F$.

 By the  higher order estimates, it suffices to find $u \in C^{2+\alpha}$ for some $\alpha$ with $0<\alpha<1$, which we now fix.
As in \cite{TW1} we consider a family of equations for $u_t$, $b_t$,
\begin{equation} \label{family}
\left(\omega_h+ \frac{1}{n-1} \left( (\Delta u_t) \omega - \ddbar u_t \right)\right)^n = e^{tF+b_t} \omega_h^n,
\end{equation}
with
\begin{equation} \label{MAC2t}
\omega_h+ \frac{1}{n-1} \left( (\Delta u_t) \omega - \ddbar u_t\right) >0, \quad \sup_M u_t=0.
\end{equation}
Consider the set
$$ T =\left\{ t' \in [0,1] \ \bigg| \  \begin{array}{l} \textrm{there exists $u_t \in C^{2+\alpha}(M)$ and $b_t$} \\ \textrm{solving (\ref{family}), (\ref{MAC2t}) for $t\in [0,t']$ } \end{array} \right\}.$$
Note that $0\in T$.  We wish to show that $T$ is open. Assume that $\hat{t} \in T$.
We write
$$\hat{\omega} = \omega_h + \frac{1}{n-1}( (\Delta u_{\hat{t}}) \omega - \ddbar u_{\hat{t}}).$$
It suffices to show that, for some small $\ve>0$, there exists $v_t \in C^{2 + \alpha}(M)$  for $t \in [\hat{t}, \hat{t}+\ve)$ with $v_{\hat{t}}=0$ and
$$\left(\hat{\omega} + \frac{1}{n-1} ((\Delta v_t) \omega - \ddbar v_t)\right)^n = e^{(t-\hat{t}\, )F + b_t -b_{\hat{t}}} \hat{\omega}^n,$$
and
$$\hat{\omega} + \frac{1}{n-1} \left( (\Delta v_t) \omega - \ddbar v_t\right) >0,$$
for $b_t$ a function of $t$. Indeed, if we can find such a $v_t$ then $u_t = u_{\hat{t}} + v_t$ solves (\ref{family}) (and by adding  a time-dependent constant, we can also arrange that $\sup_M u_t=0$). Now define
\begin{equation*}
\hat{\Theta}^{i\ov{j}} = \frac{1}{n-1} ((\tr{\hat{g}}{g}) g^{i\ov{j}} - \hat{g}^{i\ov{j}}).
\end{equation*}
By Gauduchon's theorem \cite{Ga}, there exists a smooth function $\sigma$ such that
\begin{equation} \label{defnsigma}
e^{\sigma} \hat{\Theta}^{i\ov{j}} \hat{\omega}^n = g_G^{i\ov{j}} \omega_G^n,
\end{equation}
where $\omega_G= \sqrt{-1} (g_G)_{i\ov{j}} dz^i \wedge d\ov{z}^j$ is a Gauduchon metric.  Indeed, writing $\hat{\Theta}_{i\ov{j}}$ for the Hermitian metric which is the inverse of $\hat{\Theta}^{i\ov{j}}$, there exists a smooth function $\sigma'$ with $(g_{G})_{i\ov{j}} = e^{\sigma'} \hat{\Theta}_{i\ov{j}}$ a Gauduchon metric, thanks to \cite{Ga}.  Then set $\sigma = - \sigma' + \log (\omega_G^n/\hat{\omega}^n)$.
 By adding a constant to $\sigma$, we may and do assume that
$\int_M e^{\sigma} \hat{\omega}^n=1$.

We will show that we can find $v_t$ for $t \in [\hat{t}, \hat{t}+\ve)$ such that
\[
\begin{split}
\lefteqn{
\left(\hat{\omega} + \frac{1}{n-1} ( (\Delta v_t)\omega- \ddbar v_t) \right)^n} \\ = {} & \left( \int_M e^{\sigma} (\hat{\omega} + \frac{1}{n-1}((\Delta v_t) \omega - \ddbar v_t))^n \right) e^{(t-\hat{t}\, )F + c_t} \hat{\omega}^n,
\end{split}
\]
where $c_t$ is the normalization constant given by
$$\int_M e^{(t-\hat{t}\, ) F+c_t} e^{\sigma} \hat{\omega}^n =1.$$
Define $B_1$, $B_2$ by
$$B_1 = \{ v \in C^{2+\alpha}(M) \ | \ \int_M v e^{\sigma} \hat{\omega}^n =0, \ \hat{\omega} + \frac{1}{n-1} ((\Delta v) \omega - \ddbar v) >0 \}$$
and
$$B_2 = \{ w \in C^{\alpha}(M) \ | \ \int_M e^w e^{\sigma} \hat{\omega}^n =1 \}.$$
Define $\Psi: B_1 \rightarrow B_2$ by
\[
\begin{split}
\lefteqn{\Psi(v) = \log \frac{ (\hat{\omega} + \frac{1}{n-1} ((\Delta v)\omega - \ddbar v))^n}{\hat{\omega}^n}  } \qquad \\ & - \log \left( \int_M e^{\sigma} (\hat{\omega} + \frac{1}{n-1}((\Delta v)\omega - \ddbar v))^n \right).
\end{split}
\]
Our goal is to find $v_t$ solving $\Psi(v_t) = (t-\hat{t}\, )F + c_t$ for $t \in [\hat{t}, \hat{t}+\ve)$.  We have $\Psi(0)=0$ and so by the Inverse Function Theorem, it suffices to show the invertibility of
$$(D\Psi)_0:  T_0 B_1\rightarrow T_0 B_2,$$
where
$$T_0 B_1= \{ \zeta\in C^{2+\alpha}(M) \ | \ \int_M \zeta e^{\sigma} \hat{\omega}^n =0 \},$$
and
$$T_0 B_2 = \{ \rho\in C^{\alpha}(M) \ | \ \int_M \rho e^{\sigma} \hat{\omega}^n=0 \},$$ denote the tangent spaces to $B_1, B_2$ at 0.
We have
$$(D\Psi)_0 (\zeta) = \hat{\Theta}^{i\ov{j}} \zeta_{i\ov{j}} - \int_M e^{\sigma} \hat{\Theta}^{i\ov{j}} \zeta_{i\ov{j}} \hat{\omega}^n= \hat{\Theta}^{i\ov{j}} \zeta_{i\ov{j}},$$
since from (\ref{defnsigma}),
$$ \int_M e^{\sigma} \hat{\Theta}^{i\ov{j}} \zeta_{i\ov{j}} \hat{\omega}^n = \int_M \Delta_{\omega_G} \zeta \, \omega_G^n=0.$$
This operator is clearly injective. To show surjectivity, take $\rho \in T_0B_2$.  Then let $\zeta$ solve
$$\Delta_{\omega_G} \zeta = \rho e^{\sigma} \frac{\hat{\omega}^n}{\omega_G^n}, \quad \int_M \zeta e^{\sigma} \hat{\omega}^n=0.$$
We can find this $\zeta$ since the integral of $\rho e^{\sigma} \hat{\omega}^n/\omega_G^n$ with respect to $\omega_G^n$ is zero, and $\omega_G$ is Gauduchon (and furthermore, the
$C^{2+\alpha}$ norm of $\zeta$ is bounded by the $C^{\alpha}$ norm of $\rho$).
But  this means that
$$\frac{\omega_G^n}{\hat{\omega}^n} e^{-\sigma} \Delta_{\omega_G} \zeta = \rho$$
and hence by (\ref{defnsigma}),
$$\hat{\Theta}^{i\ov{j}} \zeta_{i\ov{j}} = \rho,$$
which is exactly what we needed to show.

This establishes the openness of the set $T$.  For the closedness, we need a priori estimates for $u_t$ and $b_t$.  Bounds on $b_t$ follow immediately from the maximum principle as discussed in the beginning of Section \ref{linfty}, and then the estimates for $u_t$ follow from Theorem \ref{theoremhigher}.  Hence $T$ is open and closed and so equal to $[0,1]$.  The solution at $t=1$ gives us a solution to (\ref{MA2}).

Uniqueness of the solution $(u,b)$ to (\ref{MA1}) follows from the arguments of \cite{FWW}.  Indeed, suppose that $(u,b)$ and $(u',b')$ are two solutions of (\ref{MA1}).  Write
$$\tilde{\omega} = \omega_h + \frac{1}{n-1}( (\Delta u) \omega - \ddbar u)$$
and
$$\tilde{\omega}' = \omega_h + \frac{1}{n-1}( (\Delta u') \omega - \ddbar u').$$
Then
$$\frac{(\tilde{\omega} + \frac{1}{n-1} ((\Delta (u'-u)) \omega - \ddbar (u'-u)))^n}{ \tilde{\omega}^n} = e^{b'-b},$$
and by considering the points at which $u'-u$ achieves a maximum and minimum, we obtain $b=b'$.  To see that $u=u'$, observe that $\tilde{\omega}^n = (\tilde{\omega}')^n$ and hence
$$ \left( \Delta (u-u') - \ddbar (u-u') \right) \wedge \sum_{i=0}^{n-1} \tilde{\omega}^i \wedge (\tilde{\omega}')^{n-1-i}=0.$$
Since $\sup_M u=\sup_M u'=0$, it follows that $u-u'=0$ by the strong maximum principle.  This completes the proof of Theorem \ref{maintheorem}.
\end{proof}

\end{document}